\newcommand{\Z}{\mathbb{Z}}
\newtheorem{theorem}{Theorem}[section]
\newtheorem{lemma}[theorem]{Lemma}
\newtheorem{proposition}[theorem]{Proposition}
\newtheorem{corollary}[theorem]{Corollary}
\newtheorem{remark}[theorem]{Remark}
\DeclareMathOperator{\Image}{Im}
\DeclareMathOperator{\Ker}{Ker}
\DeclareMathOperator{\sdc}{SDC}
\DeclareMathOperator{\res}{res}
\DeclareMathOperator{\infl}{inf}
\DeclareMathOperator{\tr}{tr}
\DeclareMathOperator{\Aut}{Aut}
\DeclareMathOperator{\Out}{Out}
\begin{document}
\title[Seven-term exact sequence]{A seven-term exact sequence for the cohomology of a group extension}

\author[K. Dekimpe]{Karel Dekimpe}
\address{Katholieke Universiteit Leuven\\
Campus Kortrijk\\
8500 Kortrijk\\
Belgium}
\email{karel.dekimpe@kuleuven-kortrijk.be}
\email{sarah.wauters@kuleuven-kortrijk.be}

\author[M. Hartl]{Manfred Hartl}
\author[S. Wauters]{Sarah Wauters}
\address{Universit\'{e} de Valenciennes\\
  Valenciennes \\
  France}
\email{manfred.hartl@univ-valenciennes.fr}
\date{\today}
\thanks{S.\ Wauters is supported by a Ph.~D.~fellowship of the Research Foundation - Flanders (FWO)}
\thanks{Research supported by the Research Fund K.U.Leuven}

\begin{abstract}
In this paper we construct a seven-term exact sequence involving the cohomology groups of a group extension. Although the existence 
of such a sequence can be derived using spectral sequence arguments, there is little knowledge about some of the maps occuring in the 
sequence, limiting its usefulness. Here we present a construction using only very elementary tools, always related to 
the notion of conjugation in a group. This results in a complete and usable description of all the maps, which we describe
both on cocycle level as on the level of the interpretations of low dimensional cohomology groups (e.g.~group extensions).
\end{abstract}

\maketitle

\section{Introduction}
A classical tool to study the cohomology of groups fitting in a group extension 
\[\xymatrix{1 \ar[r] & N \ar[r] & G \ar[r] & Q \ar[r] & 1}\] 
is the Lyndon-Hochschild-Serre spectral sequence, which relates the cohomology of $G$ to the cohomology of the kernel $N$ and the quotient $Q$. 
For example, as observed in \cite{sah74-1}, for any $G$--module $M$, the Lyndon-Hochschild-Serre spectral sequence gives rise to an exact sequence 
\[\xymatrix{0 \ar[r] & H^1(Q,M^N) \ar[r]^-{\infl} & H^1(G,M) \ar[r]^-{\res} & H^1(N,M)^Q \ar[r]^-{\tr} & H^2(Q,M^N) }\]
\[\xymatrix{\mbox{} \ar[r]^-{\infl} & H^2(G,M)_1 \ar[r]^-{\rho} & H^1(Q,H^1(N,M)) \ar[r]^-{\lambda} & H^3(Q,M^N)},\] 
where $ H^2(G,M)_1 $ is the kernel of the restriction map $\res : H^2(G,M) \rightarrow H^2(N,M)$. The inflation and the restriction maps in the sequence are well understood, but the others are induced by differentials 
in the spectral sequence and there is no explicit description available, except from Huebschmann's description of $\lambda$, see below. At least, using cocycle manipulations, two different ad-hoc constructions of a map $\xymatrix{\tr:H^1(N,M)^Q \ar[r] & H^2(Q,M^N})$  were given in \cite{guic80-1} and  \cite{rous06-1} which render the left-hand 5-term sequence exact.

In this paper, we give an alternative, purely group theoretic construction of connecting maps $\tr$ and $\rho$ as above fitting into this sequence, taking the last map $\lambda$ to be the map constructed by  Huebschmann in \cite{hueb81-2}. Just as for the latter, our constructions are
based on mostly well-known  interpretations of the low dimensional cohomology groups, such as derivations, semi-direct complements, extensions and crossed modules. As a striking fact, our maps $\tr$ and $\rho$ are extracted from nothing but the conjugation action of group extensions of $G$ by $M$ which are split at least  over $N$, on the set of semi-direct complements of $N$: the map $\tr$ is basically given by the isotropy groups of this action, while the map $\rho$ more precisely encodes this action itself, modulo conjugation by elements of $M$. It is interesting to observe that Huebschmann's map $\lambda$ can be viewed as a broadening of the context from conjugation to  ``conjugation-like'' automorphisms.

We also give an explicit description of the three maps $\tr,\, \rho$, and $\lambda$ on the cocycle level. In particular, it turns out that the two constructions of a suitable map $\tr$ in \cite{guic80-1} and  \cite{rous06-1} both are just different descriptions of  our map $\tr$.

In general, we do not know whether or not the maps in our sequence coincides with the ones obtained from the spectral sequence. However, when $M$ is $N$-invariant, we know that $\tr$ equals the boundary map $d^{0,1}_2:H^1(N,M)^Q \rightarrow H^2(Q,M^N)$ (see lemma \ref{lem_evens_ding}). Furthermore, 
Huebschmann showed in \cite{hueb81-2} that his map $\lambda$ is the same as the one induced by the spectral sequence. 

We start by recalling two important techniques in our construction, namely the pull-back and a kind of push-out construction. In what follows, we will find a more appropriate way to regard $H^1(G,M)$, namely using semi-direct complements and/or splittings, instead of derivations (section \ref{der_sec_sdc}). To avoid complicating things, and to gain optimal insight in the matter, we use some notions of category theory, which are briefly overviewed in section \ref{hom_point_view}. We are then ready to construct the different maps in the sequence (section \ref{first_map}, \ref{second_map} and \ref{third_map}). It will be clear that the seven-term exact sequence is natural with respect to the modules, but we will still have to show that it is natural with respect to the group extension. We do this in section \ref{naturality_sequence}. In section \ref{Evens_dinges}, we show that, at least under some conditions, $\tr$ equals $d^{0,1}_2$. An explicit cocycle description of the maps $\tr, \, \rho$ and $\lambda$ is given in section \ref{cocycle_description}. Finally, we demonstrate our results for the Heisenberg groups in section \ref{Heisenberg}. We finish with some observations concerning split group extensions (section \ref{split_extension}). 

\section{Pull-back and push-out constructions}\label{pb_po}
The pull-back and the push-out are two key notions from category theory. The reason that they are also important to us, is that we can use them to describe induced maps on the extension level. 

Given two group morphisms $p_1 : E \rightarrow H$ and $p_2 : G \rightarrow H$, we can take the group $P$ consisting of all couples $(e,g) \in E \times G$ such that $p_1(e)=p_2(g)$, with the group law inherited from the direct product. There are natural maps $q_1 : P \rightarrow E$ and $q_2 : P \rightarrow G$, which are restrictions of the projection maps. We say that $(P, q_1, q_2)$ is the \emph{pull-back} of $p_1$ and $p_2$. When the morphisms are clear from the context, we sometimes just say that $P$ is the pull-back.
The universal property of the pull-back says that if there are two group morphisms $\alpha_1 : E' \rightarrow E$ and $\alpha_2 : E' \rightarrow G$ such that $p_1 \circ \alpha_1 = p_2 \circ \alpha_2$, there exists a unique group morphism $\alpha : E' \rightarrow P$ such that $q_1 \circ \alpha = \alpha_1$ and $q_2 \circ \alpha= \alpha_2$.
\[\xymatrix{E' \ar[rrd]^{\alpha_2} \ar@{.>}[rd]^{\alpha} \ar[rdd]_{\alpha_1} & & \\& P \ar[r]^{q_2} \ar[d]^{q_1} & G \ar[d]^{p_2}\\
& E \ar[r]^{p_1} & H}\]
If there is another group $P'$ and morphisms $q'_1 : P' \rightarrow E$ and $q'_2 : P' \rightarrow G$ such that $p_1 \circ q'_1=p_2 \circ q'_2$, having the universal property, it is not difficult to check that $P \cong P'$. In fact, the universal property can be used to define the pull-back, as is done in category theory. 

Now suppose we have a short exact sequence of groups 
\[\xymatrix{0 \ar[r] & M \ar[r]^{i_1} & E \ar[r]^{p_1} & H \ar[r]& 1}\] with abelian kernel,  and a group morphism $p_2 : G \rightarrow H$. Taking the pull-back $(P,q_1,q_2)$ of $p_1$ and $p_2$, we can find a map $i_2 : M \rightarrow P$ such that we obtain a commutative diagram with exact rows
\[\xymatrix{\underline{e'}: 0 \ar[r] & M \ar@{=}[d] \ar[r]^{i_2} & P \ar[d]^{q_1} \ar[r]^{q_2} & G \ar[d]^{p_2}\ar[r] & 1 \\
\underline{e}: 0 \ar[r] & M \ar[r]^{i_1} & E \ar[r]^{p_1} & H \ar[r] & 1.}\]

One can check that $[\underline{e'}]$ in the diagram is the image of $[\underline{e}]$ under the induced map $(p_2)^* : H^2(H,M) \rightarrow H^2(G,M)$, where $M$ is a $G$-module via $p_2$.

The push-out construction we want to use in this paper is the same as the one given by C.C.~Cheng and Y.C.~Wu (\cite{cw81-1}). 
Take a short exact sequence of groups with abelian kernel \[\xymatrix{0 \ar[r] & M_1 \ar[r]^{i_1} & E_1 \ar[r]^{p_1} & G \ar[r]& 1},\] a $G$-module $M_2$ and a $G$-module morphism $i_2:M_1 \rightarrow M_2$. We want to describe the map $(i_2)_* : H^2(G,M_1) \rightarrow H^2(G,M_2)$ on the level of extensions. 

Throughout this paper we will denote the action of an element $g\in G$ on an element $m$ of a $G$-module $M$ by $g\cdot m$.
There is an $E_1$-module structure on $M_2$ induced by $p_1$, i.e. $^{e_1}m_2=p_1(e_1) \cdot m_2$, so we can consider the semi-direct product $M_2\rtimes E_1$.
Set $E = (M_2 \rtimes E_1) / S$, where $S$ is the normal subgroup of $M_2 \rtimes E_1$ consisting of the elements of the form $(-i_2(m_1), i_1(m_1))$ for $m_1 \in M_1$. There are maps $j_1: E_1 \rightarrow E$ and $j_2 : M_2 \rightarrow E$, defined by taking the composition of the respective embeddings in $M_2 \rtimes E_1$ and the quotient map $M_2 \rtimes E_1 \rightarrow E$. We call $(E,j_1,j_2)$ the \emph{push-out construction} of the maps $i_1$ and $i_2$. Sometimes we omit the maps and say that $E$ is the push-out construction of $i_1$ and $i_2$, if it is clear what $j_1$ and $j_2$ are.
 Observe that $j_1 \circ i_1 = j_2 \circ i_2$.

There is a ``universal property'' of the push-out construction. Let $E'$ be a group and let $\rho_1 : E_1 \rightarrow E'$ and $\rho_2 : M_2 \rightarrow E'$ be group homomorphisms such that $\rho_1 \circ i_1=\rho_2 \circ i_2$. There exists a homomorphism $h: E \rightarrow E'$ with $h \circ j_1=\rho_1$ and $h \circ j_2=\rho_2$, if and only if 
\begin{equation}\label{voorwaarde_uitbreiding}\rho_2(e_1 \cdot m_2)=\rho_1(e_1) \rho_2(m_2) \rho_1(e_1)^{-1}.\end{equation} 
In this case, $h$ is unique. 
\[\xymatrix{M_1 \ar[d]_{i_2} \ar[r]^{i_1} & E_1 \ar[ddr]^{\rho_1} \ar[d]_{j_1}&\\ 
						M_2 \ar[drr]_{\rho_2} \ar[r]^{j_2} & E \ar@{.>}[dr]^{h \ \ \ \ }\\
						    &   & E'}\]
Observe that we can give an easier description of condition (\ref{voorwaarde_uitbreiding}), defining an action of $E_1$ on $E'$ by conjugation, i.e. ${}^{e_1} x=\rho_1(e_1) x \rho_1(e_1)^{-1}$ for $x \in E'$ and $e_1 \in E_1$. Now we can replace condition (\ref{voorwaarde_uitbreiding}) by demanding that $\rho_2$ is compatible with the action of $E_1$. 

If we take $\rho_1=j_1$ and $\rho_2=j_2$, equation (\ref{voorwaarde_uitbreiding}) holds. Moreover, the universal property determines $E$ up to isomorphism.

Take maps $p_1: E_1 \rightarrow G$ and $1: M_2 \rightarrow G$, the trivial map, and observe that we can find can a map $p : E \rightarrow G$ such that $p \circ j_1=p_1$ and $p \circ j_2=1$. 
Thus, there is a commutative diagram
\[\xymatrix{0 \ar[r] & M_1 \ar[d]^{i_2}\ar[r]^{i_1} & E_1 \ar[d]^{j_1} \ar[r]^{p_1} & G \ar[r]& 1\\0 \ar[r] & M_2 \ar[r]^{j_2} & E \ar[r]^p & \ar@{=}[u] G \ar[r]& 1.}\] The lower sequence is automatically exact. One can check that the image of the class of the upper sequence under the natural map $H^2(G,M_1) \rightarrow H^2(G,M_2)$, induced by $i_2$, can be represented by the lower sequence. 

\section{Derivations, splittings and semi-direct complements}\label{der_sec_sdc}

Let $G$ be a group and $M$ a $G$-module. It will be useful for the construction of the exact sequence to have different descriptions of the first cohomology group $H^1(G,M)$. Consider the standard split extension of $G$ by $M$
\[\xymatrix{0 \ar[r] & M \ar[r]^{i_0 \ \ \ } & M \rtimes G \ar[r]^{\ \ \ p_0} & G \ar[r] & 1}.\] It is well-known that $H^1(G,M)$ is isomorphic to the group $\mbox{Der}(G,M)/\mbox{Inn} (G,M)$, where $\mbox{Der}(G,M)$ is the group of derivations $d : G \rightarrow M$, and $\mbox{Inn} (G,M)$ are the inner derivations. We can associate to each derivation $d$ a splitting $s : G \rightarrow M \rtimes G$, $s(g)=(d(g),g)$. In this way, we get a bijection between $\mbox{Der}(G,M)/\mbox{Inn} (G,M)$ and $\mbox{Sec}(G,M) / \sim_M$, where $\mbox{Sec}(G,M)$ is the set of splittings of the split exact sequence. The equivalence relation is the following: $s_1 \sim_M s_2$ if there exists $m \in M$ such that $s_1=i_0(m)s_2 i_0(m)^{-1}$. These interpretations of $H^1(G,M)$ are well-known, and appear in several textbooks on cohomology of groups, for example in \cite{brow82-1}. 

For our purposes, there is a more convenient way to look at $H^1(G,M)$. Define the set of semi-direct complements $\mbox{SDC}(G,M)$ of $G$ in 
$M\rtimes G$ as the set containing all subgroups $H \leq M \rtimes G$, such that the restriction of $p_0$ to $H$ is an isomorphism onto $G$. The map $s \mapsto s(G)$ entails a bijection between $\mbox{Sec}(G,M)$ and $\mbox{SDC}(G,M)$. Now we can transfer the equivalence relation $\sim_M$ to a relation on $\mbox{SDC}(G,M)$, namely $H \sim H'$ if there exists an $m \in M$ such that $H=i_0(m) H' i_0(m)^{-1}$. There is also an induced group structure 
\[H_1 + H_2 = \{(h_1+h_2, g) \in M \rtimes G \ | \ (h_1,g) \in H_1, (h_2,g) \in H_2\},\] that turns $\sdc (G,M)$ and its quotient under the equivalence relation into abelian groups.

Consider the case where there is an exact sequence of groups 
\[\xymatrix{1 \ar[r] & N \ar[r] & G \ar[r]^{\pi} & Q \ar[r] & 1}.\] We know that conjugation in the normal subgroup $N$ induces a $G$-module structure on $H^1(N,M)$. Moreover, the action of $G$ factors through $Q$, so that $H^1(N,M)$ becomes a $Q$-module. We would like to know what the $G$-action looks like on derivations, splittings or semi-direct complements, since this action directly gives us the action of $Q$. 

We denote the action of an element $g \in G$ on a derivation $d : N \rightarrow M$ by $^g d$. Using the standard $N$-resolution, one can see that 
\[(^gd)(n)=g \cdot d(g^{-1} n g).\] It is now straightforward to check that  on the level of splittings $s: N \rightarrow M \rtimes N$ and
semi-direct complements $H$ of $N$ in $M \rtimes N$ the above action translates to 
\[(^gs)(n)=(0,g) s(g^{-1}ng)(0,g)^{-1}\mbox{ and } ^g H = (0,g) H (0,g)^{-1},\]  where we view $M \rtimes N$ as a subgroup of $M \rtimes G$. If $d$ is an inner derivation, $^g d$ will also be an inner derivation. Therefore, we have a well-defined action of $G$ on $\mbox{Der}(N,M)/\mbox{Inn} (N,M)$,  $\mbox{Sec}(N,M) / \sim_M$ and $\sdc (N,M)/ \sim$. These actions factor through $Q$ and correspond to the usual $Q$-module structure on $H^1(N,M)$.

\section{A categorical point of view}\label{hom_point_view}

Let $Pair$ be the category of all pairs $(G,M)$ where $G$ is a group and $M$ is a $G$-module. A morphism $(\alpha, f)$ from $(G,M)$ to $(G',M')$ consists of a group homomorphism $\alpha : G' \rightarrow G$ and a $G'$-module morphism $f : M \rightarrow M'$, where $M$ is a $G'$-module via $\alpha$. Now $\sdc (-,-)$ is a functor from $Pair$ to the category of sets, where the induced maps are defined by 
\[\sdc (\alpha, f)(H)=\{(f(m),g)\ | \ (m,\alpha(g)) \in H \}, \]
for $H \in \sdc (G,M)$.
 Note that $\sdc (1,f)$ and $\sdc (\alpha, 1)$ are given by the direct and inverse image, respectively. 

As we will see, the functor $\sdc (G,-)$ preserves products and final objects. 
This is an important property, since it implies that the functor will preserve group objects. Let $\mathcal{C}$ be a category that has products and a final object $1_{\mathcal{C}}$. We call $\mathcal{C}$ a \emph{category that admits group objects}. A group object $X$ is an object such that there exist morphisms $\mu : X \times X \rightarrow X$ (``{\em multiplication}''), $\eta : 1_{\mathcal{C}} \rightarrow X$ (``{\em unit}'') and $i : X \rightarrow X$ (``{\em inverse}'') satisfying the commutative diagrams 
that correspond to the usual group axioms (e.g.\ see \cite[page 75]{macl71-1}). 

If $F : \mathcal{C} \rightarrow \mathcal{D}$ is a functor between two categories that admit group objects, there is a well known criterion for $F$ to preserve group objects.
\begin{lemma}\label{preserving_group_objects}
If $F : \mathcal{C} \rightarrow \mathcal{D}$ preserves products and final objects, then $F$ will preserve group objects.
\end{lemma}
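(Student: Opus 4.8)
The plan is to transport the group-object structure on $X$ across the canonical comparison morphisms that witness the preservation of products and of the final object. Write $1_{\mathcal{D}}$ for a final object of $\mathcal{D}$; since $F$ preserves final objects, $F(1_{\mathcal{C}})$ is one, and I may as well take $1_{\mathcal{D}} = F(1_{\mathcal{C}})$. For any two objects $Y,Z$ of $\mathcal{C}$, the morphisms $F(\pi_Y) : F(Y\times Z)\rightarrow FY$ and $F(\pi_Z): F(Y\times Z)\rightarrow FZ$ induce, by the universal property of the product in $\mathcal{D}$, a comparison morphism $\kappa_{Y,Z} : F(Y\times Z)\rightarrow FY\times FZ$, and the hypothesis that $F$ preserves products says precisely that each $\kappa_{Y,Z}$ is an isomorphism. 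Applying this with $Y=Z=X$, and also to the triple product (building $\kappa_{X,X,X}: F(X\times X\times X)\rightarrow FX\times FX\times FX$ out of the $\kappa$'s), we obtain natural isomorphisms $F(X\times X)\cong FX\times FX$ and $F(X\times X\times X)\cong FX\times FX\times FX$.

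Next I define the candidate structure maps on $FX$: let $\mu' = F(\mu)\circ\kappa_{X,X}^{-1} : FX\times FX\rightarrow FX$, let $\eta' = F(\eta) : 1_{\mathcal{D}}=F(1_{\mathcal{C}})\rightarrow FX$, and let $i' = F(i) : FX\rightarrow FX$. It then remains to verify the group-object axioms — associativity, the two unit laws, and the two inverse laws — for $(FX,\mu',\eta',i')$. Each of these is a commuting diagram built from $\mu'$, $\eta'$, $i'$, projections, the diagonal $\Delta_{FX}$, and the unique map to $1_{\mathcal{D}}$. The uniform strategy is: rewrite the diagram in terms of $F(\mu)$, $F(\eta)$, $F(i)$ and the comparison isomorphisms, then use that (i) $F$ applied to the corresponding axiom-diagram for $X$ in $\mathcal{C}$ commutes, and (ii) the $\kappa$'s are compatible with all the structural maps of the product — namely $F(\pi)$ corresponds to the appropriate $\pi$ under $\kappa$, $F(\Delta_X)$ corresponds to $\Delta_{FX}$ under $\kappa$, and $F$ of the unique map $X\rightarrow 1_{\mathcal{C}}$ is the unique map $FX\rightarrow 1_{\mathcal{D}}$ because $1_{\mathcal{D}}$ is final. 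Chasing these identities turns each axiom for $FX$ into $F$ of the corresponding axiom for $X$, which holds by assumption.

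The only place where genuine care is needed is the bookkeeping of comparison isomorphisms in the associativity square: one must check that the two ways of assembling $\kappa_{X,X,X}$ (through $X\times(X\times X)$ versus $(X\times X)\times X$) agree, and that $F$ carries the associativity constraint for products in $\mathcal{C}$ to the one in $\mathcal{D}$ modulo these isomorphisms. This is a formal consequence of the uniqueness clauses in the universal properties, so no real obstacle arises — in effect one is reproving the folklore fact that a product- and terminal-object-preserving functor is automatically a (strong) monoidal functor for the cartesian monoidal structures, and invoking that monoidal functors preserve monoid, hence group, objects. For our purposes it suffices to record that $(FX,\mu',\eta',i')$ is the required group object in $\mathcal{D}$, which completes the proof. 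For the later application to $F = \sdc(G,-)$ one only needs the product and terminal-object preservation, which is checked separately.
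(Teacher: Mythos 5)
Your construction of the transported structure maps is exactly the one the paper records: your $\kappa_{X,X}$ is the paper's canonical comparison $h=(F(pr_1),F(pr_2))$, and your $(FX,\,F\mu\circ\kappa_{X,X}^{-1},\,F\eta,\,Fi)$ is precisely the group object the paper exhibits, with the axiom-checking left to the standard diagram chase that the paper (which states the lemma as well known, without proof) also omits. The proposal is correct and takes essentially the same approach.
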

 Explicitly, if $(X, \mu, \eta, i)$ is a group object, $(FX, F \mu \circ h^{-1}, F \eta, F i)$ will be a group object, where $h$ is the canonical 
map $h=(F(pr_1),F(pr_2)): F(X\times X)\rightarrow FX \times FX $, which is an isomorphism since $F$ preserves products. Often, we will omit $h^{-1}$
and simply write $(F X, F \mu, F \eta, F i)$.

\begin{lemma}\label{homomorphism_of_group_objects}
Let $\mathcal{C}$ and $\mathcal{D}$ be categories that admit group objects, and let $F : \mathcal{C} \rightarrow \mathcal{D}$ and $G : \mathcal{C} \rightarrow \mathcal{D}$ be functors that preserve group objects. 
If $a$ is a natural transformation between $F$ and $G$, then for every group object $X$ of $\mathcal{C}$, $a_X : FX \rightarrow GX$ is a homomorphism of group objects, i.e.\ the diagram 
\[\xymatrix{FX \times FX \ar[d]^{F \mu} \ar[r]^{a_X \times a_X} & GX \times GX \ar[d]^{G \mu}\\
FX \ar[r]^{a_X} & GX}\]
commutes.
\end{lemma}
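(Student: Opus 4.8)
The plan is to derive everything from the naturality of $a$ together with the universal property of products; no use will be made of the unit $\eta$, the inverse $i$, or any group axiom, since the statement is purely about $a$ being compatible with the product structure. Write $h_F = (F(pr_1),F(pr_2)) : F(X\times X) \rightarrow FX\times FX$ and $h_G = (G(pr_1),G(pr_2)) : G(X\times X)\rightarrow GX\times GX$ for the canonical isomorphisms coming from the fact that $F$ and $G$ preserve products. Recall that, under the convention announced after Lemma \ref{preserving_group_objects}, the morphism denoted $F\mu$ in the statement is really $F\mu\circ h_F^{-1} : FX\times FX\rightarrow FX$, and similarly for $G\mu$; so the square to be shown is
\[ a_X\circ(F\mu\circ h_F^{-1}) = (G\mu\circ h_G^{-1})\circ(a_X\times a_X). \]

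The first ingredient is free: for the morphism $\mu : X\times X\rightarrow X$, naturality of $a$ gives the commuting square
\[\xymatrix{F(X\times X) \ar[r]^-{F\mu} \ar[d]_{a_{X\times X}} & FX \ar[d]^{a_X}\\ G(X\times X)\ar[r]^-{G\mu} & GX,}\]
i.e.\ $a_X\circ F\mu = G\mu\circ a_{X\times X}$, and applying naturality of $a$ to the projections $pr_i : X\times X\rightarrow X$ gives $G(pr_i)\circ a_{X\times X} = a_X\circ F(pr_i)$ for $i=1,2$.

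The heart of the argument is to identify $a_{X\times X}$ with $a_X\times a_X$ along these isomorphisms, that is, to show $h_G\circ a_{X\times X} = (a_X\times a_X)\circ h_F$. Both sides are morphisms with codomain the product $GX\times GX$, so by the universal property it suffices to check equality after composing with each projection $pr_i$. On the left, $pr_i\circ h_G\circ a_{X\times X} = G(pr_i)\circ a_{X\times X} = a_X\circ F(pr_i)$ by the naturality noted above; on the right, $pr_i\circ(a_X\times a_X)\circ h_F = a_X\circ pr_i\circ h_F = a_X\circ F(pr_i)$ by the definitions of $a_X\times a_X$ and of $h_F$. Hence the two agree, and equivalently $a_{X\times X}\circ h_F^{-1} = h_G^{-1}\circ(a_X\times a_X)$.

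Finally I assemble the pieces: starting from $a_X\circ F\mu\circ h_F^{-1}$, the naturality square for $\mu$ gives $a_X\circ F\mu\circ h_F^{-1} = G\mu\circ a_{X\times X}\circ h_F^{-1}$, and substituting $a_{X\times X}\circ h_F^{-1} = h_G^{-1}\circ(a_X\times a_X)$ from the previous step yields $G\mu\circ h_G^{-1}\circ(a_X\times a_X)$, which is exactly the desired right-hand side. The only genuine subtlety — the ``main obstacle'', such as it is — is the bookkeeping with the canonical isomorphisms $h_F$ and $h_G$ that are suppressed in the notation $F\mu$, $G\mu$; once these are written out, the lemma reduces to the product-compatibility of $a$, which is forced by the universal property of the product and the naturality of $a$ on the projections.
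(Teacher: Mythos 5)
Your proof is correct and complete; the paper gives no proof of this lemma at all (it merely remarks that the statement ``is also well known''), so there is nothing to compare against beyond the standard argument, which is exactly what you have written. Your two ingredients --- naturality of $a$ at $\mu$ and at the projections, plus the universal property of the product to establish $h_G\circ a_{X\times X}=(a_X\times a_X)\circ h_F$ --- are the intended ones, and you are right that the only real care needed is the bookkeeping with the suppressed canonical isomorphisms $h_F$ and $h_G$ from the convention following Lemma \ref{preserving_group_objects}.
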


This is also well known. We will use this lemma to show that certain maps in our seven-term sequence are homomorphisms. 

Now let's turn our attention to the functor $\sdc(G,-)$ from the category $G$--modules to the category of sets. 
\begin{lemma}\label{sdc_preserves_products}
The functor $\sdc(G,-)$ preserves products and the final object.
\end{lemma}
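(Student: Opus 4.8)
The plan is to verify the two claims directly on the level of semi-direct complements, using the identification $\sdc(G,M)\cong\mathrm{Sec}(G,M)\cong\mathrm{Der}(G,M)$ of Section~\ref{der_sec_sdc} only as a guide. The essential point is that one must check the \emph{canonical} comparison maps (functoriality of $\sdc(G,-)$ applied to the structure morphisms of a product, resp.\ of the final object) are bijections, not merely that some bijection exists.

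First, the final object. In the category of $G$-modules the final object is the zero module $0=\{0\}$, and $0\rtimes G$ is just $G$ with $p_0=\mathrm{id}_G$. The only subgroup $H\le G$ on which $p_0$ restricts to an isomorphism onto $G$ is $G$ itself, so $\sdc(G,0)$ is a one-point set, hence a final object in the category of sets. (Equivalently: the zero derivation is the unique derivation $G\to 0$.)

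Next, binary products. Let $M_1,M_2$ be $G$-modules and give $M_1\times M_2$ the diagonal action, with projections $pr_k:M_1\times M_2\to M_k$. I would show that the canonical map
\[h=\bigl(\sdc(G,pr_1),\,\sdc(G,pr_2)\bigr):\ \sdc(G,M_1\times M_2)\longrightarrow \sdc(G,M_1)\times\sdc(G,M_2)\]
is a bijection. By the definition of the functor, $\sdc(G,pr_k)(H)=\{(pr_k(m),g)\mid (m,g)\in H\}$. Each $H\in\sdc(G,M)$ meets every fibre $p_0^{-1}(g)$ in exactly one point; writing it as $(m_H(g),g)$, the complement $H$ is determined by the function $g\mapsto m_H(g)$, and $\sdc(G,pr_k)(H)$ is determined by $g\mapsto pr_k(m_H(g))$. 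This shows $h$ is injective and pins down its inverse: given $(H_1,H_2)$, put $K=\{((m_1,m_2),g)\mid (m_1,g)\in H_1,\ (m_2,g)\in H_2\}$. One checks that $K$ is a subgroup of $(M_1\times M_2)\rtimes G$ (a one-line computation with the semi-direct product law, using that $H_1,H_2$ are subgroups and that the action on $M_1\times M_2$ is diagonal) and that $p_0$ restricts to an isomorphism $K\to G$ (since $K$ meets each fibre $p_0^{-1}(g)$ in the single point $((m_{H_1}(g),m_{H_2}(g)),g)$), so $K\in\sdc(G,M_1\times M_2)$; and clearly $h(K)=(H_1,H_2)$. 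Hence $h$ is a bijection, i.e.\ an isomorphism in the category of sets, and by induction $\sdc(G,-)$ preserves all finite products.

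I do not anticipate a genuine obstacle: the only subtlety is to make sure the map shown to be bijective is exactly the canonical comparison map $(\sdc(G,pr_1),\sdc(G,pr_2))$ and not merely an ad hoc bijection, and to not skip the routine verification that the set $K$ above is closed under the operations of $(M_1\times M_2)\rtimes G$.
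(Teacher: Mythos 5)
Your proof is correct, and since the paper leaves this verification to the reader, your direct argument (identifying $\sdc(G,0)$ as a singleton and exhibiting the explicit inverse $K$ to the canonical comparison map $(\sdc(G,pr_1),\sdc(G,pr_2))$) is exactly the intended one. Your care in checking the canonical map itself, rather than an ad hoc bijection, is precisely the point needed for Lemma \ref{preserving_group_objects} to apply later.
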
 The proof is left to the reader. 

An object $M$ in the category of $G$-modules is always a group object, with the obvious commutative group law, denoted as $+$. It follows that $\sdc (G,M)$ is a group object in the category of sets, with multiplication $\sdc (1, +)$. The reader can check that this is the same group law as the one described in section \ref{der_sec_sdc}.

\section{Construction of $tr$}\label{first_map}

Given a short exact sequence of groups
\[\xymatrix{1 \ar[r] & N \ar[r]^j & G \ar[r]^{\pi} & Q \ar[r] & 1}\] and a $G$-module $M$, we want to construct an exact sequence \[\xymatrix{0 \ar[r] & H^1(Q,M^N) \ar[r]^{\infl} & H^1(G,M) \ar[r]^{\res} & H^1(N,M)^Q \ar[r]^{\tr} & H^2(Q,M^N)\ar[r]^{\infl} & H^2(G,M)}.\] Note that $M$ is also an $N$-module, and $M^N$ is a $Q$-module, so the cohomology groups are well-defined. The existence of the exact sequence follows from the Hochschild-Serre spectral sequence. In this section, we want to give an explicit description of a map that can be chosen to be the third map.

It is known that the image of the second inflation map is contained in the kernel of the restriction map $\res : H^2(G,M) \rightarrow H^2(N,M)$ (or see corollary \ref{cor:im_in_1}). So it is not surprising that we will first turn our attention to a general construction involving extensions in $\Ker(\res)$. 
Take an extension
\begin{equation}\label{extension}\underline{e}: \xymatrix{0 \ar[r] & M \ar[r]^i & E \ar[r]^p & G \ar[r] & 1}\end{equation} which is partially split. This means that the sequence
\[\xymatrix{0 \ar[r] & M \ar[r]^{i \ \ \ } & p^{-1}(N) \ar[r]^{\ \ p} & N \ar[r] & 1}\] is split and therefore equivalent to the standard split extension through an isomorphism $\gamma : M \rtimes N \rightarrow p^{-1}(N)$. (We will sometimes identify the two extensions.) In other words, the class $[\underline{e}]$ belongs to the kernel $H^2(G,M)_1$ of the restriction map $\res: H^2(G,M) \rightarrow H^2(N,M)$.

A \emph{partial semi-direct complement} $H$ of $N$ in $\underline{e}$ is a subgroup $H \leq E$ such that $p(H) = N$ and $p|_H : H \rightarrow N$ is an isomorphism. Two partial semi-direct complements $H_1$ and $H_2$ of $N$ in $\underline{e}$ are \emph{equivalent} (denoted by $H_1 \sim H_2$) if there exists an element $m \in M$ such that $H_2=i(m)H_1i(m)^{-1}$. 
Every partially split extension $\underline{e}$ determines an action of $E$ on the partial semi-direct complements $H \leq E$ of $N$ in $\underline{e}$, induced by conjugation. %, so 
It is immediate that this action will factor through $p : E \rightarrow G$, after passing to equivalence classes. It factors further through $\pi : G \rightarrow Q$. 
The obtained action of $Q$ is given by ${}^q [H]=[{}^e H]$, where $\pi(p(e))=q$.

As a special case, we can consider the standard split extension of $G$ by $M$
\[\underline{e}_0:\xymatrix{0 \ar[r] & M \ar[r] & M \rtimes G \ar[r] & G \ar[r] & 1.}\] In this situation, the set of partial semi-direct complements corresponds exactly to $\sdc(N,M)$ and the action we obtain then is the same action as the one we discussed at the end of section~\ref{der_sec_sdc}. 

Every partial semi-direct complement $H$ of $N$ in $\underline{e}$ determines a homomorphism $s : N \rightarrow E$, mapping $n$ to its unique pre-image under $p$ in $H$. Observe that $p \circ s = \mbox{id}$. Such a homomorphism $s : N \rightarrow E$ is called a \emph{partial splitting} of $\underline{e}$ over $N$, and for every $H$, there is a unique partial splitting $s$ with $s(N)=H$.
Observe that ${}^e s$ defined by ${}^e s(n) = e s(p(e)^{-1}n p(e))e^{-1}$ is a partial splitting with  $({}^e s) (N)={}^e (s(N))$, so this is an action of $E$ on the partial splittings of $\underline{e}$ that corresponds to the above action of $E$ on the partial semi-direct complements. Two partial splittings $s_1$ and $s_2$ of $\underline{e}$ are \emph{equivalent} (we write $s_1 \sim s_2$) if there exists an element $m \in M$ such that ${}^{i(m)}s_1=s_2$ or equivalently, $i(m) s_1(n) i(m)^{-1}=s_2(n)$ for all $n \in N$. 

Now we can start with the construction of a map $\omega$ that will give rise to a map $\tr : H^1(N,M)^Q \rightarrow H^2(Q,M^N)$.  
Take an extension (\ref{extension}) and a partial semi-direct complement $H$ of $N$ in $\underline{e}$. Since $H$ is isomorphic to $N$, one can expect that ``taking the quotient of $E$ and $H$'' will correspond to taking the quotient of $G$ and $N$. Of course, $H$ doesn't need to be a normal group of $E$, so we have to pass to the normalizer $N_E(H)$ in $E$. 
The following two lemmas are easily checked. 
\begin{lemma}\label{lemma_1}
The intersection $i(M) \cap N_E(H)$ equals $i(M^N)$.
\end{lemma}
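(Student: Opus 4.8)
The plan is to unpack both sides of the claimed equality $i(M) \cap N_E(H) = i(M^N)$ and show each containment by a direct conjugation computation, using the identification of $p^{-1}(N)$ with the standard split extension $M \rtimes N$ via the isomorphism $\gamma$.

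First I would observe that an element of $i(M)$ lies in $p^{-1}(N)$, since $p(i(M)) = 0 \in N$, so it makes sense to ask when such an element normalizes $H$. Under $\gamma^{-1}$, the subgroup $H$ corresponds to a semi-direct complement of $N$ in $M \rtimes N$, i.e.\ a set of the form $\{(d(n), n) \mid n \in N\}$ for some derivation $d : N \to M$, and the element $i(m)$ corresponds to $(m,1)$. So the whole question reduces to: for which $m \in M$ does conjugation by $(m,1)$ fix the subgroup $\{(d(n),n) \mid n \in N\}$ of $M \rtimes N$? I would then compute $(m,1)(d(n),n)(m,1)^{-1} = (m + d(n) - n\cdot m, n)$ using the semi-direct product law. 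This element lies in $H$ again precisely when its first coordinate equals $d(n)$, i.e.\ when $m - n \cdot m = 0$, i.e.\ $n \cdot m = m$, for \emph{all} $n \in N$ (note the subgroup is normalized, not merely that each generator maps into it, but since $p$ restricted to the conjugate is still an isomorphism onto $N$ the two conditions coincide). That condition says exactly $m \in M^N$, which gives both inclusions at once.

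The one genuinely delicate point — and what I would flag as the main obstacle — is that the action of $N$ on $M$ appearing in the computation above is the action pulled back along $p^{-1}(N) \to N$, and one must be careful that this is the same as the given $N$-action on $M$ coming from the $G$-module structure, i.e.\ that $\gamma$ is compatible with the module structures. This is exactly the content of the statement that $\gamma$ is an isomorphism of extensions (it restricts to the identity on $M$ and covers the identity on $N$), so the conjugation action of $p^{-1}(N)$ on $i(M)$ agrees, after transport by $\gamma$, with the conjugation action of $M \rtimes N$ on its ideal $M$, which by definition is the $N$-action; hence $M^N$ computed inside $E$ agrees with $M^N$ computed in $M \rtimes N$. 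Once this identification is in hand the computation is routine.

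An alternative, perhaps cleaner, presentation avoids coordinates entirely: for $m \in M$, the element $i(m)$ normalizes $H$ iff ${}^{i(m)}s = s$ where $s$ is the partial splitting with $s(N) = H$, and ${}^{i(m)}s(n) = i(m)s(n)i(m)^{-1}$ since $p(i(m)) = 0$; writing $s(n) = i(d(n))\tilde n$ for a set-theoretic lift $\tilde n$, this equals $s(n)$ for all $n$ iff $i(m) - {}^{\tilde n}i(m) = 0$ in $i(M)$ for all $n$, iff $m \in M^N$. Either way the proof is short; I would present the coordinate version since the identification $p^{-1}(N) \cong M \rtimes N$ has already been fixed in the text and makes the computation most transparent.
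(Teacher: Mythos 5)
Your proof is correct, and since the paper leaves this lemma as "easily checked," your coordinate computation in $M \rtimes N$ (together with the observation that containment of the conjugate in $H$ forces equality, both being graphs over $N$) is exactly the intended verification. The point you flag about $\gamma$ respecting the $N$-action is handled by $\gamma$ being an equivalence of extensions, as you note, so there is no gap.
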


\begin{lemma}\label{lem:surjective_invariant}
The restriction $p|_{N_E(H)}: N_E(H) \rightarrow G$ is surjective iff $^{e}H \sim H$ for all $e \in E$ (or equivalently, $^{e}s \sim s$ for all $e \in E$, where $s$ is the unique splitting with $s(N)=H$).
\end{lemma}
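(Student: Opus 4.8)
The plan is to prove both implications directly from the definition of the normalizer and the fact that $p|_H : H \to N$ is an isomorphism. First I would establish the direction ``$\Leftarrow$''. Assume ${}^e H \sim H$ for every $e \in E$, so for each $e$ there is some $m = m(e) \in M$ with ${}^e H = i(m)\, H\, i(m)^{-1}$, i.e.\ $i(m)^{-1} e H e^{-1} i(m) = H$, which says precisely that $i(m)^{-1} e \in N_E(H)$. Given an arbitrary $g \in G$, pick any $e \in E$ with $p(e) = g$; then $i(m)^{-1} e \in N_E(H)$ and $p\bigl(i(m)^{-1} e\bigr) = p(i(m))^{-1} p(e) = g$ since $p \circ i$ is trivial. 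Hence $p|_{N_E(H)}$ hits every $g \in G$, i.e.\ it is surjective.

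Next the direction ``$\Rightarrow$''. Assume $p|_{N_E(H)} : N_E(H) \to G$ is surjective, and let $e \in E$ be arbitrary; set $g = p(e)$. Choose $e' \in N_E(H)$ with $p(e') = g$. Then $e$ and $e'$ differ by an element of $\Ker p = i(M)$, say $e = i(m) e'$ for some $m \in M$. Since $e'$ normalizes $H$, we get ${}^e H = {}^{i(m)} \bigl({}^{e'} H\bigr) = i(m)\, H\, i(m)^{-1}$, so ${}^e H \sim H$. This proves the equivalence of the first two statements.

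For the parenthetical equivalence, I would invoke the correspondence already set up in the excerpt: the bijection $s \leftrightarrow s(N)$ between partial splittings and partial semi-direct complements, together with the identity $({}^e s)(N) = {}^e(s(N))$ and the observation that $s_1 \sim s_2 \iff s_1(N) \sim s_2(N)$ (both translate to ``differ by conjugation by an $i(m)$''). Thus ${}^e s \sim s$ for all $e$ is literally the same condition as ${}^e H \sim H$ for all $e$, where $H = s(N)$.

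The only point requiring any care — and it is minor — is making sure that in the ``$\Rightarrow$'' direction one is allowed to correct $e$ by an element of $i(M)$ and not merely by an element of the larger group, which is exactly where exactness of (\ref{extension}) at $E$ (so $\Ker p = \Image i = i(M)$) is used; everything else is a direct unwinding of definitions. Lemma \ref{lemma_1} is not actually needed for this statement, though it explains why $N_E(H) \cap i(M) = i(M^N)$ controls the kernel of $p|_{N_E(H)}$, which is the natural next step in the construction of $\omega$.
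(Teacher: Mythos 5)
Your proof is correct and is exactly the direct, definitional argument the paper has in mind when it declares the lemma ``easily checked'': translating ${}^eH\sim H$ into $i(m)^{\pm1}e\in N_E(H)$ and using $\Ker p=i(M)$ to adjust a preimage of $g$ into the normalizer (and conversely). Your remarks on the splitting/complement dictionary and on where exactness at $E$ is used are accurate, and nothing further is needed.
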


Let $\Omega$ be the set of all pairs $(\underline{e},H)$, where $\underline{e}$ is an extension
\[\underline{e}:\xymatrix{0 \ar[r] & M \ar[r]^i & E \ar[r]^p & G \ar[r] & 1,}\] 
$[\underline{e}] \in H^2(G,M)_1$ and $H \leq E$ is a partial semi-direct complement of $N$ in $\underline{e}$ with ${}^e H \sim H$ for all $e \in E$.
By the lemmas above, the sequence
\[\xymatrix{0 \ar[r] & M^N \ar[r]^-{i} & N_E(H) \ar[r]^-{p} & G \ar[r] & 1}\] is exact. 
It is not difficult to check that this sequence induces an exact sequence
\[\underline{e}':\xymatrix{0 \ar[r] & M^N \ar[r]^-{\overline{i}} & N_E(H)/H \ar[r]^-{\overline{p}} & Q \ar[r] & 1,}\] where $\overline{i}$ and $\overline{p}$ are the induced maps. 
This gives a map 
\[\omega: \Omega \rightarrow H^2(Q,M^N),\]
mapping $(\underline{e},H)$ to the class of the extension $\underline{e}'$.

First we make some remarks about the construction of $\omega$. In the commutative diagram
\[\xymatrix{0 \ar[r] & M \ar[r]^i  & E \ar[r]^p & G \ar[r] & 1\\
0 \ar[r] & M^N \ar@{^{(}->}[u]^{j} \ar@{=}[d] \ar[r]^{i \ \ } & N_E(H) \ar[u]^{\iota} \ar[d] \ar[r]^{\ \ p} & G \ar@{=}[u] \ar[d]^{\pi}\ar[r] & 1\\
0 \ar[r] & M^N \ar[r]^{\overline{i} \ \ \ } & N_E(H)/H \ar[r]^{\ \ \ \overline{p}}& Q \ar[r] & 1,}\]
$E$ is isomorphic to the push-out construction of the inclusion map $j:M^N \hookrightarrow M$ and $i:M^N \rightarrow N_E(H)$, and $N_E(H)$ is isomorphic to the pull-back of $\overline{p}:N_E(H)/H \rightarrow Q$ and $\pi:G \rightarrow Q$. 
To show the first statement, let $E'$ be the push-out construction of $j:M^N \rightarrow M$ and the map $i:M^N \rightarrow N_E(H)$. 
We know from section \ref{pb_po} that we can find a map $\widetilde{p}$ such that
\[\xymatrix{0 \ar[r] & M \ar[r]^{\widetilde{i}} & E' \ar[r]^{\widetilde{p}} & G \ar[r]& 1 \\ 0 \ar[r] & M^N \ar[u]^{j} \ar[r] & N_E(H) \ar[u]^q \ar[r]^{p} & G \ar@{=}[u]\ar[r]& 1}\]
is a commutative diagram with exact rows. It is not difficult to see that we can use the universal property of the push-out construction to get a map $h : E' \rightarrow E$ such that $h \circ \widetilde{i} = i$ and $h \circ q = \iota$. Thus, we get the following commutative diagram with exact rows. 
\[\xymatrix{0 \ar[r] & M \ar[r]^{\widetilde{i}} & E' \ar[d]^h \ar[r]^{\widetilde{p}} & G \ar[r]& 1 \\ 0 \ar[r] & M \ar@{=}[u] \ar[r]^i & E  \ar[r]^p & G \ar@{=}[u]\ar[r]& 1}\] 
The right-hand side commutes, since $\widetilde{p}\circ \widetilde{i}=1=(p \circ h)\circ\widetilde{i}$, by definition of $\widetilde{p}$. Also, $(p \circ h) \circ q = p \circ \iota = \widetilde{p} \circ q$. Therefore uniqueness in the universal property gives us the equality $\widetilde{p}=p \circ h$. Now by the five-lemma, $h$ is an isomorphism and $E \cong E'$. The pull-back statement is proven in the same way.

Our goal is to find a map $H^1(N,M)^Q \rightarrow H^2(Q,M^N)$ completing the five-term exact sequence. 
Define $\sdc(N,M)^Q$ as the pre-image of $H^1(N,M)^Q$ under the projection $\sdc(N,M)$ $  \rightarrow H^1(N,M)$. As we have seen, $\sdc(N,M)^Q$ consists of all partial semi-direct complements $H$ of $N$ in $ M \rtimes G$ with ${}^{(m,g)}H \sim H$ for all $(m,g) \in M \rtimes G$. Therefore, we can define $\hat{\tr} : \sdc(N,M)^Q \rightarrow H^2(Q,M^N)$ as \[\hat{\tr}(H)=\omega(\underline{e}_0,H),\] where $\underline{e}_0$ is the standard split extension of $G$ by $M$. 
We will show that this indeed induces a map $\tr:H^1(N,M)^Q \rightarrow H^2(Q,M^N)$ that can be chosen as the third map in the five-term sequence. For this, $\tr$ has to be a homomorphism, and the corresponding five-term sequence has to be exact. We also want the map to be natural. 

\begin{lemma}\label{nat_trans}
$\hat{\tr}: \sdc(N,-)^Q \rightarrow H^2(Q,-^N)$ is a natural transformation of functors.
\end{lemma}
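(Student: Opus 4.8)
The plan is to verify that $\hat{\tr}$ commutes with the maps induced by morphisms $(\alpha,f):(G,M)\to(G',M')$ in $Pair$ — but here, since we are working over a fixed base extension $1\to N\to G\to Q\to 1$, the relevant naturality is in the module variable only, so $\alpha=\mathrm{id}_G$ and we must check naturality with respect to $G$-module morphisms $f:M\to M'$. Concretely, for such an $f$, I must show that the square
\[\xymatrix{\sdc(N,M)^Q \ar[r]^-{\hat{\tr}} \ar[d]_{\sdc(1,f)} & H^2(Q,M^N) \ar[d]^{(f|_{M^N})_*}\\
\sdc(N,M')^Q \ar[r]^-{\hat{\tr}} & H^2(Q,M'^N)}\]
commutes, where the left vertical map is the direct-image map $H\mapsto\{(f(m),g)\mid(m,g)\in H\}$ (restricted to the $Q$-invariant part, which one first checks is respected — this follows from functoriality of $\sdc(N,-)$ and the description of the $Q$-action at the end of section~\ref{der_sec_sdc}), and the right vertical map is induced by the $Q$-module morphism $M^N\to M'^N$ obtained by restricting $f$.

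The key step is to trace through the construction of $\omega$ applied to the standard split extensions $\underline{e}_0$ over $M$ and over $M'$. Given $H\in\sdc(N,M)^Q$, let $H'=\sdc(1,f)(H)\in\sdc(N,M')^Q$. I would first observe that the push-out construction of section~\ref{pb_po} is precisely the tool linking the two situations: the split extension $M'\rtimes G$ is the push-out construction of $\mathrm{id}:M\to M$ — no, more to the point, applying the push-out of $i_0:M\to M\rtimes G$ along $f:M\to M'$ yields exactly $M'\rtimes G$ with the induced maps. Then I want to show that this push-out takes $N_{M\rtimes G}(H)$ to $N_{M'\rtimes G}(H')$ and $N_{M\rtimes G}(H)/H$ to $N_{M'\rtimes G}(H')/H'$, compatibly with the quotient maps to $Q$. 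Using Lemma~\ref{lemma_1}, the kernels are $i_0(M^N)$ and $i_0(M'^N)$ respectively, and $f$ carries one to the other. Chasing the diagram just above Lemma~\ref{nat_trans} (the three-row diagram defining $\underline{e}'$) simultaneously for $M$ and $M'$, and using that push-out along $f$ of the middle row for $M$ gives the middle row for $M'$, one concludes that $\underline{e}'_{M'}$ is the push-out of $\underline{e}'_M$ along $f|_{M^N}$. By the description recalled at the end of section~\ref{pb_po}, pushing out a $2$-extension along a module map realizes the induced map on $H^2$; hence $[\underline{e}'_{M'}]=(f|_{M^N})_*[\underline{e}'_M]$, which is exactly $\hat{\tr}(H')=(f|_{M^N})_*\hat{\tr}(H)$.

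The main obstacle I anticipate is the bookkeeping needed to identify all the push-outs correctly: one must check that the direct-image subgroup $H'=\{(f(m),g)\mid(m,g)\in H\}$ really is the image of $H$ under the canonical map $M\rtimes G\to M'\rtimes G$ induced on push-outs, that normalizers are carried to normalizers under this (generally non-injective) map — which needs the $Q$-invariance hypothesis ${}^e H\sim H$ so that Lemma~\ref{lem:surjective_invariant} applies on both sides and the relevant sequences stay exact — and that passing to the quotients $N_E(H)/H$ is compatible with the push-out. None of these is deep, but there are several commuting squares to align; I would organize the argument around the single assertion ``push-out along $f$ sends the defining three-row diagram for $(\underline{e}_0,H)$ to that for $(\underline{e}_0,H')$,'' and then invoke the already-established behavior of push-outs on $H^2$ to finish in one line.
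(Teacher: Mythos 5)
Your proposal is correct and follows essentially the same route as the paper: the paper's (very terse) proof likewise computes $f_*(\hat{\tr}(H))$ via the push-out construction and then invokes the universal property together with the five-lemma, ``in the same way as in the remarks about the construction of $\omega$,'' to identify it with $\hat{\tr}(f_*(H))$. Your more detailed bookkeeping of normalizers, kernels (via Lemma~\ref{lemma_1}) and quotients is exactly what the paper leaves to the reader.
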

\begin{proof}
We have to show that for every $G$-module map $f : M_1 \rightarrow M_2$, the following diagram is commutative.
\[\xymatrix{\sdc(N,M_1)^Q \ar[r]^{\hat{\tr}} \ar[d]^{\sdc(1,f)} & H^2(Q,M_1^N) \ar[d]^{H^2(1,f)}\\
\sdc(N,M_2)^Q \ar[r]^{\hat{\tr}}  & H^2(Q,M_2^N) }\]
For $H \in \sdc(N,M_1)^Q$, use the push-out construction to find a representant of $f_*(\hat{\tr}(H))$. The universal property and the five-lemma give an equivalence of $f_*(\hat{\tr}(H))$ with the extension $\hat{\tr}(f_*(H))$, in the same way as in the remarks about the construction of $\omega$.
\end{proof}

It is immediate that the cohomology functors $H^n(G,-)$ preserve products
and the final object $0$. Furthermore, if $M_1$ and $M_2$ are $G$-modules, the action of $G$ on $\sdc(G,M_1 \times M_2) \cong \sdc(G,M_1) \times \sdc(G,M_2)$ is the diagonal action. 
Using this information and lemma \ref{sdc_preserves_products}, we see that also the functors $\sdc(N,-)^Q$ and $H^2(Q,-^N)$ preserve products and final objects, so they will preserve group objects. 
Now, by lemma \ref{homomorphism_of_group_objects} and lemma \ref{nat_trans}, we obtain the following result.
\begin{proposition}
The map $\hat{\tr} : \sdc(N,M)^Q \rightarrow H^2(Q,M^N)$ is a homomorphism.
\end{proposition}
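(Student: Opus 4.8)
The plan is to deduce this proposition directly from the categorical machinery set up in the preceding section, combining Lemma \ref{homomorphism_of_group_objects} (natural transformations between product-preserving functors are homomorphisms on group objects) with Lemma \ref{nat_trans} (that $\hat{\tr}$ is a natural transformation). The key point is that $M \mapsto M^N$ and $M \mapsto \sdc(N,M)$, restricted to the relevant subfunctors, still preserve products and final objects, so the group-object structure on a $G$-module $M$ (coming from $+$, the zero map, and negation) is carried over to both $\sdc(N,M)^Q$ and $H^2(Q,M^N)$, and $\hat{\tr}$ must respect it.

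First I would verify that the functor $\sdc(N,-)^Q$ preserves products and the final object. By Lemma \ref{sdc_preserves_products}, $\sdc(N,-)$ already does, and the canonical iso $\sdc(N,M_1 \times M_2) \cong \sdc(N,M_1) \times \sdc(N,M_2)$ is $G$-equivariant for the diagonal $G$-action (this is the remark in the excerpt about the diagonal action); hence a partial semi-direct complement of $N$ in $M_1\times M_2$ satisfies ${}^{(m,g)}H\sim H$ for all $(m,g)$ precisely when both of its components do, which says exactly that the iso restricts to $\sdc(N,M_1\times M_2)^Q \cong \sdc(N,M_1)^Q \times \sdc(N,M_2)^Q$. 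The final object $0$ has $\sdc(N,0)$ a one-point set, trivially $Q$-fixed, so $\sdc(N,-)^Q$ sends it to the final object. Likewise $H^2(Q,-)$ preserves products and sends $0$ to $0$, and $M \mapsto M^N$ is left exact hence product-preserving and fixes $0$, so the composite $H^2(Q,-^N)$ preserves products and final objects.

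Next I would invoke Lemma \ref{preserving_group_objects}: since both $\sdc(N,-)^Q$ and $H^2(Q,-^N)$ preserve products and final objects, they preserve group objects, so for the group object $M$ (in $G$-modules) the sets $\sdc(N,M)^Q$ and $H^2(Q,M^N)$ acquire canonical group-object structures in $\mathbf{Set}$ — and, as noted at the end of section \ref{hom_point_view}, the induced group law on $\sdc(N,M)^Q$ is exactly $\sdc(1,+)$, i.e.\ the group law of section \ref{der_sec_sdc}; similarly the one on $H^2(Q,M^N)$ is the usual addition. Then by Lemma \ref{homomorphism_of_group_objects}, applied to the functors $F = \sdc(N,-)^Q$ and $G = H^2(Q,-^N)$ and the natural transformation $a = \hat{\tr}$ (Lemma \ref{nat_trans}), the component $\hat{\tr}_M = \hat{\tr}: \sdc(N,M)^Q \rightarrow H^2(Q,M^N)$ is a homomorphism of group objects, i.e.\ an additive homomorphism, which is the claim.

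The only genuine content — and the step I expect to require the most care — is the compatibility of the $Q$-fixed-point condition with products, namely that ${}^{(m,g)}H\sim H$ for all $(m,g)$ in $M_1\times M_2$ iff the projections of $H$ to $\sdc(N,M_i)$ each satisfy the analogous condition. This hinges on the $G$-equivariance of the splitting iso $\sdc(N,M_1\times M_2)\cong\prod_i\sdc(N,M_i)$ together with the explicit formula ${}^{g}H=(0,g)H(0,g)^{-1}$ for the action, and on the fact that the equivalence relation $\sim$ on $\sdc(N,M_1\times M_2)$ is likewise the "product" of the relations on the factors (conjugation by $i_0(m_1,m_2)$). Everything else is a routine bookkeeping of which functors are left exact or additive.
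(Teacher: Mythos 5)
Your proposal is correct and follows essentially the same route as the paper: the paper likewise notes that $H^n(Q,-)$ and $(-)^N$ preserve products and the final object, that the $G$-action on $\sdc(N,M_1\times M_2)\cong\sdc(N,M_1)\times\sdc(N,M_2)$ is diagonal (so the $Q$-invariance condition splits across factors), and then concludes by combining Lemma \ref{sdc_preserves_products}, Lemma \ref{preserving_group_objects}, Lemma \ref{homomorphism_of_group_objects}, and Lemma \ref{nat_trans}. You simply spell out in more detail the product-compatibility of the fixed-point condition, which the paper leaves implicit.
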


To see that $\hat{\tr}$ induces a well-defined map $\tr:H^1(N,M)^Q \rightarrow H^2(Q,M^N)$, we need the following lemma. 
\begin{lemma} \label{lemma2}
Given $H \in \sdc(N,M)^Q$. There exists an $\widetilde{H} \in \sdc (G,M)$ such that $\widetilde{H} \cap (M \rtimes N) = H$ iff $\hat{\tr}(H)=0$.
\end{lemma}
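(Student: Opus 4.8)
The plan is to prove the two implications separately, in both cases unwinding what it means for $\hat{\tr}(H) = \omega(\underline{e}_0, H)$ to be the trivial class in $H^2(Q, M^N)$, namely that the extension $\underline{e}' : 0 \to M^N \to N_{M\rtimes G}(H)/H \to Q \to 1$ splits.

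First I would prove the "only if" direction. Suppose $\widetilde{H} \in \sdc(G,M)$ satisfies $\widetilde{H} \cap (M \rtimes N) = H$. The idea is that $\widetilde{H}$, being a semi-direct complement of $G$ in $M \rtimes G$, is an honest copy of $G$ inside $M \rtimes G$, and $H = \widetilde{H} \cap (M \rtimes N)$ is exactly the copy of $N$ sitting inside it; since $N \trianglelefteq G$, this copy $H$ is normal in $\widetilde{H}$, so $\widetilde{H} \leq N_{M\rtimes G}(H)$. I would then check that the composite $\widetilde{H} \hookrightarrow N_{M\rtimes G}(H) \twoheadrightarrow N_{M\rtimes G}(H)/H$ descends to a homomorphism $Q \cong \widetilde{H}/H \to N_{M\rtimes G}(H)/H$ which is a set-theoretic section of $\overline{p}$, i.e. a splitting of $\underline{e}'$. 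The only point needing a line of verification is that $p_0|_{\widetilde{H}}$ carries $H$ onto $N$ (which follows from $H = \widetilde{H} \cap (M\rtimes N)$ and surjectivity of $p_0|_{N_{M\rtimes G}(H)}$, using Lemma \ref{lem:surjective_invariant} since $H \in \sdc(N,M)^Q$), so that $\widetilde{H}/H$ really is $Q$ and the section is well-defined. Hence $\hat{\tr}(H) = 0$.

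For the converse, suppose $\hat{\tr}(H) = 0$, so $\underline{e}'$ splits: there is a homomorphic section $\sigma : Q \to N_{M\rtimes G}(H)/H$ of $\overline{p}$. The plan is to lift $\sigma$ back to a subgroup of $N_{M\rtimes G}(H)$ and recombine it with $H$. Concretely, let $\widetilde{H}$ be the preimage in $N_{M\rtimes G}(H)$ of the image subgroup $\sigma(Q) \leq N_{M\rtimes G}(H)/H$; then $\widetilde{H}$ is a subgroup of $M \rtimes G$ containing $H$ as a normal subgroup with $\widetilde{H}/H \cong \sigma(Q) \cong Q$. I would then verify that $\widetilde{H} \in \sdc(G,M)$, i.e. that $p_0|_{\widetilde{H}} : \widetilde{H} \to G$ is an isomorphism: surjectivity holds because $p_0|_{N_{M\rtimes G}(H)}$ surjects onto $G$ (Lemma \ref{lem:surjective_invariant}) and $\overline{p}\circ\sigma = \mathrm{id}_Q$ means $\widetilde{H}$ hits every coset of $N$, while $H \leq \widetilde{H}$ covers $N$; injectivity holds because $\ker(p_0|_{N_{M\rtimes G}(H)}) = M^N$ (Lemma \ref{lemma_1}) maps onto $\ker\overline{p} = M^N/(M^N\cap H)$ — here one uses $M^N \cap H = 0$, which holds since $H \cong N$ via $p_0$ and $M^N \subseteq \ker p_0$ — and $\sigma$ being a section forces $\widetilde{H} \cap M^N$ to map trivially into $\sigma(Q)$, hence to lie in $H$, hence to be $0$. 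Finally $\widetilde{H} \cap (M\rtimes N) = H$: the inclusion $\supseteq$ is clear, and $\subseteq$ follows because an element of $\widetilde{H}$ lying over $N = p_0(M\rtimes N \cap \cdots)$ must, modulo $H$, lie in $\sigma(Q) \cap \ker\overline{p} = 0$.

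The main obstacle I expect is bookkeeping the four groups $M^N \hookrightarrow M$, $H \hookrightarrow \widetilde{H}$, $N \hookrightarrow G$, $N_{M\rtimes G}(H) \twoheadrightarrow N_{M\rtimes G}(H)/H$ simultaneously and keeping straight which intersections vanish; in particular the identification $\widetilde{H}/H \cong Q$ in both directions, and the repeated use of Lemmas \ref{lemma_1} and \ref{lem:surjective_invariant} to control kernel and image of $p_0$ restricted to $N_{M\rtimes G}(H)$, is where the argument must be carried out carefully. Everything else is the universal-property/five-lemma style reasoning already used in the remarks on the construction of $\omega$, together with the elementary fact that an extension of $Q$ by an abelian group is trivial in $H^2$ precisely when it admits a homomorphic section.
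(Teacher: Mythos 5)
Your proposal is correct and follows essentially the same route as the paper: in one direction you observe that $H=\widetilde{H}\cap(M\rtimes N)$ is normal in $\widetilde{H}$, so $\widetilde{H}\leq N_{M\rtimes G}(H)$ furnishes a splitting of the relevant extension, and in the other you take the preimage in $N_{M\rtimes G}(H)$ of a complement of $M^N$ in $N_{M\rtimes G}(H)/H$. The only difference is that you spell out the verifications (surjectivity and injectivity of $p_0|_{\widetilde{H}}$, and $\widetilde{H}\cap(M\rtimes N)=H$) that the paper leaves to the reader, and these checks are carried out correctly.
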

\begin{proof}
Suppose that $ H = \widetilde{H} \cap (M \rtimes N)$. Since $M \rtimes N$ is normal in $M \rtimes G$, $H$ is normal in $\widetilde{H}$, so $N_{M \rtimes G}(H)$ contains the semi-direct complement $\widetilde{H}$ of $G$. This means that the short exact sequence
\[\xymatrix{0 \ar[r] & M^N \ar[r] & N_{M \rtimes G}(H) \ar[r] & G \ar[r] & 1}\]
splits, and consequently, $\hat{\tr}(H)=0$. Conversely, choose a semi-direct complement $\widetilde{Q} \subseteq N_{M \rtimes G}(H)/H$ of $Q$ and take the inverse image under $q : N_{M \rtimes G}(H) \rightarrow N_{M \rtimes G}(H)/H$. It is not difficult to prove that this is the required semi-direct complement $\widetilde{H}$ in $\sdc(G,M)$.
\end{proof}
There are some immediate consequences of this lemma.
\begin{corollary}
The map $\tr: [H] \mapsto \hat{\tr}(H)$ is well-defined.
\end{corollary}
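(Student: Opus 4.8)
The plan is to show that if $[H_1] = [H_2]$ in $H^1(N,M)^Q$, then $\hat{\tr}(H_1) = \hat{\tr}(H_2)$, which makes $\tr$ well-defined on equivalence classes. By definition of the quotient map $\sdc(N,M) \to H^1(N,M)$, the classes $[H_1]$ and $[H_2]$ agree precisely when $H_1 \sim H_2$, i.e.\ when $H_2 = i_0(m) H_1 i_0(m)^{-1}$ for some $m \in M$, where $i_0 : M \to M \rtimes G$. Since $\hat{\tr}$ is a homomorphism by the preceding proposition, it suffices to prove the special case: if $H \sim 0$ (the trivial semi-direct complement $0 \rtimes N$, which represents the zero element of $\sdc(N,M)$), then $\hat{\tr}(H) = 0$. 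Equivalently, $\hat{\tr}$ kills the subgroup of $\sim$-trivial elements, hence factors through $\sdc(N,M)^Q / \!\sim$, which is exactly $H^1(N,M)^Q$.

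So the main step is to verify $\hat{\tr}(H) = 0$ whenever $H = i_0(m)(0 \rtimes N)i_0(m)^{-1}$ for some $m \in M$. Here I would invoke Lemma \ref{lemma2}: it is enough to exhibit some $\widetilde{H} \in \sdc(G,M)$ with $\widetilde{H} \cap (M \rtimes N) = H$. The obvious candidate is $\widetilde{H} = i_0(m)(0 \rtimes G)i_0(m)^{-1}$, the conjugate of the standard splitting of $G$ by the same element $m$. One then checks that intersecting this with $M \rtimes N$ gives back $H$: since $0 \rtimes N = (0\rtimes G) \cap (M \rtimes N)$ and conjugation by $i_0(m) \in M \rtimes N$ preserves the normal subgroup $M \rtimes N$, we get $\widetilde{H} \cap (M \rtimes N) = i_0(m)\bigl((0\rtimes G)\cap(M\rtimes N)\bigr)i_0(m)^{-1} = i_0(m)(0\rtimes N)i_0(m)^{-1} = H$. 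This is a routine computation using normality of $M \rtimes N$ in $M \rtimes G$.

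Putting it together: given $H_1 \sim H_2$ in $\sdc(N,M)^Q$, the difference $H_1 - H_2$ (in the abelian group structure on $\sdc(N,M)$) lies in the $\sim$-trivial subgroup, so is of the form $i_0(m)(0\rtimes N)i_0(m)^{-1}$; by the previous paragraph and Lemma \ref{lemma2}, $\hat{\tr}(H_1 - H_2) = 0$, and since $\hat{\tr}$ is a homomorphism, $\hat{\tr}(H_1) = \hat{\tr}(H_2)$. Hence $\tr([H]) := \hat{\tr}(H)$ does not depend on the representative.

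I do not anticipate a serious obstacle here — the corollary is essentially immediate from Lemma \ref{lemma2} together with the fact that $\hat{\tr}$ is a homomorphism. The only mild subtlety is making sure the $\sim$-trivial elements of $\sdc(N,M)$ are exactly the conjugates of the zero complement by elements of $M$ (rather than $M \rtimes N$), but this is precisely the definition of $\sim_M$ recalled in section \ref{der_sec_sdc}, and such an element does lie in $M \rtimes N$, so the argument in the second paragraph goes through verbatim.
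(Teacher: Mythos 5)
Your argument is correct and is essentially the paper's own: the paper likewise reduces, via the homomorphism property of $\hat{\tr}$ and Lemma \ref{lemma2}, to the observation that every $\sim$-trivial complement ${}^{i_0(m)}H_0$ extends to the semi-direct complement ${}^{i_0(m)}\widetilde{H}_0$ of $G$ in $M\rtimes G$, where $\widetilde H_0=\{(0,g)\mid g\in G\}$. Your writeup just spells out the routine intersection computation that the paper leaves implicit.
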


\begin{proof}
We can extend every semi-direct complement of the form ${}^{i(m)}H_0$ to the semi-direct complement ${}^{i(m)}\widetilde{H}_0$ of $M \rtimes G$, where $H_0 = \{(0,n) \in M\rtimes G \ | \ n \in N\}$ and $\widetilde{H}_0 = \{(0,g) \in M \rtimes G \ | \ g \in G\}$.
\end{proof}

\begin{corollary}
The sequence 
\[\xymatrix{0 \ar[r] & H^1(Q,M^N) \ar[r]^-{\infl} & H^1(G,M) \ar[r]^-{\res} & H^1(N,M)^Q \ar[r]^-{\tr} & H^2(Q,M^N)}\]
is exact. 
\end{corollary}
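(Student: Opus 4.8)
The claim is the exactness of the four-term sequence
\[\xymatrix{0 \ar[r] & H^1(Q,M^N) \ar[r]^-{\infl} & H^1(G,M) \ar[r]^-{\res} & H^1(N,M)^Q \ar[r]^-{\tr} & H^2(Q,M^N)},\]
and the plan is to check exactness at each of the four positions separately. The leftmost part, namely exactness of
\[\xymatrix{0 \ar[r] & H^1(Q,M^N) \ar[r]^-{\infl} & H^1(G,M) \ar[r]^-{\res} & H^1(N,M)^Q}\]
(injectivity of $\infl$ and $\Image(\infl)=\Ker(\res)$), is the classical five-term exact sequence of Lyndon--Hochschild--Serre truncated in degree one; this is standard and I would simply cite it (e.g.\ \cite{brow82-1}), or reprove it quickly using the derivation picture of section~\ref{der_sec_sdc}: a derivation $G\to M$ restricting to an inner derivation of $N$ can be corrected by an inner derivation of $G$ so that it vanishes on $N$, hence factors through $Q$ with values in $M^N$. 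What really needs proof is exactness at $H^1(N,M)^Q$, i.e.\ $\Image(\res)=\Ker(\tr)$.

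For this I would work entirely with semi-direct complements, exploiting lemma~\ref{lemma2}. First note that the image of $\res:H^1(G,M)\to H^1(N,M)^Q$ has a transparent description on the $\sdc$ level: on representatives it sends the class of a semi-direct complement $\widetilde H\in\sdc(G,M)$ to the class of $\widetilde H\cap(M\rtimes N)\in\sdc(N,M)^Q$ (this is precisely $\sdc(j,1)$ for $j:N\hookrightarrow G$, and one checks the intersection lands in $\sdc(N,M)^Q$ because $\widetilde H$ is stable under conjugation by all of $M\rtimes G$ up to $M$-equivalence, indeed it is a full complement). So a class $[H]\in H^1(N,M)^Q$ lies in $\Image(\res)$ if and only if \emph{some} representative $H'\sim H$ extends to a full semi-direct complement $\widetilde H$ of $G$ in $M\rtimes G$ with $\widetilde H\cap(M\rtimes N)=H'$. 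Now lemma~\ref{lemma2}, applied to $H$ (and noting that $\hat{\tr}$ is constant on $\sim$-classes so $\hat{\tr}(H)=\hat{\tr}(H')$), says exactly that such an extension exists for the given $H$ itself if and only if $\hat{\tr}(H)=0$, i.e.\ $\tr([H])=0$. Combining: $\tr([H])=0\iff H$ extends to a full complement $\iff [H]\in\Image(\res)$. This gives exactness at $H^1(N,M)^Q$.

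For exactness at $H^2(Q,M^N)$ we must show $\Ker(\infl:H^2(Q,M^N)\to H^2(G,M))=\Image(\tr)$. One inclusion, $\Image(\tr)\subseteq\Ker(\infl)$, should come from chasing the commutative diagram displayed just before lemma~\ref{nat_trans}: there $E$ (the standard split extension $M\rtimes G$, since we compute $\hat{\tr}$ via $\underline e_0$) is exhibited as the push-out of $j:M^N\hookrightarrow M$ along $i:M^N\to N_{M\rtimes G}(H)$, so by the push-out description of induced maps from section~\ref{pb_po} the class $j_*\bigl(\hat{\tr}(H)\bigr)\in H^2(G,M)$ is represented by $\underline e_0$, which is split, hence zero; and $j_*$ composed with $\infl$ on $H^2$ is $\infl$ on the level needed (one identifies $j_*$ with the natural map $H^2(Q,M^N)\to H^2(Q,M)$ followed by $\infl$, so the composite $\infl\circ\tr$ is $j_*$ after inflation)—more cleanly, the three-row diagram before lemma~\ref{nat_trans} directly shows $\infl([\underline e'])=[\underline e_0]=0$ via the pull-back/push-out factorisations. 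For the reverse inclusion $\Ker(\infl)\subseteq\Image(\tr)$: given a $2$-cocycle class in $H^2(Q,M^N)$ killed by $\infl$, pull it back to a group extension $\underline u:0\to M^N\to U\to Q\to 1$, form the pull-back $P$ of $U\to Q$ and $\pi:G\to Q$, push $P$ out along $M^N\hookrightarrow M$ to get an extension $\underline e$ of $G$ by $M$ whose class is $\infl$ of the given one, hence trivial, so $\underline e\cong\underline e_0$; inside $\underline e_0$ the kernel $\Ker(P\to G)=1$ gives (the image of) a partial semi-direct complement $H$ of $N$ with $N_E(H)/H\cong U$, whence $\hat{\tr}(H)$ is the original class. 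I would present this as: every element of $\Ker(\infl)$ arises as $\omega(\underline e_0,H)$ for a suitable $H$, using the pull-back and push-out constructions of section~\ref{pb_po} to build $H$ from a representing extension of $Q$ by $M^N$.

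\textbf{Main obstacle.} The delicate point is the reverse inclusion at $H^2(Q,M^N)$, i.e.\ realising an arbitrary class in $\Ker(\infl)$ as $\tr$ of something: one must reconstruct, from a group extension of $Q$ by $M^N$ that becomes trivial after inflation to $G$, an honest partial semi-direct complement $H$ living in $M\rtimes G$ such that $N_{M\rtimes G}(H)/H$ recovers that extension. This requires carefully tracking the pull-back of the $Q$-extension along $\pi$, its push-out along $M^N\hookrightarrow M$, the chosen isomorphism with $\underline e_0$ coming from triviality of the inflated class, and the kernel of the pulled-back projection as a subgroup of $M\rtimes G$—and then verifying it is $Q$-invariant up to $M$-equivalence and that $\omega$ returns the class we started from. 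By contrast, exactness at $H^1(N,M)^Q$ is essentially immediate from lemma~\ref{lemma2} once the $\sdc$-description of $\res$ is in place, and the left five-term part is classical.
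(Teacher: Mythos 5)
Your treatment of the actual content of the corollary is correct and is essentially the paper's argument: the truncated five-term exactness at $H^1(Q,M^N)$ and $H^1(G,M)$ is classical, and exactness at $H^1(N,M)^Q$ falls out of Lemma~\ref{lemma2} once one observes that $\res$ on the $\sdc$ level is $\widetilde H\mapsto\widetilde H\cap(M\rtimes N)$ and that $\hat{\tr}$ is constant on $\sim$-classes; the paper indeed records this corollary as an ``immediate consequence'' of that lemma. Note, however, that your fourth item is not part of the statement: the displayed sequence terminates at $H^2(Q,M^N)$, so no exactness condition is imposed there, and the identity $\Ker(\infl)=\Image(\tr)$ that you sketch is precisely what the paper establishes separately afterwards via Lemma~\ref{lemma3} (incidentally, in that sketch the kernel you want is $\Ker(P\to E_0)\cong N$, not $\Ker(P\to G)$, whose kernel is $M^N$). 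So you have proved the corollary by the same route, plus an unrequested extra step that belongs to the next stage of the paper.
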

We will show that $\tr:H^1(N,M)^Q \rightarrow H^2(Q,M^N)$ is a map that fits in the five-term exact sequence. To complete the proof of exactness, we need the following lemma.
\begin{lemma} \label{lemma3}
Let $\underline{e}: \xymatrix{0 \ar[r] & M \ar[r]^i & E \ar[r]^p & G \ar[r]& 1}$ be an extension of $G$ by $M$. 
\begin{itemize}
\item If $\underline{e}$ is partially split and there exists a partial semi-direct complement $H$ of $N$ in $\underline{e}$ with $^{e}H \sim H$ for all $e \in E$, then $\infl(\omega(\underline{e},H))=[\underline{e}]$. 
\item Conversely, if there exists an $[\underline{e'}] \in H^2(Q,M^N)$ such that $[\underline{e}]=\infl[\underline{e'}]$, then there exists a partial semi-direct complement $H$ of $N$ in $\underline{e}$ with $^{e}H \sim H$ for all $e \in E$, such that  $[\underline{e'}]=\omega(\underline{e},H)$.
\end{itemize}
\end{lemma}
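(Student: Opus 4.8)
The two items say that the image of $\omega$ under $\infl$ is exactly the image of $\infl: H^2(Q,M^N)\to H^2(G,M)$, tracked extension by extension. The plan is to exploit the two structural identifications already established in the "remarks about the construction of $\omega$'': namely that $E$ is the push-out of $j:M^N\hookrightarrow M$ along $i:M^N\to N_E(H)$, and that $N_E(H)$ is the pull-back of $\overline p:N_E(H)/H\to Q$ and $\pi:G\to Q$. Recall also from section \ref{pb_po} that the pull-back construction computes $\pi^*$ on $H^2$ and the push-out construction computes $j_*$ on $H^2$; and that $\infl:H^2(Q,M^N)\to H^2(G,M)$ is, by definition/standard identification, the composite $j_*\circ\pi^*$ where $\pi^*:H^2(Q,M^N)\to H^2(G,M^N)$ is pull-back along $\pi$ and $j_*:H^2(G,M^N)\to H^2(G,M)$ is push-forward along $j:M^N\hookrightarrow M$. (If the excerpt has not literally stated $\infl=j_*\pi^*$, I would insert one line recalling this from the definition of inflation via the restriction of scalars.)

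For the first item: start from the pair $(\underline e,H)\in\Omega$, form the exact sequence $\underline e': 0\to M^N\to N_E(H)/H\to Q\to 1$, so $[\underline e']=\omega(\underline e,H)$. By the pull-back identification, $N_E(H)$ represents the class $\pi^*[\underline e']\in H^2(G,M^N)$, i.e. the extension $0\to M^N\to N_E(H)\to G\to 1$ has class $\pi^*\omega(\underline e,H)$. By the push-out identification, pushing this extension out along $j:M^N\hookrightarrow M$ yields an extension whose class is $j_*\pi^*\omega(\underline e,H)=\infl(\omega(\underline e,H))$, and by the remarks that push-out is precisely (isomorphic to) $\underline e$. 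Hence $\infl(\omega(\underline e,H))=[\underline e]$. This direction is essentially a bookkeeping assembly of facts already in hand.

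For the converse (the harder item, and the main obstacle): we are given $[\underline e]=\infl[\underline e']$ for some $[\underline e']\in H^2(Q,M^N)$ and must manufacture $H$. The idea is to run the previous construction backwards. Realize $\infl[\underline e']=j_*\pi^*[\underline e']$: let $\underline f: 0\to M^N\to F\to G\to 1$ be the pull-back of (a chosen representative of) $\underline e'$ along $\pi$, so $F$ contains a copy of $H$, namely the image of $N$ under the canonical section coming from the pull-back over the kernel (the pull-back of $0\to M^N\to (\text{rep. of }\underline e')\to Q\to1$ along $\pi|_N:N\to 1$ splits, giving $H\cong N$ inside $F$ with $H$ normal in $F$ since it is the preimage of the trivial subgroup of $N_E(H)/H$... more precisely, $H=\ker(F\to Q)$ restricted appropriately). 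Then the push-out of $\underline f$ along $j:M^N\hookrightarrow M$ is an extension of $G$ by $M$ representing $[\underline e]$, so it is equivalent to $\underline e$; fix such an equivalence and transport $H\subseteq F$ through the push-out map $F\to E$ (noting $j_*$ on the $E_1=F$ factor is injective on $F/M^N$, so $H$ maps isomorphically onto a subgroup of $E$). This produces a candidate partial semi-direct complement $H\leq E$; one then checks $p(H)=N$, $p|_H$ iso, and that $H$ is normalized up to $\sim$ by all of $E$ (because after push-out the normalizer $N_E(H)$ surjects onto $G$, using Lemma \ref{lem:surjective_invariant}), and finally that $N_E(H)/H$ recovers $\underline e'$ so $\omega(\underline e,H)=[\underline e']$. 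The delicate points to get right here are: (a) the push-out along $j$ does not collapse $H$ (injectivity of $F\to E$ on the relevant subgroup — this uses that $S$ in the push-out meets the $E_1$-factor only in $i_1(M_1)$ with $M_1=M^N$, $M_2=M$, and $H\cap M^N=1$); (b) verifying ${}^eH\sim H$ for all $e\in E$ — equivalently $p|_{N_E(H)}$ surjective — which should fall out of the fact that in $F$ the analogous normalizer surjects onto $G$ since $H=\ker(F\to Q)$ is normal in $F$, and this surjectivity is preserved by the push-out; (c) checking that the equivalence of extensions chosen to identify the push-out with $\underline e$ is compatible with the construction of $\omega$, i.e. that $\omega$ only depends on the pair up to the evident isomorphisms. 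I expect (b) and the compatibility in (c) to consume most of the work; both are "easily checked" in the style of the surrounding lemmas but require care with the explicit push-out presentation $E=(M\rtimes F)/S$.
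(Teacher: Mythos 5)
Your plan is correct and follows essentially the same route as the paper: factor $\infl$ as push-out along $M^N\hookrightarrow M$ after pull-back along $\pi$, deduce the first item from the identifications of $E$ and $N_E(H)$ in the remarks after the definition of $\omega$, and for the converse locate the canonical copy of $N$ inside the pull-back $P$ (the kernel of $P\to E_0$, not of $P\to Q$ --- your parenthetical wobbles here but your ``canonical section over the kernel'' description is the right one), push it into $E$, and verify the required properties via Lemma \ref{lem:surjective_invariant} and Lemma \ref{lemma_1} with the five-lemma. The delicate points you flag (non-collapse of $H$ under the push-out, surjectivity of $p|_{N_E(H)}$, and recovering $\underline{e}'$ from $N_E(H)/H$) are exactly the ones the paper's proof addresses.
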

\begin{proof}
The inflation map $\infl:H^2(Q,M^N) \rightarrow H^2(G,M)$ is the composition of the maps $H^2(Q,M^N) \rightarrow H^2(G,M^N)$ and $H^2(G,M^N) \rightarrow H^2(G,M)$, induced by respectively the projection map $\pi: G \rightarrow Q$ and the inclusion map $M^N \hookrightarrow M$. Using section \ref{pb_po}, we see that for any extension $\underline{e}'$ of $M^N$ and $Q$, we can represent $\infl(\underline{e}')$  by the lower row in the diagram
\begin{equation}\label{double_diagram}\xymatrix{\ \ \ \ \ \underline{e}' : 0 \ar[r] & M^N \ar[r]^{i_0} & E_0 \ar[r]^{p_0} & Q \ar[r] & 1\\
\ \ \ \ \ \ \ \ \ \ 0 \ar[r] & M^N \ar[r]^{i'} \ar@{=}[u] \ar[d]^{j} & P \ar[u] \ar[d] \ar[r]^{p'} & G \ar[r] \ar@{=}[d] \ar[u]^{\pi}& 1\\
\infl(\underline{e}') : 0 \ar[r] & M \ar[r]^{i}  & E \ar[r]^{p} & G \ar[r] & 1.}\end{equation}
Here $P$ is the pull-back of $p_0$ and $\pi$, and $E$ is the push-out construction of $i'$ and $j$.

Now it follows from the remarks after the definition of $\omega$ that $\infl(\omega(\underline{e},H))=[\underline{e}]$ when $\underline{e}$ is partially split and $H$ is a partial semi-direct complement of $N$ in $\underline{e}$ with ${}^e H \sim H$. This proves the first part of the lemma. 

For the second part, suppose that $[\underline{e}]=\infl[\underline{e}']$, where 
\[\underline{e}' : \xymatrix{0 \ar[r] & M^N \ar[r]^{i_0} & E_0 \ar[r]^{p_0} & Q \ar[r] & 1.}\] We show that we can find a partial semi-direct complement $H$ of $N$ in $\underline{e}$ with ${}^e H \sim H$ for all $e \in E$, such that the extensions $\underline{e'}$ and the canonical representative of $\omega(\underline{e}, H)$ are equivalent. 
Since $[\underline{e}]=\infl[\underline{e}']$, we have again a diagram (\ref{double_diagram}) 
where the upper right square is a pull-back diagram and the lower left square is a push-out construction. 
Using properties of the pull-back (see section \ref{pb_po}), we see that the short exact sequence $\xymatrix{1 \ar[r] & N \ar[r]^j & G \ar[r]^{\pi} & Q \ar[r] & 1}$ 
induces a short exact sequence 
$\xymatrix{1 \ar[r] & N \ar[r]^{\alpha} & P \ar[r]^{h} & E_0 \ar[r] & 1}.$ 
Define $\widetilde{H}=\alpha(N) \lhd P$ and observe that $h$ induces an isomorphism $\overline{h}$ of extensions as follows.
\[\xymatrix{0 \ar[r] & M^N \ar[r]^{i_0} \ar@{=}[d]  & E_0  \ar[r]^{p_0} & Q \ar@{=}[d] \ar[r] & 1\\
0 \ar[r] & M^N \ar[r]^{\overline{i'}}  & P / \widetilde{H} \ar[u]^{\overline{h}}_{\cong} \ar[r]^{\overline{p'}} & Q \ar[r] & 1}\] 
If we set $H = \gamma(\widetilde{H}) \subset E$, we see that $p(H)=
j(N)$, and a similar argument shows that $p|H : H \rightarrow N$ is an isomorphism. 
Therefore $H$ is a partial semi-direct complement of $N$ in $\underline{e}$ and $\underline{e}$ is partially split. Observe also that $\gamma(P) \subset N_E(H)$, so $p(N_E(H))=G$. From lemma \ref{lem:surjective_invariant} it follows that ${}^e H \sim H$ for all $e \in E$. 
 
We can use lemma \ref{lemma_1} and the five-lemma to see that $\gamma$ is an isomorphism of extensions 
\[\xymatrix{0 \ar[r] & M^N \ar[r]^{i'} \ar@{=}[d]  & P \ar[d]^{\gamma}_{\cong} \ar[r]^{p'} & G \ar@{=}[d] \ar[r] & 1\\
0 \ar[r] & M^N \ar[r]^{i}  & N_E(H)  \ar[r]^{p} & G \ar[r] & 1.}\] 
Since $\gamma(\widetilde{H})=H$, the map induces an isomorphism $\overline{\gamma}: P / \widetilde{H} \rightarrow N_E(H)/H$, 
such that there is an equivalence of extensions 
\[\xymatrix{0 \ar[r] & M^N  \ar@{=}[d] \ar[r]^{i_0} & E_0 \ar[d]^{\overline{\gamma} \circ \overline{h}^{-1}}_{\cong} \ar[r]^{p_0} & Q \ar@{=}[d]\ar[r] & 1\\
0 \ar[r] & M^N \ar[r] & N_E(H) / H \ar[r] & Q \ar[r] & 1.}\] The lower extension represents $\omega(\underline{e},H)$, so this proves that $[\underline{e}']=\omega(\underline{e},H)$.
\end{proof}

This proves the exactness of the sequence 
\[\xymatrix{0 \ar[r] & H^1(Q,M^N) \ar[r]^-{\infl} & H^1(G,M) \ar[r]^-{\res} & H^1(N,M)^Q \ar[r]^-{\tr} & H^2(Q,M^N)\ar[r]^-{\infl} & H^2(G,M)},\] since $\tr[H]=\omega(\underline{e}_0,H)$ and $\infl(\underline{e}_0)=0$. 
Furthermore, we have the following corollaries, which are important for the next section. 
\begin{corollary}\label{cor:im_in_1}
The image of $\infl$ is contained in $H^2(G,M)_1$.
\end{corollary}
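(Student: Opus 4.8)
The plan is to obtain this as an immediate consequence of the second bullet of Lemma~\ref{lemma3}. I would take a class $c\in\Image(\infl)$, write $c=\infl[\underline{e}']$ for some extension $\underline{e}'$ of $Q$ by $M^N$, and then pick any extension $\underline{e}\colon 0\to M\to E\xrightarrow{p}G\to 1$ representing $c$ in $H^2(G,M)$. Lemma~\ref{lemma3} then supplies a partial semi-direct complement $H$ of $N$ in $\underline{e}$, and in particular asserts that $\underline{e}$ is partially split. But ``partially split'' is, by the definition given just after extension~(\ref{extension}), exactly the statement that the pulled-back sequence $0\to M\to p^{-1}(N)\to N\to 1$ splits, hence that $\res[\underline{e}]=0$ in $H^2(N,M)$. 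That is, $c\in H^2(G,M)_1$, which is what we want.

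A second, self-contained route that avoids the machinery of $\omega$ entirely is worth recording in case one wants the corollary independently of Lemma~\ref{lemma3}. Here I would factor $\infl\colon H^2(Q,M^N)\to H^2(G,M)$ as $\pi^{*}$ followed by the coefficient map induced by the inclusion $M^N\hookrightarrow M$, exactly as recalled in the proof of Lemma~\ref{lemma3}. Since $M^N\hookrightarrow M$ is at once a morphism of $G$-modules and of $N$-modules, restriction to $N$ commutes with it, so $\res\circ\infl$ equals $(M^N\hookrightarrow M)_{*}$ applied to $\res\circ\pi^{*}$. The latter map $\res\circ\pi^{*}\colon H^2(Q,M^N)\to H^2(N,M^N)$ is inflation along the composite homomorphism $N\hookrightarrow G\xrightarrow{\pi}Q$, which is the constant homomorphism, so it factors through $H^2(\{1\},M^N)=0$ and is therefore zero; hence $\res\circ\infl=0$.

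I do not expect a genuine obstacle here: the mathematical content is entirely in Lemma~\ref{lemma3} (or, in the alternative, in the naturality of restriction and the vanishing of $H^2$ of the trivial group). The only points requiring a little care are bookkeeping ones — making sure one invokes the correct inflation (from $Q$, not from $N$) and the correct restriction (to $N$, with coefficients in $M$ after the inclusion $M^N\hookrightarrow M$), and recalling that membership in $H^2(G,M)_1$ was \emph{defined} to mean partial splitness, so that no further argument is needed once $\underline{e}$ is known to be partially split.
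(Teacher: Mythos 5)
Your first argument is exactly the paper's intended derivation: the corollary is stated immediately after Lemma~\ref{lemma3}, whose second bullet shows that any extension $\underline{e}$ with $[\underline{e}]=\infl[\underline{e}']$ is partially split, which by definition means $[\underline{e}]\in H^2(G,M)_1$. Your second, self-contained route (factoring $\infl$ through $\pi^*$ and noting that $\res\circ\pi^*$ is induced by the trivial homomorphism $N\to Q$, hence factors through $H^2(\{1\},M^N)=0$) is also correct and is the standard elementary argument the paper alludes to with ``it is known that\dots''.
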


\begin{corollary}\label{cor:image_inf}
Let $[\underline{e}] \in H^2(G,M)_1$. Now $[\underline{e}] \in \Image \infl$ iff there exists a partial semi-direct complement $H$ of $N$ in $\underline{e}$ with ${}^e H \sim H$ for all $e \in E$ (or equivalently, a partial splitting $s : N \rightarrow E$ of $\underline{e}$ over $N$ with ${}^e s \sim s$ for all $e \in E$).
\end{corollary}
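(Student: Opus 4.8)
The plan is to deduce this corollary almost immediately from Lemma \ref{lemma3}, after recording the dictionary between partial semi-direct complements and partial splittings set up earlier in this section.

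First I would handle the forward implication. Suppose $[\underline{e}] \in \Image\infl$, say $[\underline{e}] = \infl[\underline{e}']$ for some $[\underline{e}'] \in H^2(Q,M^N)$. Since $[\underline{e}] \in H^2(G,M)_1$, the extension $\underline{e}$ is partially split, so the second bullet of Lemma \ref{lemma3} applies directly and yields a partial semi-direct complement $H$ of $N$ in $\underline{e}$ with ${}^e H \sim H$ for all $e \in E$ (that lemma even identifies $[\underline{e}']$ with $\omega(\underline{e},H)$, but only the existence of $H$ is needed here).

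For the converse I would assume $\underline{e}$ is partially split --- which is exactly what $[\underline{e}] \in H^2(G,M)_1$ provides --- together with a partial semi-direct complement $H$ of $N$ in $\underline{e}$ satisfying ${}^e H \sim H$ for all $e \in E$. Then $(\underline{e},H) \in \Omega$, so $\omega(\underline{e},H) \in H^2(Q,M^N)$ is defined, and the first bullet of Lemma \ref{lemma3} gives $\infl(\omega(\underline{e},H)) = [\underline{e}]$; hence $[\underline{e}] \in \Image\infl$. Finally, the parenthetical reformulation in terms of partial splittings is pure bookkeeping: as noted in this section, $H \mapsto s$ with $s(N)=H$ is a bijection between partial semi-direct complements of $N$ in $\underline{e}$ and partial splittings of $\underline{e}$ over $N$ under which ${}^e H$ corresponds to ${}^e s$ and the relation $\sim$ on complements corresponds to $\sim$ on splittings, so the two conditions are literally the same.

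I do not expect any real obstacle, since all the substantive work is already contained in Lemma \ref{lemma3}. The only points deserving a moment's care are to confirm that the hypothesis $[\underline{e}] \in H^2(G,M)_1$ is precisely the ``partially split'' assumption required to invoke each bullet of that lemma, and, in the forward direction, that one may start from an arbitrary preimage $[\underline{e}']$ without loss of generality.
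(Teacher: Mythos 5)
Your proposal is correct and matches the paper's intent exactly: the corollary is stated there as an immediate consequence of Lemma \ref{lemma3}, with the two bullets supplying the two directions of the equivalence and the complement/splitting dictionary handling the parenthetical remark, just as you argue.
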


\section{Construction of $\rho$}\label{second_map}
We want to construct a map $\rho : H^2(G,M)_1 \rightarrow H^1(Q,H^1(N,M))$, extending the five-term sequence to a six-term exact sequence. Take an extension $[\underline{e}] \in H^2(G,M)_1$. This is an extension of the form
\[\underline{e} : \xymatrix{0 \ar[r] & M \ar[r]^i & E \ar[r]^p & G \ar[r] & 1,}\] 
for which the induced extension 
$ \xymatrix{0 \ar[r] & M \ar[r]^-i & p^{-1}(N) \ar[r]^-p & N \ar[r] & 1} $
 is split. 
Choose a partial splitting $s_0 : N \rightarrow E$ of $\underline{e}$ over $N$. We define a map
\[\widetilde{\rho}_0(\underline{e}) : E \rightarrow \mbox{Der}(N,M)\] associated to $s_0$, sending $x \in E$ to the derivation $d_x : N \rightarrow M$, defined by $i(d_x(n))=({}^x s_0)(n)s_0(n)^{-1}$. 
Another way to describe this derivation is by taking the derivation associated to $^{x}H_0=\gamma^{-1}(x\gamma(H_0)x^{-1})$, where $H_0=\{(0,n) \ | \ n \in N\}$ and $\gamma : M \rtimes N \rightarrow p^{-1}(N)$ is the isomorphism given by $\gamma(m,n)=i(m)s_0(n)$. 

We know from the previous section that the action of $E$ on the equivalence classes of partial splittings factors through $G$ and $Q$, so this will also be the case for $\widetilde{\rho}_0(\underline{e})$. Hence, we obtain a map \[\overline{\rho}_0(\underline{e}): Q \rightarrow \mbox{Der}(N,M)/\sim \ \cong H^1(N,M),\] sending $q$ to the class of the derivation $\widetilde{\rho}_0(\underline{e})(x)$, with $\pi(p(x))=q$. Observe that $\overline{\rho}_0(\underline{e})$ is trivial iff ${}^x s_0 \sim s_0$ for all $x \in E$.  

We now show that the map $\overline{\rho}_0(\underline{e})$ is a derivation, where $H^1(N,M)$ has the natural $Q$-module structure as discussed in section \ref{der_sec_sdc}. 
\begin{lemma}
The map $\overline{\rho}_0(\underline{e}): Q \rightarrow H^1(N,M)$ is a derivation.
\end{lemma}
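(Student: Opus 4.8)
The plan is to verify the derivation identity $\overline{\rho}_0(\underline{e})(q_1 q_2) = \overline{\rho}_0(\underline{e})(q_1) + {}^{q_1}\overline{\rho}_0(\underline{e})(q_2)$ directly from the defining cocycle formula. First I would fix lifts: pick $x_1, x_2 \in E$ with $\pi(p(x_i)) = q_i$, so that $x_1 x_2$ is a lift of $q_1 q_2$. The key computation is to understand the derivation $d_{x_1 x_2}$ attached to $x_1 x_2$ in terms of $d_{x_1}$ and $d_{x_2}$. Writing out the definition, $i(d_{x_1 x_2}(n)) = ({}^{x_1 x_2}s_0)(n)\, s_0(n)^{-1}$, and since the assignment $x \mapsto {}^x s_0$ is a genuine action of $E$ on partial splittings (as recalled just before the construction of $\omega$ and again in this section), we have ${}^{x_1 x_2}s_0 = {}^{x_1}({}^{x_2}s_0)$. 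So I would insert $({}^{x_2}s_0)(n) \, ({}^{x_2}s_0)(n)^{-1}$ in the middle to get
\[ i(d_{x_1 x_2}(n)) = \bigl[({}^{x_1}({}^{x_2}s_0))(n)\,({}^{x_2}s_0)(n)^{-1}\bigr]\cdot\bigl[({}^{x_2}s_0)(n)\,s_0(n)^{-1}\bigr], \]
where the second bracket is exactly $i(d_{x_2}(n))$.

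The remaining work is to identify the first bracket with ${}^{x_1}$ applied (in the appropriate sense) to the derivation $d_{x_2}$. The subtlety is that $d_{x_2}$ was defined using the reference splitting $s_0$, whereas the first bracket compares ${}^{x_1}({}^{x_2}s_0)$ against ${}^{x_2}s_0$, i.e.\ it is the derivation measuring the ${}^{x_1}$-twist of the splitting $s' := {}^{x_2}s_0$ rather than of $s_0$. Here I would use the fact, established in section \ref{der_sec_sdc}, that the $G$-action on $\mathrm{Der}(N,M)/\!\sim$ is well-defined and compatible with the identification $\mathrm{Der}(N,M)/\!\sim\, \cong H^1(N,M)$ with its usual $Q$-module structure: passing to equivalence classes, the class of the derivation attached to $({}^{x_1}s')(n)\,s'(n)^{-1}$ equals ${}^{q_1}$ times the class of the derivation attached to $({}^{x_1}s_0)(n)\,s_0(n)^{-1}$, because changing the reference splitting from $s_0$ to $s'$ (which is ${}^{x_2}s_0$, living in the same $Q$-orbit structure) only changes the cocycle by an inner derivation — and more precisely because the map $x \mapsto [\text{derivation of }{}^x s_0]$ is, up to coboundary, independent of which $s_0$ in the chosen class we use, this being precisely the content of the action factoring through $Q$. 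Thus, after passing to classes in $H^1(N,M)$,
\[ \overline{\rho}_0(\underline{e})(q_1 q_2) = {}^{q_1}\overline{\rho}_0(\underline{e})(q_2) + \overline{\rho}_0(\underline{e})(q_1). \]

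The main obstacle I anticipate is precisely this last identification — making rigorous that the first bracket, which naturally expresses a twist relative to the shifted splitting $s' = {}^{x_2}s_0$, equals ${}^{q_1}$ of the class computed relative to $s_0$. The cleanest way around it is to exploit that the whole construction is natural and that the $Q$-action on $H^1(N,M)$ is by definition induced by $H \mapsto {}^{(0,g)}H$ on semi-direct complements (equivalently on derivations by the formula $({}^g d)(n) = g\cdot d(g^{-1}ng)$ recalled in section \ref{der_sec_sdc}); one checks that conjugating the defining relation $i(d_x(n)) = ({}^x s_0)(n)s_0(n)^{-1}$ by $x_1$ and relabeling $n \mapsto p(x_1)^{-1} n\, p(x_1)$ converts the derivation attached to ${}^{x_2}$ into $p(x_1)$-conjugate form, which is exactly ${}^{q_1}$ acting. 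I expect the verification that $\overline{\rho}_0(\underline{e})(1) = 0$ to be immediate (the identity lift gives ${}^1 s_0 = s_0$, hence the zero derivation), so it need only be remarked in passing.
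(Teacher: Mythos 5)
Your overall strategy --- insert a middle term into $({}^{x_1x_2}s_0)(n)\,s_0(n)^{-1}$ and identify the two resulting factors --- is the right one and is essentially what the paper does, but the particular insertion you chose breaks the argument. With your decomposition the first bracket is $({}^{x_1}s')(n)\,s'(n)^{-1}$ for $s'={}^{x_2}s_0$; writing $s'=d_{x_2}s_0$ and using the paper's formula ${}^{x_1}(d\,s_0)={}^{p(x_1)}d\ {}^{x_1}s_0$, this bracket equals $i\bigl(d_{x_1}(n)+({}^{p(x_1)}d_{x_2})(n)-d_{x_2}(n)\bigr)$, not $i\bigl(({}^{p(x_1)}d_{x_2})(n)\bigr)$. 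So your proposed identification of the first bracket with ``${}^{q_1}$ applied to $d_{x_2}$'' (or, a few lines later, with ``${}^{q_1}$ times the class of the derivation attached to $({}^{x_1}s_0)(n)s_0(n)^{-1}$'', i.e.\ ${}^{q_1}[d_{x_1}]$ --- the two claims are not even consistent with each other) fails, and the supporting assertion that changing the reference splitting from $s_0$ to $s'$ ``only changes the cocycle by an inner derivation'' is false: by Lemma \ref{welldefinednextmap} it changes $\widetilde{\rho}(x_1)$ by ${}^{p(x_1)}d_{x_2}-d_{x_2}$, which need not be an inner derivation of $N$ in $M$. Taken literally, your two brackets would sum to ${}^{q_1}\overline{\rho}_0(\underline{e})(q_2)+\overline{\rho}_0(\underline{e})(q_2)$; the term $\overline{\rho}_0(\underline{e})(q_1)$ required by the derivation identity never appears.

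The repair is to insert $({}^{x_1}s_0)(n)^{-1}({}^{x_1}s_0)(n)$ instead of $({}^{x_2}s_0)(n)^{-1}({}^{x_2}s_0)(n)$. Then the second bracket is exactly $i(d_{x_1}(n))$, and the first bracket is $({}^{x_1x_2}s_0)(n)\,({}^{x_1}s_0)(n)^{-1}=x_1\,i\bigl(d_{x_2}(p(x_1)^{-1}np(x_1))\bigr)\,x_1^{-1}=i\bigl(({}^{p(x_1)}d_{x_2})(n)\bigr)$ on the nose --- this is precisely the computation sketched in your closing sentence, and it is the paper's proof, recorded there as $i(({}^{p(x)}d_y)(n))=i(d_{xy}(n))\,i(d_x(n))^{-1}$. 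Note that with this decomposition no passage to equivalence classes is needed at this stage: the paper proves the stronger statement that $\widetilde{\rho}_0(\underline{e}):E\rightarrow\mbox{Der}(N,M)$ is already a derivation, and $\overline{\rho}_0(\underline{e})$ inherits the property.
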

\begin{proof}
It suffices to show that $\widetilde{\rho}_0(\underline{e})$ is a derivation, where the action of $E$ on $\mbox{Der}(N,M)$ is given via $p : E \rightarrow G$. Take $x$, $y \in E$ and define $d_x=\widetilde{\rho}_0(\underline{e})(x)$, $d_y=\widetilde{\rho}_0(\underline{e})(y)$ and $d_{xy}=\widetilde{\rho}_0(\underline{e})(xy)$. Observe that $i(({}^g d)(n))=\tilde{g} i(d(g^{-1} n g)) \tilde{g}^{-1}$, where $p(\tilde{g})=g$. As a result, 
\begin{eqnarray*} i(({}^{p(x)}d_y)(n)) &=& i(d_{xy}(n))i(d_x(n))^{-1}.\end{eqnarray*}
This means that $\widetilde{\rho}_0(\underline{e})$ and $\overline{\rho}_0(\underline{e})$ are derivations.
\end{proof}

For two different partial splittings $s_0$ and $s_1$ of $\underline{e}$ over $N$, there always exists a derivation $d\in \mbox{Der}(N,M)$ such that $s_1(n)=i(d(n)) s_0(n)$ for all $n\in N$. In this case, we write $s_1=ds_0$.
\begin{lemma}
Let $s_1=d  s_0$. Then ${}^x s_1 = {}^{p(x)}d \ {}^x s_0 $ for all $x \in E$.
\end{lemma}
The proof is left to the reader. 

\begin{lemma} \label{welldefinednextmap}
Take $s_0$ and $s_1$, partial splittings of $\underline{e}$ over $N$ with $s_1=ds_0$ and take the associated maps  $\widetilde{\rho}_0(\underline{e})$, $\widetilde{\rho}_1(\underline{e}): E \rightarrow \mbox{Der}(N,M)$. 
Then for all $x \in E$, $\widetilde{\rho}_1(\underline{e})(x)-\widetilde{\rho}_0(\underline{e})(x)=
{}^{p(x)}d - d$.
\end{lemma}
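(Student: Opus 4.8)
The plan is to unwind the definition of $\widetilde{\rho}_1(\underline{e})(x)$ directly, substitute the relation $s_1 = d s_0$, and invoke the preceding lemma (${}^x s_1 = {}^{p(x)}d\,{}^x s_0$) to reduce everything to data involving $s_0$ and $d$ alone. No auxiliary construction is needed; the statement is a straightforward cocycle-level computation once that lemma is available.

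Concretely, I would fix $x \in E$ and $n \in N$ and abbreviate $d_x = \widetilde{\rho}_0(\underline{e})(x)$ and $d'_x = \widetilde{\rho}_1(\underline{e})(x)$. By definition $i(d'_x(n)) = ({}^x s_1)(n)\, s_1(n)^{-1}$. Applying the previous lemma in the form $({}^x s_1)(n) = i\bigl(({}^{p(x)}d)(n)\bigr)\,({}^x s_0)(n)$ and the relation $s_1(n) = i(d(n))\, s_0(n)$, I rewrite
\[i(d'_x(n)) = i\bigl(({}^{p(x)}d)(n)\bigr)\,({}^x s_0)(n)\, s_0(n)^{-1}\, i(d(n))^{-1} = i\bigl(({}^{p(x)}d)(n)\bigr)\, i(d_x(n))\, i(d(n))^{-1},\]
the last equality being the definition of $d_x$ via $i(d_x(n)) = ({}^x s_0)(n)\, s_0(n)^{-1}$.

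The final step uses that $i(M)$ is abelian (as $M$ is a $G$-module and $i$ a homomorphism), so the right-hand side equals $i\bigl(({}^{p(x)}d)(n) + d_x(n) - d(n)\bigr)$; injectivity of $i$ then gives $d'_x(n) = d_x(n) + ({}^{p(x)}d)(n) - d(n)$ for every $n \in N$, i.e.\ $\widetilde{\rho}_1(\underline{e})(x) - \widetilde{\rho}_0(\underline{e})(x) = {}^{p(x)}d - d$, as claimed. There is no real obstacle here: the only points requiring care are the correct order of the factors when inverting $s_1(n)$ and the observation that all terms being manipulated land in the abelian subgroup $i(M)$, so the identity is essentially forced once the earlier lemma is applied.
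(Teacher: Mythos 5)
Your proof is correct and follows the paper's own argument essentially verbatim: substitute $s_1 = ds_0$ and the previous lemma into $i(d'_x(n)) = ({}^x s_1)(n)\,s_1(n)^{-1}$, recognize $({}^x s_0)(n)\,s_0(n)^{-1} = i(d_x(n))$, and read off the sum inside $i$. The only difference is that you spell out the (harmless) appeal to commutativity of $i(M)$, which the paper leaves implicit.
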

\begin{proof}
Set $d_1=\widetilde{\rho}_1(\underline{e})(x)$ and $d_0=\widetilde{\rho}_0(\underline{e})(x)$. Using the previous lemma and the fact that $i(d_1(n))=({}^x s_1)(n) s_1(n)^{-1}$, we see that $i(d_1(n))=i(({}^{p(x)}d)(n)) ({}^x s_0)(n)s_1(n)^{-1}$. Now it is clear that 
$i(d_1(n))=i(({}^{p(x)}d)(n))i(d_0(n))i(-d(n))$ and the result follows.
\end{proof}
As a consequence, the maps $\underline{e} \mapsto [\widetilde{\rho}_0(\underline{e})] \in H^1(E,\mbox{Der}(N,M))$ and $\underline{e} \mapsto [\overline{\rho}_0(\underline{e})] \in H^1(Q,H^1(N,M))$ are independent of the choice of the partial splitting. 
Thus we can define a map $\hat{\rho}$, mapping a partially split extension $\underline{e}$ of $G$
by $M$ to $\hat{\rho}(\underline{e})=[\overline{\rho}_0(\underline{e})]$.
We show that this induces a well-defined homomorphism of groups
\[\begin{array}{rccl} 
\rho :  & H^2(G,M)_1 & \rightarrow & H^1(Q,H^1(N,M));\\
 & [\underline{e}] & \mapsto & [\overline{\rho}_0(\underline{e})]. \end{array}\]

First, we prove a more general result. 
\begin{lemma}
If $M_1$ and $M_2$ are two $G$-modules and $\alpha : M_1 \rightarrow M_2$ is a $G$-module homomorphism, then $\alpha_* (\hat{\rho}(\underline{e}))=\hat{\rho}(\underline{e}')$, where $\alpha_*$ is the induced map $H^1(Q,H^1(N,M_1)) \rightarrow H^1(Q,H^1(N,M_2))$ and $\underline{e}'$ is an extension of $G$ by $M_2$, that fits in a commutative diagram 
\[\xymatrix{\underline{e}: 0 \ar[r] & M_1 \ar[d]^{\alpha}\ar[r]^{i} & E \ar[d]^{\beta} \ar[r]^{p} & G \ar[r]& 1\\ \underline{e}': 0 \ar[r] & M_2 \ar[r]^{i'} & E' \ar[r] & \ar@{=}[u] G \ar[r]& 1.}\]
\end{lemma}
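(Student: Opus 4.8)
The statement asserts naturality of $\hat\rho$ with respect to module maps, in exactly the same spirit as Lemma \ref{nat_trans} for $\hat{\tr}$. The plan is to compute both sides on the cocycle level, using a partial splitting of $\underline e$ that is transported to $\underline e'$ via $\beta$, and then unwind the definition of $\overline\rho_0$ on each side.

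First I would fix a partial splitting $s_0 : N \rightarrow E$ of $\underline e$ over $N$ and set $s_0' = \beta \circ s_0 : N \rightarrow E'$. Since $\beta$ covers the identity on $G$ (the right square commutes, with $p' \circ \beta = p$), we get $p' \circ s_0' = \mathrm{id}_N$, so $s_0'$ is a partial splitting of $\underline e'$ over $N$; and because $i' \circ \alpha = \beta \circ i$, the image $\beta(p^{-1}(N)) \subseteq (p')^{-1}(N)$ splits, confirming $[\underline e'] \in H^2(G,M_2)_1$ so that $\hat\rho(\underline e')$ is defined. Next I would compute $\widetilde\rho_0(\underline e')(\beta(x))$ for $x \in E$: by definition it is the derivation $d'_{\beta(x)}$ with $i'(d'_{\beta(x)}(n)) = ({}^{\beta(x)} s_0')(n)\, s_0'(n)^{-1} = \beta(x)\, s_0'(p'(\beta(x))^{-1} n\, p'(\beta(x)))\, \beta(x)^{-1}\, s_0'(n)^{-1}$. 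Using $p' \circ \beta = p$ and $s_0' = \beta \circ s_0$, everything factors through $\beta$, giving $i'(d'_{\beta(x)}(n)) = \beta\big( ({}^x s_0)(n)\, s_0(n)^{-1}\big) = \beta(i(d_x(n))) = i'(\alpha(d_x(n)))$. Hence $d'_{\beta(x)} = \alpha \circ d_x = \mathrm{Der}(N,\alpha)(d_x)$, i.e. $\widetilde\rho_0(\underline e') \circ \beta = \mathrm{Der}(N,\alpha)_* \circ \widetilde\rho_0(\underline e)$ as maps $E \rightarrow \mathrm{Der}(N,M_2)$.

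Then I would pass to cohomology classes. On the level of $Q$, the element $\overline\rho_0(\underline e')$ sends $q = \pi(p'(\beta(x))) = \pi(p(x))$ to the class $[\alpha \circ d_x] = \alpha_*[\overline\rho_0(\underline e)(q)]$, where $\alpha_* : H^1(N,M_1) \rightarrow H^1(N,M_2)$ is the map induced by $\alpha$; and this $\alpha_*$ is precisely the coefficient map whose induced map on $H^1(Q,-)$ is the $\alpha_*$ in the statement. Therefore $\overline\rho_0(\underline e') = \alpha_* \circ \overline\rho_0(\underline e)$ as derivations $Q \rightarrow H^1(N,M_\bullet)$, and passing to classes in $H^1(Q,-)$ gives $\hat\rho(\underline e') = \alpha_*(\hat\rho(\underline e))$. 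By the previous lemmas (independence of $\hat\rho$ from the choice of partial splitting), the identity does not depend on the choice of $s_0$, so the argument is complete.

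The only genuine subtlety — the step I'd watch most carefully — is checking that $s_0' = \beta \circ s_0$ really is a \emph{partial} splitting of $\underline e'$ and that the diagram's commutativity is enough to carry the conjugation computation through $\beta$ verbatim; once $p' \circ \beta = p$ and $i' \circ \alpha = \beta \circ i$ are in hand this is routine, but it is where all the hypotheses get used. Everything else is a direct unwinding of the definition of $\widetilde\rho_0$ together with functoriality of $\mathrm{Der}(N,-)$ and of $H^1(N,-)$ in the coefficients.
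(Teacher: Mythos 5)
Your proposal is correct and follows essentially the same route as the paper: transport the partial splitting via $s_0'=\beta\circ s_0$, use $p'\circ\beta=p$ and $i'\circ\alpha=\beta\circ i$ to show $\widetilde\rho(\underline e')\circ\beta=\alpha\circ\widetilde\rho(\underline e)$ on derivations, and then pass to classes over $Q$. The paper's proof is exactly this computation, stated slightly more tersely.
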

Observe that the last statement is equivalent (modulo canonical isomorphisms) with the fact that $(E',\beta,i')$ is the push-out construction of the inclusion map $i$ of $\underline{e}$ and the map $\alpha$, or with $[\underline{e}']=H^2(\mathbb{1}, \alpha)[\underline{e}]$.
One can prove this equivalence using the methods in the remarks about the construction of $\omega$, following lemma \ref{lem:surjective_invariant}.
\begin{proof}
Take a partial splitting $s_1 : N \rightarrow E$ of $\underline{e}$ over $N$ and the
 partial splitting $s_2 = \beta \circ s_1$ of $\underline{e}'$ over $N$. Observe that $\beta({}^x s_1 (n))={}^{\beta(x)} s_2 (n) $ for all $x \in E$ and $n \in N$.

Since $\overline{\rho}_1(\underline{e})$ maps $q$ to the class of $\widetilde{\rho}_1(\underline{e})(x)$ in $H^1(N,M_1)$, with $\pi(p(x))=q$, the image of $\rho(\underline{e})$ under the map $\alpha_*$ will be the class of the derivation in $\mbox{Der}(Q,H^1(N,M_2))$ that maps $q$ to $[\alpha \circ \widetilde{\rho}_1(\underline{e})(x)]$. Also, $i'\Big((\alpha \circ \widetilde{\rho}_1(\underline{e})(x))(n)\Big)=i'\Big(\widetilde{\rho}_2(\underline{e}')(\beta(x))(n)\Big)$. It follows that $\alpha \circ \widetilde{\rho}_1(\underline{e})=\widetilde{\rho}_2(\underline{e}') \circ \beta$, and one easily sees that this implies that $\hat{\rho}(\underline{e}')=\alpha_* (\hat{\rho}(\underline{e}))$.
\end{proof}

We state some immediate consequences.
\begin{corollary}
The map $\rho : H^2(G,M)_1  \rightarrow  H^1(Q,H^1(N,M))$ is well-defined.
\end{corollary}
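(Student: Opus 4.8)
The plan is to obtain this corollary as an immediate consequence of the lemma just proved, applied with $M_1 = M_2 = M$ and $\alpha = \mbox{id}_M$. Recall that $\rho$ is meant to send $[\underline{e}]$ to $\hat{\rho}(\underline{e}) = [\overline{\rho}_0(\underline{e})]$, and that we have already checked that $[\overline{\rho}_0(\underline{e})] \in H^1(Q,H^1(N,M))$ does not depend on the partial splitting $s_0$ used to define it. So the only remaining point is to verify that $\hat{\rho}(\underline{e})$ depends only on the class $[\underline{e}]$, not on the chosen representative extension.

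First I would note that $\hat{\rho}$ is in fact defined on every extension representing a class in $H^2(G,M)_1$: given such an extension $\underline{e}$, the restricted extension $0 \to M \to p^{-1}(N) \to N \to 1$ has trivial class in $H^2(N,M)$, hence splits, so a partial splitting over $N$ exists. Next, if $\underline{e}$ and $\underline{e}'$ are equivalent extensions of $G$ by $M$, the equivalence is by definition an isomorphism $\beta : E \to E'$ with $\beta \circ i = i'$ and $p' \circ \beta = p$ — that is, precisely the commutative diagram appearing in the previous lemma, with $M_1 = M_2 = M$ and $\alpha = \mbox{id}_M$. Invoking that lemma gives $(\mbox{id}_M)_* \hat{\rho}(\underline{e}) = \hat{\rho}(\underline{e}')$, and since $(\mbox{id}_M)_*$ is the identity on $H^1(Q,H^1(N,M))$, this reads $\hat{\rho}(\underline{e}) = \hat{\rho}(\underline{e}')$. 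Hence $\rho([\underline{e}]) := \hat{\rho}(\underline{e})$ is independent of the representative, and $\rho$ is well-defined.

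I do not expect a genuine obstacle here, since the real content has been packaged into the preceding lemma. The only point requiring a moment's care is to observe that an equivalence of extensions literally supplies the commutative square the lemma asks for: the lemma is phrased for an arbitrary $\beta$ fitting into that square, and by the (short) five-lemma any such $\beta$ with $\alpha$ an isomorphism is automatically an isomorphism, so there is no clash with the fact that an extension equivalence is an isomorphism. Everything else — that $(\mbox{id}_M)_*$ is indeed the identity, and the combination with the already-proved independence from the partial splitting — is routine bookkeeping.
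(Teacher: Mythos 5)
Your argument is exactly the one the paper intends: the corollary is stated as an immediate consequence of the preceding lemma, obtained by taking $M_1 = M_2 = M$ and $\alpha = \mathrm{id}_M$, so that an equivalence of extensions supplies precisely the commutative square the lemma requires and yields $\hat{\rho}(\underline{e}) = \hat{\rho}(\underline{e}')$. Your additional remarks (existence of a partial splitting for any representative, and the already-established independence from the choice of splitting) correctly fill in the routine details the paper leaves implicit.
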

\begin{corollary}
The map $\rho$ is natural with respect to the modules.
\end{corollary}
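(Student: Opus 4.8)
The plan is to derive this corollary directly from the lemma proved just above, which already computes the effect of a $G$-module homomorphism $\alpha : M_1 \rightarrow M_2$ on $\hat{\rho}$. Being ``natural with respect to the modules'' means precisely that for every such $\alpha$ the square
\[\xymatrix{H^2(G,M_1)_1 \ar[r]^-{\rho} \ar[d]_{\alpha_*} & H^1(Q,H^1(N,M_1)) \ar[d]^{\alpha_*}\\
H^2(G,M_2)_1 \ar[r]^-{\rho} & H^1(Q,H^1(N,M_2))}\]
commutes, where the left-hand $\alpha_*$ is $H^2(\mathbb{1},\alpha)$ and the right-hand $\alpha_*$ is the map induced by $\alpha$ first on $H^1(N,-)$ and then on $H^1(Q,-)$.

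The first step is to check that the diagram is even meaningful, i.e.\ that $H^2(\mathbb{1},\alpha)$ carries $H^2(G,M_1)_1$ into $H^2(G,M_2)_1$. This is just naturality of the restriction map in group cohomology: $\res\circ H^2(\mathbb{1},\alpha) = H^2(\mathbb{1},\alpha)\circ\res$, so a class killed by $\res : H^2(G,M_1)\rightarrow H^2(N,M_1)$ is sent to a class killed by $\res : H^2(G,M_2)\rightarrow H^2(N,M_2)$.

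Next, given $[\underline{e}]\in H^2(G,M_1)_1$ represented by a partially split extension $\underline{e}$ with inclusion $i:M_1\rightarrow E$, I would form the push-out construction $\underline{e}'$ of $i$ and $\alpha$ as in section \ref{pb_po}. It fits into exactly the commutative diagram occurring in the previous lemma, and by the remark following that lemma one has $[\underline{e}'] = H^2(\mathbb{1},\alpha)[\underline{e}] = \alpha_*[\underline{e}]$; in particular $\underline{e}'$ may be taken partially split by the first step. The previous lemma then gives $\hat{\rho}(\underline{e}') = \alpha_*(\hat{\rho}(\underline{e}))$. Combining this with $\rho([\underline{e}]) = \hat{\rho}(\underline{e})$ and $\rho(\alpha_*[\underline{e}]) = \rho([\underline{e}']) = \hat{\rho}(\underline{e}')$ yields $\rho(\alpha_*[\underline{e}]) = \alpha_*(\rho([\underline{e}]))$, which is the asserted commutativity.

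Since the genuine computation has already been carried out in the lemma, I expect no real obstacle here; the only points that deserve a sentence are that $\alpha_*$ respects the subgroups $H^2(G,-)_1$ (handled above) and that $\rho$ is independent of the chosen representative of a class in $H^2(G,M_1)_1$, which is exactly the content of the immediately preceding corollary. One could alternatively reprove everything by hand on the level of derivations $\widetilde{\rho}_0$, but that would merely re-establish the preceding lemma; the push-out argument is the economical route.
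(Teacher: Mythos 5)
Your proposal is correct and follows the paper's own route: the paper states this corollary as an immediate consequence of the preceding lemma, which is exactly how you derive it, with the (worthwhile but routine) added checks that $H^2(\mathbb{1},\alpha)$ preserves the subgroups $H^2(G,-)_1$ and that $\rho$ is representative-independent.
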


The following theorem is now easily proven using lemma \ref{homomorphism_of_group_objects} and the fact that the cohomology functors $H^n(B,-)$ preserve products for every group $B$. 
\begin{theorem}
The map $\rho : H^2(G,M)_1 \rightarrow H^1(Q,H^1(N,M))$ is a homomorphism.
\end{theorem}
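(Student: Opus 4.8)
The plan is to invoke the categorical machinery assembled in Section \ref{hom_point_view}, exactly as was done for $\hat{\tr}$ at the end of Section \ref{first_map}. By the preceding lemmas in this section, $\hat{\rho}$ fits into a natural transformation between two functors in the variable $M$ (from the category of $G$-modules to the category of sets): the source functor is $M \mapsto H^2(G,M)_1$ and the target functor is $M \mapsto H^1(Q,H^1(N,M))$. Concretely, the corollary ``$\rho$ is natural with respect to the modules'' already records that $a = \rho$ is a natural transformation, so the only thing left to check is that both functors preserve products and the final object, whence they preserve group objects, and then Lemma \ref{homomorphism_of_group_objects} forces $\rho = a_M$ to be a homomorphism of group objects in $\mathbf{Set}$, i.e.\ a group homomorphism.

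First I would verify that the target functor $H^1(Q,H^1(N,-))$ preserves products and the final object. This is immediate from the stated fact that each cohomology functor $H^n(B,-)$ preserves products and sends $0$ to $0$: apply this twice, first with $B=N$ and $n=1$, then with $B=Q$ and $n=1$, using that $H^1(N,-)$ is a functor from $G$-modules to $Q$-modules compatibly with products. So $H^1(Q,H^1(N,M_1\times M_2)) \cong H^1(Q,H^1(N,M_1)) \times H^1(Q,H^1(N,M_2))$ and the final object goes to the final object.

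Next I would check that the source functor $M \mapsto H^2(G,M)_1$ preserves products and the final object. Since $H^2(G,-)$ preserves products and $H^2(G,0)=0$, it remains to see that under the isomorphism $H^2(G,M_1\times M_2) \cong H^2(G,M_1)\times H^2(G,M_2)$ the subgroup $H^2(G,M_1\times M_2)_1$ corresponds to $H^2(G,M_1)_1 \times H^2(G,M_2)_1$. This follows because the restriction map $\res : H^2(G,-) \to H^2(N,-)$ is a natural transformation, hence commutes with the product decomposition, so a class is killed by $\res$ if and only if each of its two components is; and the final object $0$ is trivially in the kernel. With both functors shown to preserve products and final objects, Lemma \ref{preserving_group_objects} says they preserve group objects, and since every $G$-module $M$ is a group object in the category of $G$-modules (with $+$), both $H^2(G,M)_1$ and $H^1(Q,H^1(N,M))$ acquire their group-object (= abelian group) structures functorially from $+ : M\times M \to M$.

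Finally, Lemma \ref{homomorphism_of_group_objects} applied to the natural transformation $\rho$ between these two group-object-preserving functors yields that $\rho_M : H^2(G,M)_1 \to H^1(Q,H^1(N,M))$ is a homomorphism of group objects in $\mathbf{Set}$; unwinding the definition of the group-object structure (which, by the remark after Lemma \ref{sdc_preserves_products} and the analogous remark for cohomology, is the usual addition on both sides) this is precisely the statement that $\rho$ is a group homomorphism. I do not anticipate a serious obstacle: the only mildly delicate point is the compatibility of the subobject $H^2(G,M)_1$ with products, and that is handled by naturality of $\res$ as indicated above; everything else is a direct citation of Lemmas \ref{preserving_group_objects}, \ref{homomorphism_of_group_objects}, \ref{sdc_preserves_products} together with the already-established naturality of $\rho$.
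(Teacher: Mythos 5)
Your proposal is correct and follows essentially the same route as the paper, which also deduces the theorem from Lemma \ref{homomorphism_of_group_objects} together with the fact that the cohomology functors $H^n(B,-)$ preserve products, applied to the naturality of $\rho$ in the module variable. You supply one detail the paper leaves implicit, namely that the subfunctor $H^2(G,-)_1$ is itself compatible with products via naturality of $\res$, which is a worthwhile check but not a change of method.
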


We want to prove that the definition of $\rho$ yields an exact sequence. 
\begin{lemma}
The sequence 
\[\xymatrix{\ldots \ar[r] & H^1(N,M)^Q \ar[r]^{\tr} & H^2(Q,M^N) \ar[r]^{\infl} & H^2(G,M)_1 \ar[r]^{\rho \ \ \ \ \ \ } & H^1(Q,H^1(N,M))}\]
is exact.
\end{lemma}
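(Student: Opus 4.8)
The plan is to establish exactness at the two middle terms $H^2(Q,M^N)$ and $H^2(G,M)_1$ of the displayed four-term piece; exactness at $H^1(N,M)^Q$ was already handled in the previous section, so only these two spots remain. For exactness at $H^2(Q,M^N)$ I would invoke Lemma \ref{lemma3} directly. Indeed, the composite $\rho\circ\infl$ sends $[\underline{e}']\in H^2(Q,M^N)$ to $\rho(\infl[\underline{e}'])$; by the second bullet of Lemma \ref{lemma3} the extension $\underline{e}=\infl(\underline{e}')$ admits a partial semi-direct complement $H$ of $N$ with ${}^eH\sim H$ for all $e\in E$, equivalently (taking the associated partial splitting $s_0$) with ${}^es_0\sim s_0$ for all $e$. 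But then the map $\overline{\rho}_0(\underline{e})$ is trivial by the remark made right after its definition, so $\rho(\infl[\underline{e}'])=0$. Conversely, if $[\underline{e}]\in H^2(G,M)_1$ satisfies $\rho[\underline{e}]=0$, then by independence of the partial splitting (the paragraph after Lemma \ref{welldefinednextmap}) we may pick $s_0$ with $[\overline{\rho}_0(\underline{e})]=0$; after adjusting $s_0$ within its class one arranges ${}^es_0\sim s_0$ for all $e\in E$, which by Corollary \ref{cor:image_inf} puts $[\underline{e}]$ in $\Image\infl$. This gives $\Ker\rho=\Image\infl$.

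For exactness at $H^1(N,M)^Q$ (the spot $\tr$–$\infl$) nothing new is needed since it is already part of the five-term sequence proved in Section \ref{first_map}. What does need care is exactness at $H^2(Q,M^N)$ on the $\infl$ side, i.e.\ $\Image\tr=\Ker\infl$. Here I would use Lemma \ref{lemma3} again: $\Ker(\infl:H^2(Q,M^N)\to H^2(G,M))$ consists of those $[\underline{e}']$ with $\infl[\underline{e}']=0$; applying the second bullet of Lemma \ref{lemma3} to the split extension $\underline{e}_0$ (which does represent $\infl[\underline{e}']$ after the identification) yields a partial semi-direct complement $H\in\sdc(N,M)^Q$ with $\omega(\underline{e}_0,H)=[\underline{e}']$, i.e.\ $\hat{\tr}(H)=[\underline{e}']$, so $[\underline{e}']\in\Image\tr$. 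The reverse inclusion $\Image\tr\subseteq\Ker\infl$ is exactly $\infl\circ\tr=0$, already established at the end of Section \ref{first_map} via $\tr[H]=\omega(\underline{e}_0,H)$ and $\infl(\underline{e}_0)=0$ together with the first bullet of Lemma \ref{lemma3}.

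The step I expect to be the main obstacle is the half of exactness at $H^2(G,M)_1$ that goes ``$\rho[\underline{e}]=0\Rightarrow[\underline{e}]\in\Image\infl$.'' The subtlety is that $[\overline{\rho}_0(\underline{e})]=0$ in $H^1(Q,H^1(N,M))$ only says that the derivation $q\mapsto[\widetilde{\rho}_0(\underline{e})(x)]$ is principal, i.e.\ equals $q\mapsto {}^q[c]-[c]$ for some fixed $[c]\in H^1(N,M)$; one must upgrade this to the genuinely stronger statement ${}^es_0\sim s_0$ for \emph{all} $e\in E$, and for this one replaces $s_0$ by $s_1=d\,s_0$ for a suitable derivation $d$ with $[d]=[c]$, using Lemma \ref{welldefinednextmap} to compute $\widetilde{\rho}_1(\underline{e})(x)-\widetilde{\rho}_0(\underline{e})(x)={}^{p(x)}d-d$ and thereby kill $\widetilde{\rho}_1(\underline{e})$ up to equivalence; one then needs that the resulting equivalences ${}^es_1\sim s_1$ hold on the nose (not merely on classes), which follows because the derivation realizing the class zero can be taken to be the zero derivation only after choosing $s_1$ correctly. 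Once $s_1$ is in hand, Corollary \ref{cor:image_inf} finishes the job.
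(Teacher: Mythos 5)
Your argument is correct and is essentially the paper's own proof: the paper disposes of exactness at $H^2(G,M)_1$ in one line by citing Corollary \ref{cor:image_inf} and Lemma \ref{welldefinednextmap}, and this is exactly the adjust-the-partial-splitting argument you spell out (with exactness at $H^2(Q,M^N)$ already contained in the five-term sequence of Section \ref{first_map}). The final subtlety you flag is not actually an obstacle: ${}^{e}s_1\sim s_1$ is by definition a statement about conjugacy by \emph{some} element of $i(M)$, hence is literally equivalent to $[\widetilde{\rho}_1(\underline{e})(x)]=0$ in $H^1(N,M)$, as the paper observes immediately after defining $\overline{\rho}_0(\underline{e})$.
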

This follows from corollary \ref{cor:image_inf} and lemma \ref{welldefinednextmap}. 

\section{Construction of $\lambda$}\label{third_map}
In this section, we give a map $\lambda : H^1(Q,H^1(N,M)) \rightarrow H^3(Q,M^N)$ that completes the  seven-term exact sequence 
\[\xymatrix{0 \ar[r] & H^1(Q,M^N) \ar[r]^-{\infl} & H^1(G,M) \ar[r]^-{\res} & H^1(N,M)^Q \ar[r]^-{\tr} & H^2(Q,M^N)& \\ \ \  & {} \ar[r]^-{\infl} & H^2(G,M) \ar[r]^-{\rho} & H^1(Q,H^1(N,M))\ar[r]^-{\lambda} & H^3(Q,M^N).}\]
We will describe the construction given by Huebschmann in \cite{hueb81-2}. Huebschmann proves that his construction yields the differential map induced by the spectral sequence, so the map will automatically have all the good properties. In particular it will be a homomorphism, that is natural in a stronge sense (see \cite{hueb81-2}).

We will make use of the following interpretation of the third cohomology group. It is known that $H^3(G,M)$ corresponds to equivalence classes of \emph{crossed extensions}, i.e. exact sequences of the form
\[\xymatrix{0 \ar[r] & M \ar[r] & C \ar[r]^{\delta} & \Gamma \ar[r] & G \ar[r] & 1},\]
where $\xymatrix{C \ar[r]^{\delta}& \Gamma}$ is a crossed module that induces the given action of $G$ on $M$ (see \cite[IV.5]{brow82-1}). The equivalence relation is generated by elements $(\mathbb{1}_{M}, \alpha : C \rightarrow C', \beta : \Gamma \rightarrow \Gamma', \mathbb{1}_G)$, such that the diagram 
\[\xymatrix{0 \ar[r] & M \ar[r] \ar@{=}[d] & C \ar[r]^{\delta} \ar[d]^{\alpha}& \Gamma \ar[r] \ar[d]^{\beta} & G \ar[r] \ar@{=}[d] & 1\\
0 \ar[r] & M \ar[r] & C' \ar[r]^{\delta'} & \Gamma' \ar[r] & G \ar[r] & 1
}\]
is commutative, and $(\alpha, \beta)$ is a homomorphism of crossed modules. This in particular means that $\alpha$ is compatible with the action of $\Gamma$, where $\Gamma$ acts on $C'$ via $\beta$.

Now let 
\[\underline{e}_0: \xymatrix{0 \ar[r] & M \ar[r] & M \rtimes N \ar[r] & N \ar[r] & 0}\]
be the standard split extension of $N$ by $M$, and define $\Aut(N,M)\subset \Aut(M) \times \Aut(N)$, the group of all couples of automorphisms $(\sigma,\phi) \in \Aut(M) \times \Aut(N)$ with $\sigma(n \cdot m)=\phi(n)\cdot \sigma(m)$. Observe that every element of $\Aut(N,M)$ induces an automorphism of $M \rtimes N$. Also, there is a map $\chi : G  \rightarrow \Aut(N,M)$ mapping $g$ to $(i_g,c_g)$, where $i_g(m)=g \cdot m$ and $c_g(n)=gng^{-1}$.

Take $\Aut^M(M \rtimes N) \subset \Aut(M \rtimes N)$, the group of all automorphism of $M \rtimes N$ that map $M$ to itself. Observe that these automorphism are of the form 
\[(m,n) \mapsto (\sigma(m)+d(n), \phi(n))\] where $(\sigma,\phi) \in \Aut(N,M)$ and $d \circ \phi^{-1} \in \mbox{Der}(N,M)$. There is an obvious homomorphism $\theta : \Aut^M(M \rtimes N) \rightarrow \Aut(N,M)$, which fits in a split exact sequence
\[\xymatrix{0 \ar[r] & \mbox{Der}(N,M) \ar[r]^-{\alpha} & \Aut^M(M \rtimes N) \ar[r]^-{\theta} & \Aut(N,M) \ar[r] & 1,}\] where $\alpha$ maps a derivation $d$ to the automorphism $(m,n) \mapsto (m + d(n),n)$.

Let $\Aut_G(\underline{e}_0)$ be the pull-back of $\chi$ and $\theta$.
Using the pull-back properties, we find a split exact sequence
\[\xymatrix{0 \ar[r] & \mbox{Der}(N,M) \ar[r]^-{i_1} & \Aut_G(\underline{e}_0) \ar[r]^-{p_1} & G \ar[r] & 1.}\]
Observe that the induced $G$-module structure on $\mbox{Der}(N,M)$ coincides with the one given in section \ref{der_sec_sdc}.
There is a splitting $s : G \rightarrow \Aut_G(\underline{e}_0)$, mapping $g$ to the couple $(c_{(0,g)},g)$, where $c_{(0,g)}$ is conjugation with $(0,g)$ in $M \rtimes G$. Since $N$ is a normal subgroup of $G$, $M \rtimes N$ will be a normal subgroup of $M \rtimes G$ such that $c_{(0,g)}$ can indeed be considered as an automorphism of $M \rtimes N$.

Take $\beta_1 : M \rtimes N \rightarrow \Aut^M(M \rtimes N)$ mapping $(m,n)$ to $c_{(m,n)}$, conjugation with $(m,n)$, and $\beta_2 :M \rtimes N \rightarrow G$, mapping $(m,n)$ to $n$. The pull-back property gives us a map $\beta : M \rtimes N \rightarrow \Aut_G(\underline{e}_0)$. Define an action of $\Aut_G(\underline{e}_0)$ on $M \rtimes N$ by setting $^{(h,g)} (m,n)=h(m,n)$. The reader can check that this turns $\beta : M \rtimes N \rightarrow \Aut_G(\underline{e}_0)$ into a crossed module.

It is easy to see that we obtain a commutative diagram
\[\xymatrix{0 \ar[r] & M \ar[r] \ar[d]^{-\delta^0} & M \rtimes N \ar[r] \ar[d]^{\beta} & N \ar[d] \ar[r] & 1\\
0 \ar[r]&\mbox{Der}(N,M) \ar[r]^{i_1} & \Aut_G(\underline{e}_0) \ar[r]^-{p_1} & G \ar[r] & 1,}\]
where $-\delta^0(m)(n)=m - n \cdot m$. Furthermore, the images of the vertical maps are normal subgroups of the groups in the bottom row, so we can take cokernels without loosing exactness of the rows, thanks to the snake-lemma and injectivity of the map $N\to G$. Define $\Out_G(\underline{e}_0) = \Aut_G(\underline{e}_0) / \Image \beta$. We get the following commutative diagram with exact rows. 
\begin{equation}\label{big_diagram}\xymatrix{0 \ar[r] & M \ar[r] \ar[d]^{-\delta^0} & M \rtimes N \ar[r] \ar[d]^{\beta} & N \ar[d] \ar[r] & 1\\
0 \ar[r]&\mbox{Der}(N,M) \ar[d] \ar[r]^{i_1} & \Aut_G(\underline{e}_0) \ar[d]^{\Pi}\ar[r]^-{p_1} & G \ar[d]^{\pi}\ar[r] & 1\\
0 \ar[r]&H^1(N,M) \ar[r]^{i_2} & \Out_G(\underline{e}_0) \ar[r]^-{p_2} & Q \ar[r] & 1}\end{equation}
The standard splitting of the first row is compatible with the given splitting $s$ of the second row, and the resulting quotient map $\overline{s} : Q \rightarrow \Out_G(\underline{e}_0)$ is a splitting of the third row. 

We restrict our attention to the second column of the diagram. It is not difficult to see that $M^N$ is the kernel of $\beta$, so we obtain a crossed extension
\[\underline{e}:\xymatrix{0 \ar[r] & M^N \ar[r] & M \rtimes N \ar[r]^-{\beta} & \Aut_G(\underline{e}_0) \ar[r]^-{\Pi}  & \Out_G(\underline{e}_0) \ar[r] & 1.}\] 

Take $D \in \mbox{Der}(Q,H^1(N,M))$ and define a new splitting $D\overline{s}: Q \rightarrow \Out_G(\underline{e}_0)$ as $D\overline{s}(q) = i_2(D(q))\overline{s}(q)$. 
Now \[\underline{e}_D : \xymatrix{0 \ar[r] & M^N \ar[r] & M \rtimes N \ar[r]^-{\beta} & \Pi^{-1}(D\overline{s}(Q)) \ar[r]^-{\Phi} & Q \ar[r] & 1}\] is a crossed extension, with $\Phi=p_2 \circ \Pi$. Observe that $\Pi^{-1}(D\overline{s}(Q))$ with the associated maps can be seen as the pull-back of $\Pi$ and $D\overline{s}$. One can check that this means that $[\underline{e}_D]=(D\overline{s})^* [\underline{e}]$.
It is straightforward to see that the induced action of $Q$ on $M^N$ coincides with the given one.
Define \[\hat{\lambda} : \mbox{Der}(Q,H^1(N,M)) \rightarrow H^3(Q,M^N):\; D \mapsto \hat{\lambda}(D)=[\underline{e}_D]. \]
Huebschmann proves in \cite{hueb81-2} that $\hat{\lambda}$ yields a well-defined homomorphism $\lambda:H^1(Q,H^1(N,M)) \rightarrow H^3(Q,M^N)$, 
coinciding with the corresponding differential of the spectral sequence.

Since we don't know whether or not our map $\rho$ is the same as the map obtained by Sah in \cite{sah74-1}, we still have to check exactness 
of the following part of the sequence
\[\xymatrix{\cdots \ar[r] & H^2(G,M)_1 \ar[r]^-{\rho} & H^1(Q,H^1(N,M)) \ar[r]^-{\lambda} & H^3(Q,M^N)}.\]
We will make use of the following result due to Huebschmann (\cite{hueb80-1}). 

\begin{lemma}
A crossed extension 
\[\xymatrix{0 \ar[r] & B \ar[r] & C \ar[r]^{\delta} & \Gamma \ar[r]^{\rho} & Q \ar[r] & 1}\]
 is equivalent to the zero extension if and only if there exists  a short exact sequence \break $\xymatrix{1 \ar[r]& C \ar[r]& E \ar[r]& Q \ar[r]& 1}$ and a homomorphism $h : E \rightarrow \Gamma$ such that the diagram
\[\xymatrix{ & 1 \ar[r]& C \ar@{=}[d]\ar[r]& E \ar[d]^{h}\ar[r]& Q \ar@{=}[d] \ar[r]& 1\\
0 \ar[r]& B \ar[r]& C \ar[r]^{\delta} & \Gamma \ar[r]^{\rho} & Q\ar[r] & 1}\]
is commutative and $(\mathbb{1}: C \rightarrow C,h : E \rightarrow \Gamma)$ is a homomorphism of crossed modules.
\end{lemma}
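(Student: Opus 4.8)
Since $H^3(Q,B)$ is in bijection with equivalence classes of crossed (two‑fold) extensions of $Q$ by $B$, the zero extension corresponding to $0$, the assertion that the given crossed extension $\underline{c}: 0\to B\to C\to\Gamma\to Q\to 1$, with structure maps $\delta:C\to\Gamma$ and $\rho:\Gamma\to Q$, is equivalent to the zero extension just means that the associated class $\theta\in H^3(Q,B)$ vanishes. So the plan is to translate the condition on $\theta$ into the data $(E,h)$. Recall how $\theta$ is computed: pick a normalized set section $s:Q\to\Gamma$ of $\rho$ and a normalized map $\ell:Q\times Q\to C$ with $\delta(\ell(q_1,q_2))=s(q_1)s(q_2)s(q_1q_2)^{-1}$ — possible since the right‑hand side lies in $\Ker\rho=\Image\delta$; writing $q\ast c:=s(q)\cdot c$ for the resulting operation of $Q$ on $C$ (an action only up to inner automorphisms of $C$), the element
\[e(q_1,q_2,q_3)=\bigl(q_1\ast\ell(q_2,q_3)\bigr)\,\ell(q_1,q_2q_3)\,\ell(q_1q_2,q_3)^{-1}\,\ell(q_1,q_2)^{-1}\]
has trivial image under $\delta$ (a short computation with the crossed module identity $\delta(\gamma\cdot c)=\gamma\,\delta(c)\,\gamma^{-1}$), so it lies in $B=\Ker\delta$, is a normalized $3$-cocycle, and its class is $\theta$, independently of the choices of $s$ and $\ell$.

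For the implication ``$\Leftarrow$'': given $1\to C\to E\to Q\to 1$ with surjection $\pi:E\to Q$, and $h:E\to\Gamma$ as in the statement, choose a set section $\sigma:Q\to E$ of $\pi$ and put $s:=h\circ\sigma$. Then $s$ is a section of $\rho$ because $\rho\circ h=\pi$, and $\sigma(q_1)\sigma(q_2)\sigma(q_1q_2)^{-1}\in C$ is sent by $\delta=h|_C$ to $s(q_1)s(q_2)s(q_1q_2)^{-1}$, so we may choose $\ell(q_1,q_2):=\sigma(q_1)\sigma(q_2)\sigma(q_1q_2)^{-1}$. The hypothesis that $(\mathbb{1}_C,h)$ is a crossed module morphism says exactly that $xcx^{-1}=h(x)\cdot c$ in $C$ for all $x\in E$, $c\in C$; specializing $x=\sigma(q)$ identifies $q\ast c=s(q)\cdot c$ with conjugation by $\sigma(q)$ inside $E$. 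Substituting this into the formula for $e$ turns it into the associativity obstruction of the honest group $E$ relative to the transversal $\sigma$, which is identically $1$; hence $\theta=0$.

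For ``$\Rightarrow$'': assume $\theta=0$. With $s,\ell$ as above, $[e]=0$, so $e=\partial f$ for some normalized $f\in C^2(Q,B)$; replacing $\ell$ by $\ell\cdot f^{\pm1}$ (permissible since $B$ is central in $C$ and killed by $\Image\delta$, so $\delta\ell$ is unaffected) we may assume $e\equiv 1$. Then the automorphisms $\phi_q:=s(q)\cdot{-}$ of $C$ together with $\ell$ form a Schreier system: the defining relation for $\ell$ gives $\phi_{q_1}\phi_{q_2}=\mathrm{inn}(\ell(q_1,q_2))\,\phi_{q_1q_2}$, and $e\equiv 1$ is precisely the $2$-cocycle identity for $\ell$. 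Hence $E:=C\times Q$ with product $(c_1,q_1)(c_2,q_2):=\bigl(c_1\,\phi_{q_1}(c_2)\,\ell(q_1,q_2),\,q_1q_2\bigr)$ is a group fitting in $1\to C\to E\to Q\to 1$. Define $h:E\to\Gamma$ by $h(c,q):=\delta(c)\,s(q)$; it is a homomorphism by a direct check using $\delta(\phi_q(c))=s(q)\delta(c)s(q)^{-1}$ and $\delta(\ell(q_1,q_2))=s(q_1)s(q_2)s(q_1q_2)^{-1}$. Moreover $h|_C=\delta$ and $\rho\circ h$ is the projection $E\to Q$, so the square commutes, and $(\mathbb{1}_C,h)$ is a crossed module morphism since conjugating $c\in C$ by $(c_1,q_1)\in E$ yields $\delta(c_1)\cdot\bigl(s(q_1)\cdot c\bigr)=h(c_1,q_1)\cdot c$, using $c_1cc_1^{-1}=\delta(c_1)\cdot c$ in $C$.

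The main obstacle I anticipate is the conventions‑bookkeeping in the ``$\Rightarrow$'' direction: verifying that a trivialized crossed‑extension $3$-cocycle is genuinely a Schreier system for a group extension of $Q$ by $C$, and that the natural candidate $h(c,q)=\delta(c)s(q)$ is simultaneously a group homomorphism and a crossed module morphism. Both rest on the two crossed module identities $\delta(\gamma\cdot c)=\gamma\delta(c)\gamma^{-1}$ and $\delta(c)\cdot c'=cc'c^{-1}$ and on $\delta\ell$ being the coboundary of $s$; the normalizations and the independence of $[e]$ from the choices are routine and can be quoted. A cocycle‑free alternative would transport the pair $(E,h)$ along the generating equivalences of crossed extensions — pushing out $1\to C\to E\to Q\to 1$ along the maps $C\to C'$ occurring in them — but this requires care with those pushouts and is not shorter.
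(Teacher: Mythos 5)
The paper does not prove this lemma at all: it is quoted as a result of Huebschmann (the reference \cite{hueb80-1}) and then used as a black box in the exactness proof for $\rho$ and $\lambda$. So there is no ``paper proof'' to match; what you have written is a self-contained argument, and it is correct. Your route is the standard cocycle-level one: you invoke the classification of crossed extensions of $Q$ by $B$ via $H^3(Q,B)$ (which the paper itself takes as known, citing \cite[IV.5]{brow82-1}), identify ``equivalent to the zero extension'' with the vanishing of the class $\theta$ represented by $e(q_1,q_2,q_3)=\bigl(q_1\ast\ell(q_2,q_3)\bigr)\ell(q_1,q_2q_3)\ell(q_1q_2,q_3)^{-1}\ell(q_1,q_2)^{-1}$, and then translate in both directions. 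The $\Leftarrow$ direction is clean: with $s=h\circ\sigma$ and $\ell$ the factor set of $\sigma$, the crossed-module-morphism hypothesis converts $q\ast(-)$ into conjugation by $\sigma(q)$ in $E$ and the expression telescopes to $1$. The $\Rightarrow$ direction correctly normalizes $e$ to $1$ (using that $B=\Ker\delta$ is central in $C$ by the Peiffer identity, so twisting $\ell$ by a $B$-valued cochain changes $e$ exactly by a coboundary and leaves $\delta\ell$ unchanged), recognizes $(\phi_q,\ell)$ as a Schreier system via $\phi_{q_1}\phi_{q_2}=\mathrm{inn}(\ell(q_1,q_2))\phi_{q_1q_2}$ (a consequence of $\delta(c)\cdot c'=cc'c^{-1}$), and verifies that $h(c,q)=\delta(c)s(q)$ is a homomorphism and a morphism of crossed modules. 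The only ingredient you import wholesale is the well-definedness and completeness of the $H^3$-classification of crossed extensions; given that the paper already assumes this correspondence, that is a legitimate dependency, though it means your proof is really a reduction of Huebschmann's statement to that classification theorem rather than an argument from scratch.
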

Now we can prove exactness.
\begin{lemma}
The sequence
\[\xymatrix{\cdots \ar[r] & H^2(G,M)_1 \ar[r]^-{\rho} & H^1(Q,H^1(N,M)) \ar[r]^-{\lambda} & H^3(Q,M^N)}\] is exact.
\end{lemma}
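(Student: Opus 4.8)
The plan is to prove exactness at $H^1(Q,H^1(N,M))$ by establishing the two inclusions $\Image\rho\subseteq\Ker\lambda$ and $\Ker\lambda\subseteq\Image\rho$ separately, using the diagram (\ref{big_diagram}) and the crossed extension $\underline{e}$ coming from its second column as the central tool. Throughout I would work with a class represented by a derivation $D\in\mbox{Der}(Q,H^1(N,M))$ and the associated crossed extension $\underline{e}_D$ built by pulling back $\Pi$ along the twisted splitting $D\overline{s}$, so that $\hat\lambda(D)=[\underline{e}_D]$.

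For $\Image\rho\subseteq\Ker\lambda$: start from an extension $[\underline{e}]\in H^2(G,M)_1$, choose a partial splitting $s_0:N\to E$, and recall that $\widetilde\rho_0(\underline{e})(x)$ is the derivation $d_x$ with $i(d_x(n))=({}^xs_0)(n)s_0(n)^{-1}$. The key observation is that conjugation in $E$ gives, for each $x\in E$, an element of $\Aut^M(M\rtimes N)$ (via the identification $\gamma:M\rtimes N\xrightarrow{\sim}p^{-1}(N)$), and combined with $\chi(p(x))\in\Aut(N,M)$ this produces a homomorphism $E\to\Aut_G(\underline{e}_0)$ over $G$ lifting the conjugation data. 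Composing with $\Pi$ and passing to cokernels should yield a homomorphism $h:E/(\text{image of }M\rtimes N)\to\Out_G(\underline{e}_0)$ realizing the hypothesis of Huebschmann's lemma for the pullback crossed extension $\underline{e}_{D}$ with $D=\overline{\rho}_0(\underline{e})$; here $C=M\rtimes N$, $B=M^N$, $\Gamma=\Aut_G(\underline{e}_0)$, and the short exact sequence $1\to M\rtimes N\to E\to Q\to 1$ is exactly $p^{-1}(N)$ sitting inside $E$ over $Q$ after identification. Checking that $(\mathbb{1}_{M\rtimes N},h)$ is a crossed module homomorphism is where one must be careful, but it reduces to the defining property of the $\Aut_G(\underline{e}_0)$-action on $M\rtimes N$ via the first coordinate.

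For $\Ker\lambda\subseteq\Image\rho$: suppose $\hat\lambda(D)=0$, so by Huebschmann's lemma there is a short exact sequence $1\to M\rtimes N\to E\to Q\to 1$ with a crossed-module homomorphism $h:E\to\Pi^{-1}(D\overline{s}(Q))$ over $Q$ extending the identity on $M\rtimes N$. The task is to build from $E$ an honest extension $0\to M\to E'\to G\to 1$ in $H^2(G,M)_1$ with $\rho[E']=[D]$. I would pull back along $\pi:G\to Q$ to get $E\times_Q G$, which sits in $1\to M\rtimes N\to E\times_Q G\to G\to 1$; quotienting the copy of $N=\{0\}\times N\leq M\rtimes N$ (which is normal because $h$ lands in the crossed module and $N$ maps to the correct place) should produce $E'$ with kernel $M\rtimes N/N\cong M$ over $G$. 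That $E'$ is partially split over $N$ follows because the sequence splits there by construction of $\underline{e}_0$; that $\rho[E']=[D]$ follows by unwinding the definition of $\widetilde\rho_0$ against the fact that $h$ intertwines conjugation in $E$ with the action realized by $D\overline{s}$.

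The main obstacle I anticipate is the bookkeeping in the second inclusion: producing the extension $E'$ from the abstract data of Huebschmann's lemma requires correctly identifying which normal subgroup to quotient by and verifying the $G$-module structure on $M=M\rtimes N/N$ matches the given one, and then tracing through the definitions to confirm $\overline{\rho}_0(\underline{e}_{E'})=D$ exactly (not merely up to a coboundary). A secondary difficulty is checking in the first inclusion that the conjugation-induced map $E\to\Aut_G(\underline{e}_0)$ really is a group homomorphism with the required compatibilities — this hinges on the fact (used implicitly in Section~\ref{second_map}) that the action of $E$ on partial splittings is compatible with $\gamma$ and descends correctly to $\Out_G(\underline{e}_0)$. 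Both steps are essentially diagram chases once the correct objects are in place, and Huebschmann's lemma does the heavy lifting of converting the vanishing of a degree-three class into a concrete splitting-type statement.
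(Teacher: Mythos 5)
Your first inclusion ($\Image\rho\subseteq\Ker\lambda$) follows the paper's route: conjugation on $p^{-1}(N)\cong M\rtimes N$ paired with $p':E\to G$ gives, by the pull-back property of $\Aut_G(\underline{e}_0)$, a homomorphism $h:E\to\Aut_G(\underline{e}_0)$ with $h\circ\gamma=\beta$. But two points are off. Huebschmann's lemma requires a homomorphism defined on $E$ itself and landing in $\Gamma=\Pi^{-1}(D\overline{s}(Q))$ --- not a map induced on the cokernel $E/(M\rtimes N)$, and not a map into all of $\Aut_G(\underline{e}_0)$ (the latter would only trivialize the big crossed extension over $\Out_G(\underline{e}_0)$, which is not the statement $\lambda[D]=0$ in $H^3(Q,M^N)$). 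The step you leave implicit is exactly where $D=\overline{\rho}_0(\underline{e})$ enters: one must verify $\Pi\circ h=D\overline{s}\circ(\pi\circ p')$, i.e.\ that $h(E)\subseteq\Pi^{-1}(D\overline{s}(Q))$; the paper does this by writing $h(x)=i_1(d)s(p'(x))$ and identifying $d=\widetilde{\rho}_0(\underline{e})(x)$ via the action on $H_0$.

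The second inclusion has a genuine gap. You form $E\times_Q G$ and quotient by the copy $\{0\}\times N\leq M\rtimes N$, asserting normality ``because $h$ lands in the crossed module.'' That subgroup is not normal in general: since $(\mathbb{1},h)$ is a homomorphism of crossed modules, conjugation by $x\in E$ on $i(M\rtimes N)$ agrees with the automorphism $h(x)\in\Aut^M(M\rtimes N)$, which sends $(0,n)$ to $(d(n),\phi(n))$ where $d$ is precisely the derivation attached to $x$ by $\widetilde{\rho}_0$; this $d$ is nontrivial exactly when the class $[D]$ is nontrivial, so requiring $\{0\}\times N$ to be normal would confine you to classes in the image of $\infl$ rather than recovering a general preimage under $\rho$. (The copy of $N$ that \emph{is} normal in $E\times_Q G$ is $1\times N$, the kernel of the first projection, but the map to $G$ does not descend to that quotient, so this does not rescue the construction.) The paper avoids the issue entirely: no pull-back and no quotient are taken. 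One uses $E$ itself and sets $p'=p_1\circ h:E\to G$; since $\pi\circ p'=p_2\circ\Pi\circ h=\Phi\circ h=p$ and $p'\circ i=p_1\circ\beta$ is the projection $M\rtimes N\to N\hookrightarrow G$, the kernel of $p'$ is exactly $i(M)$, yielding directly a partially split extension $0\to M\to E\to G\to 1$, and the crossed-module compatibility of $h$ gives $\widetilde{\rho}_0(\underline{e})(x)=d$ with $D(p(x))=[d]$, hence $\rho[\underline{e}]=[D]$ on the nose. You should replace your pull-back-and-quotient step by this reinterpretation of $E$.
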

\begin{proof}
First suppose that $[D] \in H^1(Q,H^1(N,M))$ with $\lambda[D]=0$. This means that we can find a short exact sequence 
$\xymatrix{1 \ar[r]& M \rtimes N \ar[r]^-i & E \ar[r]^-p & Q \ar[r]& 1}$ and a homomorphism of crossed modules 
$(\mathbb{1}_{M\rtimes N},\,h : E \rightarrow \Pi^{-1}(\widetilde{Q}))$ such that the diagram
\begin{equation}\label{late_in_the_evening}
\xymatrix{ & 1 \ar[r]& M \rtimes N \ar@{=}[d]\ar[r]^{i}& E \ar[d]^{h}\ar[r]^{p}& Q \ar@{=}[d] \ar[r]& 1\\
0 \ar[r]& M^N \ar[r]& M \rtimes N \ar[r]^{\beta} & \Pi^{-1}(\widetilde{Q}) \ar[r]^-{\Phi} & Q\ar[r] & 1,}
\end{equation}
with $\widetilde{Q}=D\overline{s}(Q)$, is commutative. We want to find an extension of $G$ by $M$ that is partially split, such that $[D]$ is the image of the extension under $\rho$. 

Let $p': E \rightarrow G$ be the composition of $h$ with the map $p_1: \Aut_G(\underline{e}_0) \rightarrow G$ and observe that $\pi \circ p' = p$. The kernel of $p'$ is easily seen to be $M$, so we find an extension
\[\underline{e}:\xymatrix{0 \ar[r] & M \ar[r]^{i'} & E \ar[r]^{p'} & G \ar[r] & 1,}\] where $i'$ is the composition $M \hookrightarrow M \rtimes N \stackrel{i}{\rightarrow} E$. One can show that the induced action of $G$ on $M$ coincides with the given action. 
Clearly, $p'^{-1}(N)$ is isomorphic to $M \rtimes N$ through $i$. 
We show that this extension is the extension we need.

Write $[D_{\underline{e}}]=\rho[\underline{e}]$. We fix the partial splitting $s_0: N\hookrightarrow M\rtimes N \stackrel{i}{\rightarrow}E$ of $\underline{e}$ over $N$. We know that $D_{\underline{e}}(q)=[\widetilde{\rho}_0(\underline{e})(x)]$ for $\pi \circ p'(x)=q$ or equivalently, $p(x)=q$. Using the description of $\rho$ on semi-direct complements, we see that $\widetilde{\rho}_0(\underline{e})(x)$ is the derivation associated to the semi-direct complement ${}^{x}H_0=i^{-1}(xi(H_0)x^{-1})$. Here $H_0$ is the standard partial semi-direct complement of $N$ in the standard split extension. Since $h$ is a homomorphism of crossed modules, ${}^{x}H_0$ equals ${}^{h(x)}H_0$. 

From the commutative diagram (\ref{late_in_the_evening}), we deduce that $\Pi \circ h (x) = D \overline{s} \circ p (x)$, so \begin{equation} \label{eq:pi} \Pi \circ h (x) = \Pi (i_1(d) s(g))\end{equation} for $D(p(x))=[d]$ and $\pi(g)=p(x)$. As the reader can easily check, ${}^{\beta(m,n)}H_0 \sim H_0$, so it follows that the action of $\Aut_G(\underline{e}_0)$ on $[H_0]$ factors through $\Out_G(\underline{e}_0)$. 
Equation (\ref{eq:pi}) shows that \[{}^{x}H_0 \sim {}^{i_1(d) s(g)}H_0={}^{i_1(d) }H_0={}^{\alpha(d) }H_0,\] and the last one has associated derivation $d$. It follows directly that $[\widetilde{\rho}_0(\underline{e})(x)]=[d]=D(p(x))$, so $D_{\underline{e}}(p(x))=D(p(x))$. %This finishes the proof. 

Conversely, consider a partially split extension 
\[\underline{e}:\xymatrix{0 \ar[r] & M \ar[r]^{i'} & E \ar[r]^{p'} & G \ar[r] & 1}\]
and a given partial splitting $s_0 : N\rightarrow E$ of $\underline{e}$ over $N$. Let $D=\overline{\rho}_0(\underline{e})$ and let $\gamma : M \rtimes N \rightarrow p'^{-1}(N)$ 
be the isomorphism associated with $s_0$ as before. 
By defining $p=\pi \circ p'$ and observing that $\Ker p = \gamma(M \rtimes N)$, we obtain an exact sequence
\[\xymatrix{ & 1 \ar[r]& M \rtimes N \ar[r]^-{\gamma}& E \ar[r]^{p}& Q  \ar[r]& 1.}\] 

We construct a homomorphism $h : E \rightarrow \Aut_G(\underline{e}_0)$, from $h_1 = p': E \rightarrow G$ and $h_2 : E \rightarrow \Aut^M(M \rtimes N)$ sending $x$ to $h_2(x)$, with $h_2(x)(m,n)=\gamma^{-1}(x \gamma (m,n) x^{-1})$. 
The compositions of $h \circ \gamma$ with respectively  $\Aut_G(\underline{e}_0) \rightarrow G$ and $\Aut_G(\underline{e}_0) \rightarrow \Aut^M(M \rtimes N)$ equal the compositions of $\beta$ with these two maps. By the pull-back property, this means that $h \circ \gamma=\beta$. 

We know that $h(x)=i_1(d)s(p'(x))$ for some derivation $d$. Since ${}^{h(x)}H_0={}^{x}H_0$, it is clear that $d=\widetilde{\rho}_0(\underline{e}_0)(x)$. It follows that $\Pi \circ h(x)=D \overline{s}(p(x))$, so $h(E) \subset \Pi^{-1}(\widetilde{Q})$ with $\widetilde{Q}=D \overline{s}(Q)$ and we get a commutative diagram
\[\xymatrix{ & 1 \ar[r]& M \rtimes N \ar@{=}[d]\ar[r]^{\gamma}& E \ar[d]^{h}\ar[r]^{p}& Q \ar@{=}[d] \ar[r]& 1\\
0 \ar[r]& M^N \ar[r]& M \rtimes N \ar[r]^{\beta} & \Pi^{-1}(\widetilde{Q}) \ar[r] & Q\ar[r] & 1,}\] 
where the left square is a homomorphism of crossed modules. This shows that $\lambda[D]=0$ and the sequence is exact. 
\end{proof}

\begin{remark}
Using spectral sequence arguments, one can see that in case $H^2(N,M)^Q=0$, the  seven-term sequence can be extended to the following eight-term exact sequence: 
\[\xymatrix{0 \ar[r] & H^1(Q,M^N) \ar[r]^-{\infl} & H^1(G,M) \ar[r]^-{\res} & H^1(N,M)^Q \ar[r]^-{\tr} & H^2(Q,M^N)& \\  \ar[r]^-{\infl} & H^2(G,M) \ar[r]^-{\rho} & H^1(Q,H^1(N,M))\ar[r]^-{\lambda} & H^3(Q,M^N) \ar[r]^-{\infl} & H^3(G,M).}\]
\end{remark}

Since $\lambda$ coincides with the differential $d_2^{1,1}$ of the spectral sequence, this follows from spectral sequence arguments as introduced in \cite{hs53-1}. 

\section{Naturality of the sequence}\label{naturality_sequence}
We already know that the maps in the seven-term exact sequence are natural with respect to the modules. Here we show that the maps are also natural with respect to the short exact sequence of groups. Let 
\[\xymatrix{0 \ar[r] & N' \ar[d]^{\alpha_0} \ar[r] & G' \ar[d]^{\alpha}\ar[r]^{\pi'} & Q' \ar[d]^{\overline{\alpha}}\ar[r] & 1\\
0 \ar[r] &  N \ar[r] & G \ar[r]^{\pi} & Q \ar[r] & 1}\]
be a morphism of group extensions. Take a $G$-module $M$. Then $M$ is also a $G'$-module through $\alpha$. Observe that now automatically $M^N \subset M^{N'}$, and call the inclusion $j : M^N \rightarrow M^{N'}$. 

The naturality of $\lambda$ has already been shown in \cite{hueb81-2}. We show that $\tr$ and $\rho$ are also natural with respect to the short exact sequence of groups. 

To prove the naturality of $\tr$, one has to show the commutativity of 
\[\xymatrix{H^1(N,M)^Q \ar[r]^{\tr} \ar[d]^{\alpha_0^*} & H^2(Q,M^N) \ar[d]\\
H^1(N',M)^{Q'} \ar[r]^{\tr} & H^2(Q',M^{N'}).}\]
The right hand map is the composition of $\alpha^* : H^2(Q,M^N) \rightarrow H^2(Q',M^N)$ and $j_*: H^2(Q',M^N) \rightarrow H^2(Q',M^{N'})$.
Take $[d] \in H^1(N,M)^Q$. Then automatically $\alpha_0^*[d]=[d \circ \alpha_0] \in H^1(N',M)^{Q'}$, since ${}^{g'}(d \circ \alpha_0)={}^{\alpha(g')}d \circ \alpha_0$. 
To find the image of $\tr[d]$ under $j_* \circ \alpha^*$, we first take a pull-back and then a push-out of the sequence representing $[\underline{e}_d]=\tr[d]$.
This means there is a diagram
\[\xymatrix{\underline{e}_d: & 0 \ar[r] & M^N \ar[r] & N_{M\rtimes G}(H)/H \ar[r] & Q \ar[r] & 1 \\
\alpha^*(\underline{e}_d) : & 0 \ar[r] & M^{N} \ar[d]^{j}\ar@{=}[u] \ar[r] & P \ar[u] \ar[d] \ar[r] & Q' \ar[r]\ar[u]^{\alpha} & 1\\
\underline{e} : & 0 \ar[r] & M^{N'}  \ar[r] & E \ar[r] & Q' \ar@{=}[u] \ar[r] & 1 ,}\] where the right upper square is a pull-back diagram, while the left lower square is a push-out construction. As before, we let $s_d(n)=(d(n),n) \in M \rtimes G$ for all $n \in N$ and $s_d'(n')=(d(\alpha(n')),n') \in M \rtimes G'$ for all $n' \in N'$, and 
set $H=s_d(N)$ and $H'=s'_d(N')$.

Observe that there exists a commutative diagram with exact rows
\[\xymatrix{0 \ar[r] & M \ar[r]^-{i'} \ar@{=}[d]& M \rtimes G' \ar[d]^{\mathbb{1} \rtimes \alpha} \ar[r]^-{p'} & G' \ar[r] \ar[d]^{\alpha} & 1 \\
0 \ar[r] & M \ar[r]^-{i} & M \rtimes G \ar[r]^{p} & G \ar[r] & 1. }\] 
Furthermore, $(m,g) \in N_{M \rtimes G}(H)$ iff for all $n \in N$, ${}^g d (n) - d(n)=n \cdot m -m$, and $(m,g') \in N_{M \rtimes G'}(H')$ iff for all $n' \in N'$, ${}^{g'} (d \circ \alpha_0) (n') - (d \circ \alpha_0)(n')=\alpha(n') \cdot m -m$. Now it is easy to see that $(\mathbb{1} \rtimes \alpha)^{-1}(N_{M \rtimes G}(H)) \subseteq N_{M \rtimes G'}(H')$. Set $S = (\mathbb{1} \rtimes \alpha)^{-1}(N_{M \rtimes G}(H))$. Observe that  $i'(m) \in S$ iff $m \in M^N$, and $p'(S)=G'$, since $p : N_{M \rtimes G}(H) \rightarrow G$ is surjective.  This means that the lower sequence in the diagram
\[\xymatrix{0 \ar[r] & M^{N'} \ar[r] & N_{M\rtimes G'}(H') \ar[r] & G' \ar[r] & 1\\
0 \ar[r] & M^N \ar[r]^{i'} \ar[u]^{j} & S \ar[r]^{p'} \ar[u] & G' \ar@{=}[u]\ar[r] & 1}\] is exact.
It is also clear that $(\mathbb{1} \rtimes \alpha)(H')\subseteq H$, so $H' \subseteq S$ and even $H' \triangleleft S$, since $S \subset N_{M \rtimes G'}(H')$. It follows that we get an exact sequence 
\[\xymatrix{0 \ar[r] & M^N \ar[r]^-{\overline{i'}} & S / H' \ar[r]^-{\overline{p'}} & Q' \ar[r] & 1}\] and we claim that this sequence is equivalent to the sequence $\alpha^*[\underline{e}_d]$. 
Take $\phi_1: S / H' \rightarrow N_{M \rtimes G}(H)/H$, mapping $(m,g')H'$ to $(m,\alpha(g'))H$ and $\phi_2: S / H' \rightarrow Q'$, $\phi_2 = \overline{p'}$. It is easy to see that the maps are well-defined and that, by the universal property, we obtain a map $\phi : S /H' \rightarrow P$. 
Now one only has to check that the diagram 
\[\xymatrix{ & 0 \ar[r] & M^N \ar[r]^-{\overline{i'}} \ar@{=}[d] & S / H' \ar[d]^{\phi} \ar[r]^{\overline{p'}} & Q' \ar@{=}[d]\ar[r] & 1 \\
\alpha^*(\underline{e}_d) : & 0 \ar[r] & M^N \ar[r] & P \ar[r] & Q' \ar[r] & 1}\] is commutative, but that is not difficult.

Since the diagram 
\[\xymatrix{\underline{e}_d': & 0 \ar[r] & M^{N'} \ar[r] & N_{M \rtimes G'}(H')/ H' \ar[r] & Q' \ar[r] & 1 \\
& 0 \ar[r] & M^N \ar[r] \ar[u]^{j} & S / H' \ar[u]\ar[r] & Q' \ar@{=}[u] \ar[r] & 1}\]
is commutative, the upper row $\underline{e}_d'$, representing $\tr(\alpha^*[d])$, is the push-out construction of the lower row, equivalent to $\alpha^*(\underline{e}_d)$. 
It follows that $[\underline{e}]=\tr(\alpha^*[d])$, so the map $\tr$ is natural. 

To prove that $\rho$ is natural, we have to show that 
\begin{equation}\label{diagr}\xymatrix{H^2(G,M)_1 \ar[r]^-{\rho} \ar[d]^{\alpha^*} & H^1(Q,H^1(N,M)) \ar[d]^{\alpha^*}\\
H^2(G',M)_1 \ar[r]^-{\rho} & H^1(Q',H^1(N',M))}\end{equation} commutes. 
Observe that the right hand map is the composition 
\[\xymatrix{H^1(Q,H^1(N,M)) \ar[r]^{\overline{\alpha}^*} & H^1(Q',H^1(N,M))\ar[r]^{H^1(\mathbb{1},\alpha_0^*)}& H^1(Q',H^1(N',M)).}\]Take $[\underline{e}] \in H^2(G,M)_1$, with 
\[\xymatrix{\underline{e}: & 0 \ar[r] & M \ar[r]^{i} & E \ar[r]^{p} & G \ar[r] & 1,}\]
and fix a partial splitting $s_0: N \rightarrow E$ of $\underline{e}$ over $N$. We know that $\alpha^*[\underline{e}]$ can be represented by the lower row in the diagram
\[\xymatrix{\underline{e}: & 0 \ar[r] & M \ar[r]^{i} \ar@{=}[d] & E \ar[r]^{p} & G \ar[r] & 1\\
\underline{e}': & 0 \ar[r] & M \ar[r]^{i'} & P \ar[r]^{p'} \ar[u]_{h} & G' \ar[u]_{\alpha} \ar[r] & 1,}\]
 where the right hand square is a pull-back square. By the universal property of the pull-back, we find a partial splitting $s_0' : N' \rightarrow P$ of $\underline{e}'$ over $N$, such that $h \circ s_0' = s_0 \circ \alpha_0$. Take $q' \in Q'$ and fix an element $x' \in P$ such that $\pi ' \circ p' (x')=q'$. Then $\rho (\alpha^*[\underline{e}])$ can be represented by the derivation $D_1$, mapping $q'$ to the class of the derivation $\widetilde{D}_1(x')$, with $i'(\widetilde{D}_1(x')(n'))=({}^{x'}s_0')(n') s_0'(n')^{-1}$. Straight-forward calculations show that $i(\widetilde{D}_1(x')(n'))=$ $({}^{h(x')} s_0)(\alpha_0(n')) s_0(\alpha_0(n'))^{-1}$.

Now take a look at $\alpha^*(\rho[\underline{e}])$. We can represent $\rho[\underline{e}]$ by
a derivation $D_2$, that maps $q \in Q$ to the class of the derivation $\widetilde{D}_2(x)$, with $\pi \circ p(x)=q$ and $i(\widetilde{D}_2(x)(n))=({}^{x}s_0)(n)s_0(n)^{-1}$ for all $n \in N$. Applying the right-hand map of (\ref{diagr}) to $\rho[\underline{e}]$, we obtain an element that can be represented by a derivation $D_3$, sending $q' \in Q'$ to the class of $\widetilde{D}_2(x) \circ \alpha_0$, where $\pi \circ p(x)=\overline{\alpha}(q')$. Observe that we can choose $x=h(x')$, with $\pi ' \circ p' (x')=q'$. 
Now $i(\widetilde{D}_2(h(x')) (\alpha_0(n')))=({}^{h(x')}s_0)(\alpha_0(n'))s_0(\alpha_0(n'))^{-1}$. It follows that the diagram (\ref{diagr}) commutes, thus $\rho$ is natural. 

\section{More on $tr$}\label{Evens_dinges}

Suppose we take a $G$-module $M$ that is $N$-invariant, so the $G$-action induces a well-defined $Q$-action on $M$. In this case, the equivalence relation on derivations is trivial, and $H^1(N,M)^Q$ can be identified with $\mbox{Hom}_G(N/N',M)$, where $N'$ is the commutator subgroup of $N$. The $G$-action on $N/N'$ is induced by conjugation.

\begin{lemma}\label{lem_evens_ding}
Let $M$ be an $N$-invariant $G$-module, and take $[d]=d \in H^1(N,M)^Q \cong \mbox{Hom}_G(N/N',M)$. Then $\tr[d]=-d_*[\epsilon]$, or equivalently, $\tr[-d]=d_*[\epsilon]$, with 
\[\epsilon:\xymatrix{0 \ar[r] & N/N' \ar[r] & G/N' \ar[r] & Q \ar[r] & 1.}\]
\end{lemma}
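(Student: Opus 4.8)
The plan is to apply the explicit description of $\tr$ from Section~\ref{first_map} to the special case at hand. Since $M$ is $N$-invariant, $M = M^N$ and the equivalence relation $\sim$ on semi-direct complements collapses (there is nothing to conjugate away in a trivial $N$-action), so $\hat\tr = \tr$ is computed on the honest semi-direct complement $H_d = s_d(N) = \{(d(n),n) \mid n \in N\} \le M \rtimes G$ associated to the derivation $d$. First I would check that $H_d$ really lies in $\sdc(N,M)^Q$, i.e.\ that ${}^{(m,g)}H_d \sim H_d$ for all $(m,g)$: this is immediate, since $d \in H^1(N,M)^Q$ precisely means ${}^g d - d$ is inner, and inner derivations into an $N$-trivial module vanish, so in fact ${}^g H_d = H_d$ for all $g$ (the normalizer is all of $M \rtimes G$). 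Then $\tr[d] = \omega(\underline{e}_0, H_d)$ is the class of the extension
\[\underline{e}': \xymatrix{0 \ar[r] & M^N \ar[r]^-{\overline i} & N_{M\rtimes G}(H_d)/H_d \ar[r]^-{\overline p} & Q \ar[r] & 1}.\]

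The main computational step is to identify $N_{M\rtimes G}(H_d)/H_d$ explicitly. Since ${}^gH_d = H_d$ for all $g$, the normalizer condition $(m,g) \in N_{M\rtimes G}(H_d)$ reduces (the action being trivial on $M$) to: conjugation by $(m,g)$ fixes each $(d(n),n)$, which unwinds to the requirement that $d$, viewed on $N$, together with the inner derivation $n \mapsto n\cdot m - m = 0$ match up — so actually $N_{M\rtimes G}(H_d) = M \rtimes G$ when $M$ is $N$-invariant, provided one is careful that conjugating $(d(n),n)$ by $(m,g)$ lands back in $H_d$, which it does because ${}^g d = d$ modulo inner (hence on the nose, $N$-trivially) and the $M$-part shifts by something of the form $d(gng^{-1}) - $ the image, landing in $H_d$ again. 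Thus $\tr[d]$ is the class of $(M\rtimes G)/H_d \to Q$. Now I would set up the comparison: the map $M \rtimes G \to G/N'$, $(m,g) \mapsto \overline{g}$... more precisely, build an isomorphism of extensions between $(M\rtimes G)/H_d$ and the push-out of $\epsilon$ along $d_*$, i.e.\ $d_*[\epsilon]$ (up to the sign). The point is that $H_d$ contains $\{(d(n),n)\}$, and modding out identifies $n \in N$ with $-d(n) \in M$ (the sign comes from the side on which $M$ sits in the semi-direct product), which is exactly the relation defining the push-out $(M \oplus G/N')/\{(d(\bar n), -\bar n)\}$ used in Section~\ref{pb_po} for $(d)_*: H^2(Q, N/N') \to H^2(Q,M)$ applied to $[\epsilon]$. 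Tracking which generator gets the minus sign yields $\tr[d] = -d_*[\epsilon]$.

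Concretely the steps are: (1) verify $H_d \in \sdc(N,M)^Q$ and that $N_{M\rtimes G}(H_d) = M \rtimes G$ using $N$-invariance; (2) write down the resulting extension $\underline{e}' = (M\rtimes G)/H_d \twoheadrightarrow Q$ with kernel $M$; (3) recall from Section~\ref{pb_po} the explicit model of the push-out $d_*[\epsilon]$ as a quotient of $M \rtimes (G/N')$ — note $G/N'$ makes sense since $d$ kills $N'$, being a homomorphism on $N/N'$ and the abelianization being forced — by the subgroup $\{(-d(\bar n), \bar n)\}$; (4) exhibit the map $(M\rtimes G)/H_d \to$ (that push-out) induced by $(m,g) \mapsto (m, gN')$, check it is a well-defined isomorphism of extensions over $Q$ inducing the identity on $M$, and read off the sign. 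The "equivalently" clause $\tr[-d] = d_*[\epsilon]$ is then immediate from $\tr$ being a homomorphism and $(-d)_* = -d_*$.

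The main obstacle I anticipate is bookkeeping of the sign and of the precise module over which each object is built: one must be careful that $\epsilon$ is an extension with kernel the $G$-module $N/N'$ (conjugation action), that $d: N/N' \to M$ is a $G$-module map precisely by the hypothesis $d \in \mathrm{Hom}_G(N/N', M)$, and that the semi-direct product convention $M \rtimes G$ (with $M$ written on the left) is what forces the minus sign when one reads the relation $(d(n),n) \sim (0,1)$ as "$n \equiv -d(n)$". Everything else is the routine five-lemma / universal-property verification already used repeatedly in the remarks following Lemma~\ref{lem:surjective_invariant}.
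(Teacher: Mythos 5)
Your proposal follows essentially the same route as the paper: identify $N_{M\rtimes G}(H)=M\rtimes G$ using $N$-invariance, so that $\tr$ is represented by $(M\rtimes G)/H\to Q$, and then match this quotient against the explicit push-out model of $d_*[\epsilon]$ from Section~\ref{pb_po}. The outline and the final formula are correct, and your step (1) and the identification ``$n\equiv -d(n)$ in $(M\rtimes G)/H_d$'' are right. However, step (4) as literally written would fail: the map $(m,g)\mapsto (m,gN')$ does \emph{not} descend to a map from $(M\rtimes G)/H_d$ to the push-out along $d$, because $H_d=\{(d(n),n)\}$ is sent to $\{(d(n),nN')\}$, whereas that push-out is the quotient by $S=\{(-d(\bar n),\bar n)\}$; these subgroups differ unless $2d=0$. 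The relation $n\equiv -d(n)$ you correctly extract from killing $H_d$ is the defining relation of the push-out along $-d$, not along $d$. So the comparison map must target the push-out along $-d$ (which represents $-d_*[\epsilon]$, giving $\tr[d]=-d_*[\epsilon]$), or equivalently one should start from $H_{-d}=s_{-d}(N)=\{(-d(n),n)\}$ and map to the push-out along $d$ via $(m,g)s_{-d}(N)\mapsto (m,gN')S$ --- which is exactly what the paper does, proving the statement in the form $\tr[-d]=d_*[\epsilon]$ and avoiding the sign juggling. This is the sign bookkeeping you yourself flagged as the main obstacle; once that one target is corrected, the rest of your argument goes through.
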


\begin{proof}
The partial splitting corresponding to $(-d)$ is $s_{-d} : N \rightarrow M \rtimes G:\;n \mapsto (-d(n),n)$. A representant of $\tr[-d]$ is given by 
\[\underline{e}:\xymatrix{0 \ar[r] & M \ar[r]^-{i} & (M\rtimes G)/s_{-d}(N) \ar[r]^-{p} & Q \ar[r] & 1.}\] 
On the other hand, the extension $d_*[\epsilon]$ can be represented by the second row of the following diagram, where $E$ is the push-out construction.
\[\xymatrix{0 \ar[r] & N/N' \ar[d]^d \ar[r] & G/N' \ar[d]\ar[r] & Q \ar@{=}[d]\ar[r] & 1\\
0 \ar[r] & M \ar[r]^{i'} & E \ar[r]^{p'} & Q \ar[r] & 1}\]

Observe that $E=\big( M \rtimes (G/N') \big) \big/ S$, where $S=\{(-d(n),nN') \in M \rtimes (G/N') \ | \ n \in N\}$. Take $\rho : (M \rtimes G) / s_{-d}(N) \rightarrow E$, mapping $(m,g)s_{-d}(N)$ to $(m,gN')S$. One easily checks that this is an equivalence of extensions, so $\tr[-d]=d_*[\epsilon]$.
\end{proof}

This is exactly the same result as described in Theorem 7.3.1 in \cite{even91-1}, so it means that at least for $M=M^N$, the map $\tr$ we have constructed in this paper coincides with the map induced by the spectral sequence.

\section{Cocycle description}\label{cocycle_description}

Fix a section $\alpha : Q \rightarrow G$ for $\pi$ and let $f_{\alpha} : Q \times Q \rightarrow N$ denote the associated factor set, i.e.\
$\alpha(q)\alpha(q')=f_{\alpha}(q,q')\alpha(qq')$.

\subsection{The map $tr$}
Take an element $[d] \in H^1(N,M)^Q$, the associated partial splitting $s: N \rightarrow M \rtimes G$, and the associated partial semi-direct complement $H = s(N)$. Since $[d]$ is $Q$-invariant, we obtain an exact sequence
\[\xymatrix{0 \ar[r] & M^N \ar[r]^{i \ \ \ } & N_{M \rtimes G}(H) \ar[r] & G \ar[r] & 1,}\] and we choose a section $\widetilde{s} : G \rightarrow N_{M \rtimes G}(H)$, not necessarily a homomorphism, that extends $s$. There is an associated factor set $f_s : G \times G \rightarrow M^N$ such that 
$\widetilde{s}(g)\widetilde{s}(g')=i(f_s(g,g'))\widetilde{s}(gg')$.
We define a section  $\overline{s} : Q \rightarrow N_{M\rtimes G}(H) / H$ of 
\[\underline{e}_H : \xymatrix{0 \ar[r] & M^N \ar[r]^-{\overline{i} \ \ \ \ } & N_{M \rtimes G}(H)/H \ar[r] & Q \ar[r] & 1}\] as $\overline{s}(q)=\widetilde{s}(\alpha(q))H$. 
Now take $q_1, \ q_2 \in Q$. One easily shows that
\begin{eqnarray*}
\overline{s}(q_1)\overline{s}(q_2) & = &  \overline{i}\Big(f_s(\alpha(q_1),\alpha(q_2))-f_s(f_{\alpha}(q_1,q_2),\alpha(q_1q_2))\Big) \overline{s}(q_1q_2).
\end{eqnarray*}
We conclude that $\tr[d]=[F]$, where $F : Q \times Q \rightarrow M^N$ is the cocycle
\[F(q_1,q_2)=f_s(\alpha(q_1),\alpha(q_2))-f_s(f_{\alpha}(q_1,q_2),\alpha(q_1q_2)).\] 
In general, one can choose a section $\widetilde{s} : G \rightarrow N_{M \rtimes G}(H)$ that does not necessarily extend $s$. In this case, the image $\tr[d]$ can be represented by the cocycle $F : Q \times Q \rightarrow M^N$ with
\[F(q_1,q_2)=f_s(\alpha(q_1),\alpha(q_2))-f_s(f_{\alpha}(q_1,q_2),\alpha(q_1q_2))+\overline{i}^{-1}(\widetilde{s}(f_{\alpha}(q_1,q_2))H).\] 
Note that one can use this to show that $\tr \equiv 0$ if $M = M^N$ and $\alpha$ is a homomorphism, taking $\widetilde{s}(g)=(0,g)$.

\begin{remark}
In \cite{rous06-1}, Rousseau gave an ad hoc construction of a map $H^1(N,M)^{Q} \rightarrow H^2(Q,M^N)$ on the cocycle level, rendering the five-term sequence exact. It turns out to coincide with the above cocycle description of our map $\tr$. 
\end{remark}

\subsection{The map $tr$, second description}
It is easy to prove the following lemma.

\begin{lemma} \label{lem:normalization}
Take a derivation $d : N \rightarrow M$ and set $H=\{(d(n),n) \ | \ n \in N\}$, the associated partial semi-direct complement. For $(m,g) \in M \rtimes G$, the following holds: $(m,g) \in N_{M \rtimes G}(H)$ iff $({}^g d-d)(n)=n \cdot m -m$ for all $n \in N$.
\end{lemma}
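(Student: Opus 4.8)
The plan is to unwind the definition of the normalizer directly in the semi-direct product $M\rtimes G$, using the explicit group law. Recall that in $M\rtimes G$ the product is $(m,g)(m',g')=(m+g\cdot m',\,gg')$ and the inverse is $(m,g)^{-1}=(-g^{-1}\cdot m,\,g^{-1})$. First I would fix $(m,g)\in M\rtimes G$ and a generic element $(d(n),n)\in H$ with $n\in N$, and simply compute the conjugate $(m,g)(d(n),n)(m,g)^{-1}$ in three bracket-steps. The key observation is that the $G$-component of this conjugate is $gng^{-1}$, which already lies in $N$ since $N\trianglelefteq G$; so the conjugate lies in $H$ if and only if its $M$-component equals $d(gng^{-1})$.

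Next I would carry out the bracket computation carefully: conjugating $(d(n),n)$ by $(m,g)$ yields an element whose $M$-component is $m + g\cdot d(n) - (gng^{-1})\cdot m$ (one has to be a little careful, since the standard action in $M\rtimes G$ is by $g$, and here $M$ is a $G$-module via the ambient action; the computation $(m,g)(d(n),n)(m,g)^{-1}$ gives $M$-part $m + g\cdot d(n) + (gn)\cdot(-g^{-1}\cdot m) = m + g\cdot d(n) - (gng^{-1})\cdot m$). Setting this equal to $d(gng^{-1})$ gives the condition
\[ g\cdot d(n) - d(gng^{-1}) = (gng^{-1})\cdot m - m \quad\text{for all }n\in N. \]
Then I would reparametrize: as $n$ ranges over $N$ so does $n' := gng^{-1}$, and $g\cdot d(g^{-1}n'g)=({}^g d)(n')$ by the formula for the $G$-action on derivations recalled in section \ref{der_sec_sdc}. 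Substituting, the condition becomes $({}^g d)(n') - d(n') = n'\cdot m - m$ for all $n'\in N$, which is exactly the asserted characterization.

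The only mildly delicate point — and the thing I would be most careful about — is keeping the module actions straight: the action appearing inside $M\rtimes G$ on the $M$-coordinate is the restriction of the ambient $G$-action, and one must recognize $g\cdot d(g^{-1}(-)g)$ as the derivation $({}^g d)$ from the earlier formula $({}^g d)(n) = g\cdot d(g^{-1}ng)$. There is also a harmless sign/ordering choice depending on whether one writes $H=\{(d(n),n)\}$ with the convention $s(n)=(d(n),n)$; once that is fixed the computation is forced. Everything else is the routine semi-direct-product bracket arithmetic that the statement already flags as "easy to prove", so I would present the conjugation calculation compactly and then read off the equivalence. No separate obstacle is expected; this is a direct verification.
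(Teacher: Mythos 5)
Your computation is correct and is exactly the routine verification the paper intends (the lemma is stated with proof left to the reader): conjugating $(d(n),n)$ by $(m,g)$ gives $M$-component $m+g\cdot d(n)-(gng^{-1})\cdot m$, and the reparametrization $n\mapsto gng^{-1}$ together with the bijectivity of conjugation on $N$ turns the containment ${}^{(m,g)}H\subseteq H$ into equality and yields precisely $({}^g d-d)(n)=n\cdot m-m$. No gaps.
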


Take a derivation $d : N \rightarrow M$ such that $[d] \in H^1(N,M)^Q$ and 
fix a normalized map $\eta : Q \rightarrow M$ for which $(^{\alpha(q)} d - d)(n) =n \cdot \eta(q) - \eta(q)$, so that $(\eta(q),\alpha(q))\in N_{M \rtimes G}(H)$ by the previous lemma, where $H$ is defined as before. We claim that a representative cocycle $F : Q \times Q \rightarrow M^N$ of $\tr[d]$ is given as 
\[F(q_1,q_2)=\eta(q_1)+\alpha(q_1) \cdot \eta(q_2)-f_{\alpha}(q_1,q_2) \cdot \eta(q_1q_2)-d(f_{\alpha}(q_1,q_2)).\]
Indeed, we can take a section $\overline{s}:Q \rightarrow N_{M\rtimes G}(H)/H$ of the representative extension
\[\underline{e}_{H} : \xymatrix{0 \ar[r] & M^N \ar[r]^{\overline{i} \ \ \ \ } & N_{M \rtimes G}(H)/H \ar[r] & Q \ar[r] & 1}\]
of $tr[d]$, mapping $q$ to $\overline{s}(q)=(\eta(q),\alpha(q))H$. It is now an easy calculation to see that indeed $\overline{s}(q_1)\overline{s}(q_2)=\overline{i}(F(q_1,q_2)) \overline{s}(q_1q_2)$ for all $q_1$, $q_2 \in Q$, so $F$ is the cocycle associated to $\underline{e}_H$.

Observe that, if $M = M^N$, we can choose $\widetilde{\eta} = 0$. In this case, a representative of $\tr[d]$ is given by $F(q_1,q_2)=-d(f_{\alpha}(q_1,q_2))$. This gives an alternative proof of lemma \ref{lem_evens_ding}. As a corollary, $\tr \equiv 0$ if $M = M^N$ and $\alpha$ is a homomorphism (i.e.\ the sequence of groups is split exact). 

\begin{remark}
Observe that this second cocycle description of our transgression map $\tr$ coincides with an explicit ad hoc description of a map $H^1(N,M)^Q \rightarrow H^2(Q,M^N)$, making the five-term sequence exact, by Guichardet in \cite[$\mathsection{8}$]{guic80-1}. 
\end{remark}
\subsection{The map $\rho$}
Take an element $[f] \in H^2(G,M)_1$ such that $f : G \times G \rightarrow M$ is a cocycle with $f|_{N \times N} =0$. There is a partially split extension 
\[\underline{e}: \xymatrix{0 \ar[r] & M \ar[r] & M \times_f G \ar[r] & G \ar[r] & 1}\]
associated to $f$, where $M \times_f G$ is the set $M \times G$ with group law
\[(m,g)(m',g')=(m + g \cdot m' + f(g,g'),gg').\]
It is trivial to see that $M \times_f N$ is just the semi-direct product $M \rtimes N$. Take a section $s_0 : N \rightarrow M \times_f G$ defined as $s_0(n)=(0,n)$.
Observe that 
\[(m,g)(0,n)(m,g)^{-1}=\big(m+f(g,n)-gng^{-1}\cdot(m+f(g,g^{-1}))+f(gn,g^{-1}),gng^{-1}\big).\]
Remember that $({}^{(m,g)}s_0)(n)=(m,g)s_0(g^{-1}ng)(m,g)^{-1}$, so the derivation $d_{(m,g)}$ associated to ${}^{(m,g)}s_0$ is defined as
\[d_{(m,g)}(n)=m-n\cdot m+f(g,g^{-1}ng)-n \cdot f(g,g^{-1})+f(ng,g^{-1}).\]
Since $f$ is a cocycle, $-n \cdot f(g,g^{-1})+f(ng,g^{-1})=-f(n,g)$, so 
$\widetilde{\rho}_0(\underline{e})(m,g)=d_{(m,g)}$ with 
$d_{(m,g)}(n)=m - n \cdot m + f(g,g^{-1}ng)-f(n,g)$.
It is now easy to see that 
\[\overline{\rho}_0(\underline{e})(q)=[d_g]\] with $\pi(g)=q$ and $d_g(n)=f(g,g^{-1}ng)-f(n,g)$. This gives a complete description of $\rho(\underline{e})=[\overline{\rho}_0(\underline{e})]$.

\subsection{The map $\lambda$}
It is easy to see that, if we fix a cocycle $c : \Out_G(\underline{e}_0) \times \Out_G(\underline{e}_0) \times \Out_G(\underline{e}_0) \rightarrow M^N$ associated to the crossed extension
\[\underline{e}:\xymatrix{0 \ar[r] & M^N \ar[r] & M \rtimes N \ar[r]^{\beta} & \Aut_G(\underline{e}_0) \ar[r]^{\Pi} & \Out_G(\underline{e}_0) \ar[r] & 1,}\]
the image of $[D]$ under $\lambda$ can be represented by $\widetilde{c}$ with $\widetilde{c}(q_1,q_2,q_3)=c(D\overline{s}(q_1),D\overline{s}(q_2) ,D\overline{s}(q_3) )$. This comes from the fact that $\lambda[D]$ can be realized as $(D\overline{s})^*[\underline{e}]$.

On the other hand, we can try to find a direct cocycle description for $\lambda$.
Fix a section $s_2 : H^1(N,M) \rightarrow \mbox{Der}(N,M)$ of the quotient map $\mbox{Der}(N,M) \rightarrow H^1(N,M)$. Since the rows in diagram (\ref{big_diagram}) on page \pageref{big_diagram} are split exact, we can identify $\Aut_G(\underline{e}_0)\cong \mbox{Der}(N, M ) \rtimes G$ and $\Out_G(\underline{e}_0) \cong H^1(N,M) \rtimes Q$. 
Then $\widetilde{s}: \Out_G(\underline{e}_0) \rightarrow \Aut_G(\underline{e}_0)$, defined as $\widetilde{s}([d],q)=(s_2[d],\alpha(q))$
is a section of $\Pi$, so 
$\widetilde{s} \circ D\overline{s}  = (s_2 \circ D, \alpha)$, is a section of $\Phi : \Pi^{-1}(D\overline{s}(Q)) \rightarrow Q$.

We want to compute the map $f : Q \times Q \rightarrow M \rtimes N$, measuring the defect of $(s_2 \circ D,\alpha)$ being a homomorphism. In other words, we want that
\[\beta \circ f(q_1,q_2)\big(s_2D(q_1q_2),\alpha(q_1q_2)\big)=\big(s_2D(q_1),\alpha(q_1)\big)\big(s_2D(q_2),\alpha(q_2)\big).\]
Therefore we measure both the defect of $\alpha$ being a homomorphism (using $f_{\alpha}$), and the defect of $s_2 \circ D$ being a derivation via $\alpha$. Since  $s_2 D (q_1) + {}^{\alpha(q_1)} s_2 D (q_2) - s_2 D (q_1q_2)$ maps to zero in $H^1(N,M)$, it is an inner derivation for all $q_1, \, q_2 \in Q$ and we can fix a map $F' : Q \times Q \rightarrow M$ such that 
\[-\delta^0 (F'(q_1,q_2))=s_2 D (q_1) + {}^{\alpha(q_1)} s_2 D (q_2) - s_2 D (q_1q_2).\]
Using the relation ${}^n d = d + \delta^0(d(n))$, we see that 
\[\beta \circ f(q_1,q_2)=\Big(-\delta^0(F'(q_1,q_2) + \big(s_2 \circ D(q_1q_2)\big) \big(f_{\alpha}(q_1,q_2)\big)), f_{\alpha}(q_1,q_2)\Big),\] and we can choose 
\[f(q_1,q_2)=\Big( F'(q_1,q_2) + \big(s_2 \circ D(q_1q_2)\big) \big(f_{\alpha}(q_1,q_2)\big), f_{\alpha}(q_1,q_2)\Big).\] 

The cocycle $c : Q \times Q \times Q \rightarrow M^N$ of the crossed extension is now defined by \[{}^{\widetilde{s} \circ D\overline{s}(q_1)} f(q_2,q_3) f(q_1,q_2q_3) = i_0c(q_1,q_2,q_3) f(q_1,q_2) f(q_1q_2,q_3),\] where $i_0 : M^N \rightarrow M \rtimes N$ is the obvious embedding (for the correspondence, see \cite[IV.5]{brow82-1}). Using the fact that $f_{\alpha}$ satisfies a non-abelian ``cocycle condition'' (for the definition, see \cite[IV.6]{brow82-1}), and that $s_2 \circ D$ takes values in $\mbox{Der}(N,M)$, together with the definition of $F'$ and the definition of the $G$-action on $\mbox{Der}(N,M)$, we can compute that \begin{equation} \label{third_cocycle}c(q_1,q_2,q_3)=c'(q_1,q_2,q_3) + \big({}^{\alpha(q_1q_2)}s_2D(q_3)\big)(f_{\alpha}(q_1,q_2)),\end{equation} with 
\begin{eqnarray*}
c'(q_1,q_2,q_3 )&= & \alpha(q_1) \cdot F'(q_2,q_3)  -F'(q_1q_2,q_3) \\ && 
\ \ \ +F'(q_1,q_2q_3) - F'(q_1,q_2),
\end{eqnarray*}
which resembles a coboundary expression. The last term in (\ref{third_cocycle}) is given by taking the derivation $s_2 D(q_3) \in \mbox{Der}(N,M)$, letting $\alpha(q_1q_2)$ act on it by the usual $G$-action, and evaluating the resulting derivation in $f_{\alpha}(q_1,q_2) \in N$.

\section{Example: The Heisenberg groups}\label{Heisenberg}

We illustrate the 7-term exact sequence for the Heisenberg groups $G_k$ with trivial coefficient module $\Z$. The group $G_k$ has presentation
\[G_k=\left\langle a, \ b, \ c \ | \ [a,b]=c^k, \ [a,c]=[b,c]=1\right\rangle.\] 
Set $N=Z(G_k)=\langle c \rangle \cong \Z$, so $Q = G_k / N = \langle \overline{b},  \overline{c} \rangle \cong \Z^2$, where $\overline{b}$ and $\overline{c}$ are the images of $b$ and $c$ under the projection map. We obtain the group extension
\[\underline{\epsilon}_k : \xymatrix{0 \ar[r] & N \ar[r] & G_k \ar[r] & Q \ar[r] & 0.}\]

To give an explicit description of the exact sequence, it is important to understand the cohomology groups that appear. We also want to know which cocycles we can choose as group generators. 
It is known that $H^1(Q,\Z)$ is the free abelian group on generators $[f_a]$ and $[f_b]$, with $f_a(\overline{a})=1=f_b(\overline{b})$ and $f_a(\overline{b})=0=f_b(\overline{a})$. Furthermore, there is an isomorphism $H^1(G_k, \Z) \cong \Z^2$, and we can choose generators  $[\widetilde{f}_a]$ and $[\widetilde{f}_b]$ for $H^1(G_k, \Z)$, with $\widetilde{f}_a(a)=1=\widetilde{f}_b(b)$, $\widetilde{f}_a(b)=0=\widetilde{f}_b(a)$ and $\widetilde{f}_a(c)=0=\widetilde{f}_b(c)$. 
Since $N=Z(G_k)$ and $\Z$ is the trivial module, $H^1(N,\Z)$ is invariant under the action of $Q$, so $H^1(N,\Z)^Q\cong \Z$ with generator $[f]$, with $f(c)=1$.   
It is known that the cohomology group $H^2(Q,\Z)$ is isomorphic to $\Z$, with generator $[\underline{\epsilon}_1]$.
We also have the relation $H^2(G_k, \Z) \cong \Z^2 \oplus \Z_k$, and Hartl gives an explicit isomorphism $D : \Z^2 \oplus \Z_k \rightarrow H^2(G_k,\Z)$ in the example in \cite{hart96-1} on p.~410. The generators of $H^2(G_k,\Z)$ are $D(1,0,0)$, $D(0,1,0)$ and $D(0,0,1)$.
Since $H^2(N,\Z)=0$, $H^2(G_k,\Z)_1=H^2(G_k, \Z)$. Last of all, $H^1(Q,H^1(N,\Z))$ is a free abelian group on the generators $[f_1]$ and $[f_2]$, with $f_1(\overline{a})=f=f_2(\overline{b})$ and $f_1(\overline{b})=0=f_2(\overline{a})$.

The exact sequence is now of the form
\[\xymatrix{0 \ar[r] & \Z^2 \ar[r]^-{\infl} & \Z^2 \ar[r]^-{\res} & \Z \ar[r]^-{\tr} & \Z \ar[r]^-{\infl} & \Z^2 \oplus \Z_k \ar[r]^-{\rho} & \Z^2 \ar[r] & 0.}\]
One can easily see that $\infl$ is the identical map after identification of the groups with $\Z^2$. 
It is clear that any homomorphism $f : G_k \rightarrow \Z$ takes $c$ to zero, so $\res \equiv 0$. 
From section \ref{Evens_dinges} we know that $\tr[f]=-[\underline{\epsilon}_k]$, since $f : N \rightarrow \Z$ is an isomorphism. It is easy to see that $[\underline{\epsilon}_k]=k [\underline{\epsilon}_1]$, so $\tr(1)=-k$ after identification. 

For the description of the next maps, we make use of the isomorphism in \cite{hart96-1}.
Computing a cocycle for $\underline{\epsilon}_1$, one easily verifies that $\infl[\underline{\epsilon}_1]$ can be represented by a cocycle sending a couple $(a^k b^l c^m , a^{k'} b^{l'} c^{m'})$ to $c^{k'l}$. It follows that $\infl[\underline{\epsilon}_1]$ corresponds to $D(0,0,1)$. This means that $\infl$ is the composition of the projection $p_k: \Z \rightarrow \Z_k$ and the embedding $i_l$ of the last factor in $\Z^2 \oplus \Z_k$. 

We claim that the last map is the projection $\Z^2 \oplus \Z_k \rightarrow \Z^2$. Indeed, using the formulas in \cite{hart96-1}, one proves that $D(1,0,0)$ maps to the class of $g_1 : Q \rightarrow H^1(N,\Z)$, where $g_1(\overline{a}^{\alpha}\overline{b}^{\beta})$ sends $c$ to $\alpha$. This means that $g_1(\overline{a}) = f$ and $g_1(\overline{b})=0$, so $g_1=f_1$. Analogously, $D(0,1,0)$ is sent to the class of $g_2$, with $g_2(\overline{a})=0$ and $g_2(\overline{b})=f$, so $g_2=f_2$. The element $D(0,0,1)$ is sent to zero. Now it is clear that $\rho : \Z^2 \oplus \Z_k \rightarrow \Z^2$ is the projection $p_{12}$ on the first two factors. 

Therefore the exact sequence equals
\[\xymatrix{0 \ar[r] & \Z^2 \ar[r]^-{id} & \Z^2 \ar[r]^{0} & \Z \ar[r]^-{-k} & \Z \ar[r]^-{i_l \circ p_k} & \Z^2 \oplus \Z_k \ar[r]^-{p_{12}} & \Z^2 \ar[r] & 0.}\] 

\section{Splitting sequence of groups with $N$-invariant module} \label{split_extension}
We consider the special case in which the sequence 
\[\xymatrix{1 \ar[r] & N \ar[r] & G \ar[r] & Q \ar[r] & 1}\] splits and the module $M$ is $N$-invariant, so $M^N = M$. Fix a splitting $\alpha : Q \rightarrow G$.

We have seen in section \ref{Evens_dinges} that in this case, $\tr$ will be the zero map. Moreover, we can show that the map $\lambda$ will also be trivial, since we can find a section $\sigma : \Out_G(\underline{e}_0) \rightarrow \Aut_G(\underline{e}_0)$ of $\Pi$, that is a homomorphism. 

Take $\Pi(h,g) \in \Out_G(\underline{e}_0)$ and set $g'=\alpha(\pi(g))$. Now there exists a map $h'$ such that $(h',g') \in \Aut_G(\underline{e}_0)$ and $\Pi(h',g')=\Pi(h,g)$.
One can show that $h'$ is the unique map with these properties. 

Define $\widetilde{\alpha} : \Out_G(\underline{e}_0) \rightarrow \Aut_G(\underline{e}_0)$, mapping $\Pi(h,g)$ to $(h',g')$ as in the above. It is easy to check that this is a homomorphism. 
Observe that this means that the crossed extension
\[\xymatrix{0 \ar[r] & M^N \ar[r] & M \rtimes N \ar[r]^{\beta} & \Aut_G(\underline{e}_0) \ar[r]^{\Pi} & \Out_G(\underline{e}_0) \ar[r] & 1}\] is equivalent to zero, so $\lambda \equiv 0$.

These results are compatible with results for the sequence induced by the spectral sequence. 

\bibliography{ref}
\bibliographystyle{ref}

\end{document}